\documentclass[11pt]{amsart}
\usepackage{amsmath,amssymb,bm,a4wide}
\newtheorem{theorem}{Theorem}[section]
\newtheorem{lemma}[theorem]{Lemma}
\newtheorem{proposition}[theorem]{Proposition}
\newtheorem{corollary}[theorem]{Corollary}
\newtheorem{claim}{Claim}[section]
\theoremstyle{remark}
\newtheorem{remark}{Remark}[section]
\numberwithin{equation}{section}
\newcommand{\R}{\mathbb{R}}

\newcommand{\N}{\mathbb{N}}
\newcommand{\Z}{\mathbb{Z}}
\newcommand{\T}{\mathbb{T}}
\newcommand{\la}{\langle}
\newcommand{\ra}{\rangle}
\newcommand{\pd}{\partial}

\newcommand{\mgamma}{\bm{\gamma}}
\newcommand{\mk}{\bm{k}}
\newcommand{\eps}{\varepsilon}
\newcommand{\wR}{\widetilde{R}}
\newcommand{\pai}{\psi_{a,i}}

\newcommand{\tpai}{\tilde{\psi}_{a,i}}
\newcommand{\dc}{\delta c}
\newcommand{\dU}{\delta U}
\newcommand{\du}{\delta u}
\newcommand{\dv}{\delta v}
\newcommand{\dw}{\delta w}
\newcommand{\dg}{\delta\gamma}

\DeclareMathOperator{\sech}{sech}

\DeclareMathOperator{\diag}{diag}

\DeclareMathOperator{\spann}{span}

\begin{document}
\title{N-soliton states of the FPU lattices}
\author{Tetsu Mizumachi}
\address{Faculty of Mathematics, Kyushu University,
Fukuoka 819-0395 Japan}
\email{mizumati@math.kyushu-u.ac.jp}
\begin{abstract}
In this paper, we prove existence and uniqueness of solutions to
the Fermi Pasta Ulam lattice equation that converge to a sum of 
co-propagating $N$ solitary waves as $t\to\infty$ using linear stability
property of multi-soliton like solutions in an exponentially weighted space
proved by Mizumachi \cite{Mi2}. Counter-propagating
two soliton states have been studied by [Hoffman and Wayne,
Asymptotic two-soliton solutions in the Fermi-Pasta-Ulam model,
J. Dynam. Differential Equations  21  (2009), 343--351].
\end{abstract}
\keywords{FPU lattices, asymptotic $N$-solitons,
infinite dimensional Hamiltonian system}
\subjclass[2000]{37K40,37K60}
\maketitle
\section{Introduction}
\label{sec:intro}
In this paper, we prove existence and uniqueness of $N$-soliton like
solutions to FPU lattices
\begin{equation}
  \label{eq:lattice}
\ddot{q}(t,n)=V'(q(t,n+1)-q(t,n))-V'(q(t,n)-q(t,n-1))
\quad\text{for $\in\R$ and $n\in\Z$}  
\end{equation}
which models an infinite chain of anharmonic oscillators with
nearest-neighbor interaction potential $V$.
Making use of the change of variables
$$p(t,n)=\dot{q}(t,n)\quad r(t,n)=q(t,n+1)-q(t,n),$$
we can translate \eqref{eq:lattice} into a Hamiltonian system
\begin{equation}
  \label{eq:FPU}
  \frac{du}{dt}=JH'(u),
\end{equation}
where $J=\begin{pmatrix} 0 & e^\pd-1 \\ 1-e^{-\pd} & 0
\end{pmatrix}$, $e^{\pm\pd}$ are the shift operators defined
by $(e^{\pm\pd})f(n)=f(n\pm1)$ and
\begin{gather*}
H(u(t))=\sum_{n\in\Z}\left(\frac12p(t,n)^2+V(r(t,n))\right)
\quad\text{(Hamiltonian).}
\end{gather*}
\par
FPU lattices have solitary waves due to a balance between nonlinearity and
dispersion caused by discreteness of a spatial variable.
Friesecke and Wattis \cite{FW} prove that \eqref{eq:FPU}
has two parameter family of solitary wave solutions
$\{u_c(n-ct-\gamma) : c\in(-\infty,-1)\cup(1,\infty),\,\gamma\in\R\}$,
where $u_c=\begin{pmatrix}  r_c\\ p_c\end{pmatrix}$ is a solution of
\begin{equation}
  \label{eq:boundst}
c\pd_xu_c+JH'(u_c)=0.
\end{equation}
\par
In \cite{FP2,FP3,FP4}, Friesecke and Pego prove stability of 1-soliton
solutions of FPU lattices in an exponentially weighted space 
that are biased in the direction of motion of solitary waves. 
They utilize the fact the main solitary wave of a solution to \eqref{eq:FPU}
outruns from the other part of solutions.  Mizumachi \cite{Mi1} has shown that
1-solitons of \eqref{eq:FPU} are stable to perturbation in the energy class.
\par
If $V(r)=a(e^{br}-1-br)$ and $ab>0$, then  \eqref{eq:FPU} is an integrable
system so-called Toda lattice  and has explicit $N$-soliton solutions
(see \cite{To}).  That is, Toda lattice has a family of solutions which
converge to a sum of N solitary waves as $t\to\pm\infty$ and
solitary waves do not change their shape by collision.
However, it is not obvious whether multi-soliton like solutions can exist
stably in the non-integrable case.
\par
Existence and uniqueness of asymptotic $N$-soliton states of generalized 
KdV equations has been proved by Martel \cite{Ma} (see also \cite{MM1}).
His idea is to use stability theory of multi-solitons by Martel-Merle-Tsai
\cite{MMT} and monotonicity properties of localized  $L^2$ norms to ensure
uniqueness of the asymptotic $N$-soliton states. 
Recently, Martel and Merle \cite{MM2,MM3} prove that nonlinear interaction
between solitary waves causes defect of solitary waves in a setting different
from nearly integrable cases (see e.g. Hiraoka and Kodama \cite{HK}).
\par
One of the difference between FPU lattices and the KdV equation is that a
solitary wave of FPU lattices cannot be characterized as a critical point
of a conserved quantity due to the lack of infinitesimal invariance of the
spatial variable. Developing the method of \cite{FP1,FP2,FP3,FP4},
Hoffman and Wayne \cite{Hoff-Way} studied stability and head-on collision of
$2$-soliton states  waves propagating to the opposite direction.
They also prove the existence of solutions that converge to a sum of
counter-propagating solitary waves (see \cite{Hoff-Way2}).
If solitary waves move to the same direction,
the interaction through their tales are effective for a longer period
and we cannot derive strong linear stability of co-propagating multi-solitons
from \cite{FP3,FP4} as was done by Hoffman and Wayne
\cite{Hoff-Way, Hoff-Way2}. Recently, Mizumachi \cite{Mi2} has proved
stability of co-propagating $N$-soliton like solutions.

In this paper, we prove existence and uniqueness of solutions which converge
to a sum of $N$ solitary waves moving to the same direction
replacing the variational argument of Martel \cite{Ma} by the strong linear
stability property of multi-solitons in exponentially 
weighted spaces proved in \cite{Mi2}.
\par
Now, let us introduce our result.
\begin{theorem}
  \label{thm:1}
Suppose 
\begin{equation}
  \label{eq:H1}
V\in C^\infty(\R;\R),\quad V(0)=V'(0)=0, \quad V''(0)=1, \quad
 V'''(0)=\tfrac16.
\tag{H1}
\end{equation}
Let $k_{N}>\cdots>k_1>0$ and $c_{i,+}=1+\frac{(k_i\eps)^2}{6}$ for $1\le i\le N$.
There exists a positive number $\eps_0$ such that for any 
$\eps\in(0,\eps_0)$ and $\gamma_{i,+}\in\R$ $(1\le i\le N)$,
there exists a unique solution $u(t)$ of \eqref{eq:FPU} satisfying
\begin{equation}
  \label{eq:convergence}
\lim_{t\to\infty}\left\|u(\cdot,t)
-\sum_{i=1}^Nu_{c_{i,+}}(\cdot-c_{i,+}t-\gamma_{i,+})\right\|_{l^2}=0.
\end{equation}
\par
Furthermore, there exists a $\beta>0$ such that
\begin{equation}
  \label{eq:expconvergence}
\left\|u(\cdot,t)-\sum_{i=1}^Nu_{c_{i,+}}(\cdot-c_{i,+}t-\gamma_{i,+})
\right\|_{l^2} =O(\eps^{\frac32}e^{-\beta\eps h(t)})
\quad\text{as $t\to \infty$,}  
\end{equation}
where $h(t)=\min\{(c_{j,+}-c_{j-1,+})t+\gamma_{j,+}-\gamma_{j-1,+}
\,|\,2\le j\le N\}$.
\end{theorem}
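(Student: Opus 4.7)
The plan is to adapt the compactness method of Martel \cite{Ma} to the FPU setting, substituting the weighted linear stability of co-propagating $N$-solitons from \cite{Mi2} for the variational arguments available for (generalized) KdV. First, pick a sequence $T_n \uparrow \infty$, and by global well-posedness of \eqref{eq:FPU} in $l^2$ let $u_n$ be the solution defined on $\R$ with terminal datum $u_n(T_n) = R(T_n)$, where
\[
R(t,x) := \sum_{i=1}^N u_{c_{i,+}}(x - c_{i,+}t - \gamma_{i,+}).
\]
The target is the uniform-in-$n$ estimate $\|u_n(t) - R(t)\|_{l^2} \lesssim \eps^{3/2} e^{-\beta \eps h(t)}$ valid on $[T_0, T_n]$ for some $T_0$ independent of $n$; granted this bound, the existence half of Theorem~\ref{thm:1} follows by a Bolzano--Weierstrass extraction at $t = T_0$ plus continuity of the FPU flow, and \eqref{eq:expconvergence} is inherited by the limit.

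On the maximal interval $[T_n^\ast, T_n]$ where $u_n$ remains close to the $N$-soliton manifold, the implicit function theorem furnishes a modulated decomposition
\[
u_n(t, x) = \sum_{i=1}^N u_{c_i(t)}(x - \gamma_i(t)) + w_n(t, x),
\]
where $c_i(t), \gamma_i(t)$ are pinned down by the symplectic orthogonality conditions associated with the generalized kernel of the linearization of \eqref{eq:boundst} at each soliton. The modulation equations then yield $|\dot c_i(t)| + |\dot\gamma_i(t) - c_i(t)| \lesssim \|w_n(t)\|_{l^2_a} + \eps^{3/2} e^{-\beta \eps h(t)}$, the final term arising because a sum of solitary waves fails to solve \eqref{eq:FPU} only through their exponentially small tail overlaps. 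Coupling this with the weighted $l^2_a$ decay estimate for the linearized flow of $w_n$ from \cite{Mi2}, and initializing a bootstrap at $t = T_n$ with $w_n(T_n) \equiv 0$, yields a uniform bound $\|w_n(t)\|_{l^2_a} + \sum_i(|c_i(t) - c_{i,+}| + |\gamma_i(t) - \gamma_{i,+} - c_{i,+}(t-T_n)|) \lesssim \eps^{3/2} e^{-\beta\eps h(t)}$ on $[T_0, T_n]$. The full $l^2$ bound in \eqref{eq:expconvergence}, which also controls the part of $w_n$ lagging behind the solitons where the weighted norm is blind, is then recovered from $w_n(T_n) = 0$ together with conservation of the Hamiltonian $H$ and the momentum.

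The limiting solution $u_\ast$ is produced by extracting a subsequence of $\{u_n(T_0)\}$ convergent in $l^2$, evolving by \eqref{eq:FPU}, and passing the uniform bounds to the limit. For uniqueness, given two solutions $u^{(1)}, u^{(2)}$ satisfying \eqref{eq:convergence}, each admits the modulated decomposition for all sufficiently large $t$; applying the weighted stability of \cite{Mi2} to $u^{(1)} - u^{(2)}$ forward in time, using that both differences $u^{(j)} - R$ tend to $0$ in $l^2$ as $t \to \infty$, closes a Gr\"onwall-type inequality that forces $u^{(1)}(T_0) = u^{(2)}(T_0)$.

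The main obstacle will be executing the bootstrap in the second step. The weighted stability of \cite{Mi2} effectively sees only the component of $w_n$ lying to the right of (or comoving with) the solitons, whereas \eqref{eq:expconvergence} demands the full $l^2$ rate. Transporting the terminal vanishing $w_n(T_n) = 0$ backwards in time through $H$-conservation without losing powers of $\eps$, while simultaneously matching the right-hand side of the modulation equations to the assumed bootstrap rate $O(\eps^{3/2} e^{-\beta\eps h(t)})$, requires careful accounting of the exponential tails of adjacent solitons evaluated at each other's centers; the interaction between the factor $\eps^{3/2}$ (scale of the solitary wave profile in the small-amplitude regime) and the rate $\beta\eps$ (slowest spatial decay of a soliton tail) governs the final exponent in \eqref{eq:expconvergence}.
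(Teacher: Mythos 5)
Your first step (backward solutions with terminal data equal to the sum of solitary waves, modulation with symplectic orthogonality, bootstrap combining the weighted linear stability of \cite{Mi2} with Hamiltonian conservation and localized virial estimates) is essentially the paper's Section~\ref{subsec:2.1}, and the uniform bound you aim at is Proposition~\ref{prop:vNupb}. The genuine gap is in how you produce the limit: you propose ``a Bolzano--Weierstrass extraction at $t=T_0$'' of a subsequence of $\{u_n(T_0)\}$ \emph{convergent in $l^2$}. Bounded sets in $l^2(\Z)$ are not precompact, so uniform boundedness of $u_n(T_0)-R(T_0)$ gives at best weak convergence, and weak limits are not enough to pass \eqref{eq:convergence} or the rate \eqref{eq:expconvergence} to the limit. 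The usual remedy --- compactness in an exponentially weighted space, as in Hoffman--Wayne's treatment of counter-propagating waves --- is exactly what the paper points out fails here: for co-propagating solitons the interaction tails are not uniformly localized, so $\{u_n\}$ cannot be expected to be bounded in $e^{-a|n|}l^2$ and no tightness is available. The paper replaces compactness by proving that $\{u^n\}$ is a \emph{Cauchy sequence} in $l^2$: it estimates the differences $\dc_k^{m,n}$, $\dg_k^{m,n}$, $\dv_k^{m,n}$ directly (Lemmas~\ref{lem:cauchyxc}, \ref{lem:vkxkdif}, \ref{lem:cauchy-en}), using monotonicity of localized norms of $u^m-u^n$ together with the backward weighted stability estimate, closing the system of inequalities to get \eqref{eq:1f1}. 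Without such a difference estimate (or some other mechanism yielding strong convergence), your construction of $u_\ast$ does not go through, and this is the core of the existence proof rather than a technical footnote.

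A secondary gap is in the uniqueness argument. Two solutions $u^{(1)},u^{(2)}$ satisfying \eqref{eq:convergence} never agree at a finite terminal time, so a Gr\"onwall-type comparison of $u^{(1)}-u^{(2)}$ cannot be closed from the qualitative decay in \eqref{eq:convergence} alone: you need a quantitative rate first. The paper supplies this through Proposition~\ref{prop:expconv}, which upgrades \eqref{eq:convergence} to the exponential rate \eqref{eq:expconvergence} for \emph{any} such solution (this step uses a separate decomposition with the free-evolution piece $v_0^n$, Arzel\`a--Ascoli on the modulation parameters, and the localized estimates of Lemmas~\ref{lem:virial-0u}--\ref{lem:locenergy-ku}); only then can both solutions be decomposed on $[T,n]$, compared by the same Cauchy-type machinery, and the difference sent to zero as $n\to\infty$. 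Also note that the weighted stability of Lemma~\ref{lem:linearstability} is used \emph{backward} in time (the weight is decreasing precisely because the problem is solved from $t=n$ backwards), so ``applying the weighted stability forward in time'' as you describe would have to be reworked. Your closing remark about the interplay between the $\eps^{3/2}$ amplitude scale and the $\beta\eps$ tail decay correctly identifies where the exponent in \eqref{eq:expconvergence} comes from, but the two structural points above are what must be repaired.
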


 Our plan of the present paper is as follows:
In Section \ref{subsec:2.1}, we will show uniform boundedness of a
sequence $\{u_n(t)\}_{n\in\N}$, where 
$u_n(t)$ is a solution of \eqref{eq:FPU} which equals to a sum of $N$
solitary waves at $t=n$. Since the interaction between co-propagating solitary
waves are not strongly localized in space as counter-propagating $2$-solitons,
co-propagating solitary waves cannot be expected to be bounded in  $e^{-a|n|}l^2$
and the compactness argument of \cite{Hoff-Way2} does not work.
In Section \ref{sec:existence}, we will use monotonicity of localized norms
of $u_m(t)-u_n(t)$ to prove $\{u_n\}_{n=1}^\infty$ is a Cauchy sequence in $l^2$.
Using estimates obtained in Section \ref{sec:existence},
we prove uniqueness of solutions that converges to an 
$N$-soliton state in Section \ref{sec:uniqueness}. 
\par
In this paper, we will use the following properties of solitary wave
solutions proved by \cite{FP1}.
\begin{itemize}
\item[(P1)]
Let $c_*>1$ be a constant sufficiently close to $1$.
For any $c\in(1,c_*]$, there exists a unique single hump solution $u_c$
of \eqref{eq:boundst} in $l^2$ up to translation in $x$.
\item[(P2)] There exists an open interval $I$ such that 
$V''(r)>0$ for every $r\in I$ and that
$\overline{\{r_c(x):x\in\R\}}\subset I$ for every $c\in(1,1+c_*]$.
\item[(P3)] 
The solitary wave energy $H(u_c)$ satisfies
$dH(u_c)/dc\ne 0$ for $c\in(1,c_*]$.
\item[(P4)]
As $c$ tends to $1$, a shape of solitary wave solution becomes similar to
that of a KdV $1$-soliton. .More precisely, 
\begin{align*}
\sum_{j=0}^2\eps^j\left\|\pd_\varepsilon^j\left(
\eps^{-2}r_c\left(\frac{\cdot}{\eps}\right)-\sech^2x\right)
\right\|_{H^5(\R;e^{2a|x|}dx)}=O(\eps^{2})
\quad\text{for $a\in[0,2)$.}
\end{align*}
\end{itemize}
\par
Finally, let us introduce several notations.
Let
\begin{align*}
& \|u\|_{X_k^n(t)}=\| e^{-k_1\eps(\cdot-x_k^n(t))}u\|_{l^2},
\quad \|u\|_{X_k^n(t)^*}=\| e^{k_1\eps(\cdot-x_k^n(t))}u\|_{l^2},
\\ &
\|u\|_{W_k^n(t)}=\sum_{i=1}^k\|e^{-k_1\eps|\cdot-x_i^n(t)|}u\|_{l^2},
\quad \|u\|_{W_k^n(t)^*}=\min_{i=1}^k \|e^{k_1\eps|\cdot-x_i^n(t)|}u\|_{l^2},
\\ &
\|u\|_{\widetilde{W}_k^n(t)}=\sum_{i=1}^k\|e^{-k_1\eps|\cdot-x_i^n(t)|}u\|_{l^1},
\quad \|u\|_{\widetilde{W}_k^n(t)^*}
=\min_{i=1}^k \|e^{k_1\eps|\cdot-x_i^n(t)|}u\|_{l^\infty}.
\end{align*}
We abbreviate $W_N^n(t)$ as $W^n(t)$.
For Banach spaces $X$ and $Y$, we denote by $B(X,Y)$ the space of 
all linear continuous operators from $X$ to $Y$ and abbreviate
$B(X,X)$ as $B(X)$.
We use $a\lesssim b$ and $a=O(b)$ to mean that there exists a positive constant
such that $a\le Cb$.

\section{Approximation of N-soliton like states}
\label{subsec:2.1}
Let $x_{i,+}(t)=c_{i,+}t+\gamma_{i,+}$ and let
$u^n(t)$ be a solution to
\begin{equation}
  \label{eq:un}
  \left\{    \begin{aligned}
& \pd_tu^n=JH'(u^n),\\
& u^n(n)=\sum_{i=1}^Nu_{c_{i,+}}(\cdot-x_{i,+}(n)).
    \end{aligned}\right.
\end{equation}
In this section, we will prove uniform boundedness of $u^n(t)$
for $t\in [T,n]$.
To apply the strong linear stability property in exponentially weighted
spaces (Lemma \ref{lem:linearstability}), we will decompose $u^n$ into a sum of 
solitary waves and remainder parts following the idea of \cite{Mi2}.
\par
In view of the proof of \cite[Proposition 1]{Mi1},
we have $u^n(t)\in l^2_a\cap l^2_{-a}$ for any 
$0\le a<2\kappa(c_1)$, where $\kappa(c_1)$ is a solution to
$\frac{\sinh\kappa}{\kappa}=c_1$.
Let $P_k^n$ and $Q_k^n$ be projections defined on $l^2_{-a}$
$(0<a<2\kappa(c_1^n))$ such that
$P_k^n+Q_k^n=I$ and
\begin{align*}
\operatorname{Range}P_k^n &=\spann
\{\pd_xu_{c_i^n}(\cdot-x_i^n), \pd_cu_{c_i^n}(\cdot-x_i^n)
\,|\, 1\le i\le k\},\\
{}^\perp\operatorname{Range}Q_k^n &=\spann\{J^{-1}\pd_xu_{c_i^n}(\cdot-x_i^n),
J^{-1}\pd_cu_{c_i^n}(\cdot-x_i^n)\,|\,1\le i\le k\},
\end{align*}
where $J$ is a bounded inverse of $J$ on $l^2_a$ $(a>0)$. That is,
\begin{equation}\label{eq:J-1}
J^{-1}= -\begin{pmatrix} 0 & \sum_{k=1}^\infty e^{k\pd}
\\ \sum_{k=0}^\infty e^{k\pd} & 0 \end{pmatrix}
\end{equation}
Let $v_k^n(t)$ $(1\le k\le N)$ be a solution of 
  \begin{equation}
    \label{eq:vk}
    \left\{\begin{aligned}
& \pd_tv_k^n=JH''(U_k^n)v_k^n+l_k^n+Q_k^nJR_k^n,\\
& v_k^n(n)=0,
      \end{aligned}\right.
  \end{equation}
where $w_0^n=0$, $w_k^n=\sum_{i=1}^kv_i^n$,
\begin{align}
\label{eq:Ukdef}
U_0^n(t)=&0,\quad U_k^n(t)=\sum_{i=1}^k u_{c_i^n(t)}(\cdot-x_i^n(t)),\\
\label{eq:Rkdef}
R_k^n=&  H'(U_k^n+w_k^n)-H'(U_{k-1}^n+w_{k-1}^n)-H'(u_{c_k^n}(\cdot-x_k^n))
-H''(U_k^n)v_k^n,
\\
\label{eq:lkdef}
l_k^n=& \sum_{i=1}^k\left(\alpha_{ik}^n\pd_cu_{c_i^n}(\cdot-x_i^n)
+\beta_{ik}^n\pd_xu_{c_i^n}(\cdot-x_i^n)\right).
\end{align}
We will choose $\alpha_{ik}^n(t)$, $\beta_{ik}^n(t)$, $x_i^n(t)$ and $c_i^n(t)$
so that $v_k^n$ satisfies the symplectic orthogonality condition
\eqref{eq:orthv2k} and that $U_N^n+w_N^n$ is a solution of \eqref{eq:un}.
\par

Let
\begin{gather*}
\mathcal{A}_k^n=\begin{pmatrix}\mathcal{A}_{i,j}^n
\end{pmatrix}_{\substack{i=1,\cdots, k\downarrow\\ j=1,\cdots, k\rightarrow}},
\quad 
\widetilde{\mathcal{A}}_k^n=\begin{pmatrix}\mathcal{A}_{i,j}^n
\end{pmatrix}_{\substack{i=1,\cdots, N\downarrow\\ j=1,\cdots, k\rightarrow}},\quad
\delta\mathcal{A}_k^n=\diag(\delta\mathcal{A}_{ik}^n)_{1\le i\le k},
\\
\mathcal{A}_{i,j}^n=
 \begin{pmatrix}
\eps^{-1}\la \pd_cu_{c_j^n}(\cdot-x_j^n),
J^{-1}\pd_xu_{c_i^n}(\cdot-x_i^n)\ra 
& \eps^{-4}\la \pd_xu_{c_j^n}(\cdot-x_j^n),
J^{-1}\pd_xu_{c_i}(\cdot-x_i^n)\ra
 \\  \eps^2\la \pd_cu_{c_j^n}(\cdot-x_j^n),
J^{-1}\pd_cu_{c_i^n}(\cdot-x_i^n)\ra
 & \eps^{-1}\la \pd_xu_{c_j^n}(\cdot-x_j^n),
J^{-1}\pd_cu_{c_i}(\cdot-x_i^n)\ra
 \end{pmatrix},
\\
\delta\mathcal{A}_{ik}^n=
\begin{pmatrix} \eps^{-1}\la v_k^n ,
J^{-1}\pd_c\pd_xu_{c_i^n}(\cdot-x_i^n)\ra
& \eps^{-4}\la v_k^n, J^{-1}\pd_x^2u_{c_i^n}(\cdot-x_i^n)\ra \\ 
\eps^2\la v_k^n, J^{-1}\pd_c^2u_{c_i}(\cdot-x_i^n)\ra
& \eps^{-1}\la v_k^n, J^{-1}\pd_c\pd_xu_{c_i^n}(\cdot-x_i^n)\ra
\end{pmatrix}.
\end{gather*}

\begin{remark}
We  will use exponentially weighted spaces $X_k^n(t)^*$
to prove Theorem \ref{thm:1}.
The weight function of $X_k^n(t)^*$ is monotone decreasing whereas the weight
function of $X_k(t)$ used in \cite{Mi2} is monotone increasing. 
This is because we will solve \eqref{eq:FPU} backward in time.
The order of the decomposition of nonlinearity of \eqref{eq:FPU}
in \eqref{eq:vk} is the opposite from that in \cite[(2.6)]{Mi2}.
\end{remark}

Let
\begin{align*}
& \delta R_{1k}^n(i)=
\begin{pmatrix}\eps^{-4}\la R_k^n,\pd_xu_{c_i^n}(\cdot-x_i^n)\ra
\\ \eps^{-1}\la R_k^n, \pd_cu_{c_i^n}(\cdot-x_i^n)\ra\end{pmatrix},
\\
&  \delta R_{2k}^n(i)=
\begin{pmatrix} \eps^{-4}\la v_k^n,
(H''(U_k^n)-H''(u_{c_i^n}(\cdot-x_i^n)))\pd_xu_{c_i^n}(\cdot-x_i^n)\ra
\\ \eps^{-1}\la v_k^n,
(H''(U_k^n)-H''(u_{c_i^n}(\cdot-x_i^n)))\pd_cu_{c_i^n}(\cdot-x_i^n)\ra
\end{pmatrix},
\\ & 
\delta R_{1k}^n=
\begin{pmatrix}\delta R_{1k}^n(i)\end{pmatrix}_{i=1,\cdots,k\downarrow},\quad
\delta R_{2k}^n=
\begin{pmatrix}\delta R_{2k}^n(i)\end{pmatrix}_{i=1,\cdots,k\downarrow},
\\ &
\delta\widetilde{\mathcal{A}}_k^n
=\widetilde{\mathcal{A}}_k^n(\mathcal{A}_k^n)^{-1}\delta\mathcal{A}_k^n,
\quad 
\delta\wR_{1k}^n=\widetilde{\mathcal{A}}_k^n(\mathcal{A}_k^n)^{-1}\delta R_{1k},
\quad
\delta\wR_{2k}^n=\widetilde{\mathcal{A}}_k^n(\mathcal{A}_k^n)^{-1}\delta R_{2k}.
\end{align*}
\begin{lemma}
  \label{lem:modulation}
Let $v_i^n$ $(1\le i\le N)$ be solutions of \eqref{eq:vk}. Suppose
\begin{gather}
\label{eq:xcinit}
x_i^n(n)=x_{i,+}(n),\quad c_i^n(n)=c_{i,+},\\
  \label{eq:modeq1}
\left(\mathcal{A}_N^n-\sum_{k=1}^N\delta\widetilde{\mathcal{A}}_k^n\right)
\begin{pmatrix}\eps^{-3}\dot{c}_i^n\\ c_i^n-\dot{x}_i^n
\end{pmatrix}_{i=1,\cdots,N\downarrow}
+\sum_{k=1}^N(\delta\wR_{1k}^n+\delta\wR_{2k}^n)=0,
\end{gather}
and that $\alpha_{ij}^n$ and $\beta_{ij}^n$ ($1\le i\le k\le N$) are
$C^1$-functions satisfying
\begin{equation}
  \label{eq:orthvk3}
\begin{split}
& \mathcal{A}_k^n\begin{pmatrix}
\eps^{-3}\alpha_{ik}^n(t)\\ \beta_{ik}^n(t) \end{pmatrix}_{i=1,\cdots,k\downarrow}
\\ =& \delta R_{2k}^n +\delta\mathcal{A}_k^nE_{k,N}
(\mathcal{A}_N^n-\sum_{k=1}^N\delta\widetilde{\mathcal{A}}_k^n)^{-1}
\sum_{k=1}^N(\delta\wR_{1k}^n+\delta\wR_{2k}^n),
\end{split}
\end{equation}
where $E_{k,N}=(\delta_{ij})_{\substack{i=1,\cdots,2k\downarrow\\j=1,\cdots,2N\rightarrow}}$.
Then $v_k^n$ $(1\le k\le N)$ satisfies
\begin{equation}
\label{eq:orthv2k}
\la v_k^n(t),J^{-1}\pd_xu_{c_i^n(t)}(\cdot-x_i^n(t))\ra
=\la v_k^n(t),J^{-1}\pd_cu_{c_i^n(t)}(\cdot-x_i^n(t))\ra=0
\end{equation}
for $1\le i\le k$ and $t\in\R$.
Moreover, $U_N^n+w_N^n$ is a solution of \eqref{eq:un}.
\end{lemma}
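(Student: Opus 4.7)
The plan is to verify both assertions by direct differentiation, carrying out an induction on $k$ (since the matrices $\delta\mathcal{A}_k^n$ and $\delta R_{2k}^n$ involve only $v_j^n$ with $j\le k$). For the symplectic orthogonality I would introduce
$$\Phi_k^n(t):=\bigl(\la v_k^n(t), J^{-1}\pd_c u_{c_i^n(t)}(\cdot-x_i^n(t))\ra,\ \la v_k^n(t), J^{-1}\pd_x u_{c_i^n(t)}(\cdot-x_i^n(t))\ra\bigr)_{1\le i\le k},$$
which vanishes at $t=n$ because $v_k^n(n)=0$. Showing that, under \eqref{eq:modeq1} and \eqref{eq:orthvk3}, $\Phi_k^n$ satisfies a homogeneous linear ODE in $t$ then yields \eqref{eq:orthv2k} by uniqueness.

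To compute $\dot\Phi_k^n$ I would substitute $\pd_t v_k^n = JH''(U_k^n)v_k^n + l_k^n + Q_k^n J R_k^n$ from \eqref{eq:vk} together with $\frac{d}{dt}u_{c_i^n}(\cdot-x_i^n)=\dot c_i^n\pd_c u_{c_i^n}(\cdot-x_i^n)-\dot x_i^n\pd_x u_{c_i^n}(\cdot-x_i^n)$. Three structural facts then enter: (i) $J^*=-J$, so the $J^{-1}$ weight converts the $JH''(U_k^n)v_k^n$ pairing into pairings against $H''(U_k^n)$ acting on the test directions; (ii) differentiating the travelling-wave equation \eqref{eq:boundst} in $x$ and $c$ gives $JH''(u_c)\pd_x u_c=-c\pd_x^2 u_c$ and $JH''(u_c)\pd_c u_c=-\pd_x u_c-c\pd_x\pd_c u_c$, so that the $H''(u_{c_i^n}(\cdot-x_i^n))$-part of the pairings reproduces the entries of $\mathcal{A}_k^n$ applied to $(\eps^{-3}\dot c_i^n, c_i^n-\dot x_i^n)$; (iii) the residual $H''(U_k^n)-H''(u_{c_i^n}(\cdot-x_i^n))$ is absorbed by definition into $\delta R_{2k}^n$. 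Combined with $\la Q_k^n J R_k^n, J^{-1}\pd_{x,c}u_{c_i^n}(\cdot-x_i^n)\ra=0$, which holds by the defining property of ${}^\perp\operatorname{Range} Q_k^n$, the identity $\dot\Phi_k^n=0$ becomes precisely \eqref{eq:orthvk3} once \eqref{eq:modeq1} is used to determine $(\eps^{-3}\dot c_i^n, c_i^n-\dot x_i^n)$.

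For the second assertion, telescoping and \eqref{eq:Rkdef} give
$$JH'(U_N^n+w_N^n)=\sum_{k=1}^N\bigl[JH'(u_{c_k^n}(\cdot-x_k^n))+JH''(U_k^n)v_k^n+JR_k^n\bigr],$$
and substituting $JH'(u_{c_k^n}(\cdot-x_k^n))=-c_k^n\pd_x u_{c_k^n}(\cdot-x_k^n)$ from \eqref{eq:boundst}, together with $\pd_t U_N^n=\sum_i(\dot c_i^n\pd_c u_{c_i^n}(\cdot-x_i^n)-\dot x_i^n\pd_x u_{c_i^n}(\cdot-x_i^n))$ and $\pd_t w_N^n=\sum_k(JH''(U_k^n)v_k^n+l_k^n+Q_k^n JR_k^n)$ from \eqref{eq:vk}, reduces the required identity $\pd_t(U_N^n+w_N^n)=JH'(U_N^n+w_N^n)$ to
$$\sum_i\bigl[(c_i^n-\dot x_i^n)\pd_x u_{c_i^n}(\cdot-x_i^n)+\dot c_i^n\pd_c u_{c_i^n}(\cdot-x_i^n)\bigr]+\sum_{k=1}^N l_k^n=\sum_{k=1}^N P_k^n JR_k^n.$$
Both sides lie in $\operatorname{Range} P_N^n$; testing against the dual basis $J^{-1}\pd_x u_{c_j^n}(\cdot-x_j^n)$, $J^{-1}\pd_c u_{c_j^n}(\cdot-x_j^n)$ turns this into a block-triangular linear system in $(\eps^{-3}\dot c_i^n, c_i^n-\dot x_i^n)$ and $(\eps^{-3}\alpha_{ik}^n,\beta_{ik}^n)$ whose solution is precisely \eqref{eq:modeq1} coupled with \eqref{eq:orthvk3}, once $P_k^n JR_k^n$ is re-expressed using the definitions of $\delta\widetilde{\mathcal{A}}_k^n$, $\delta\wR_{1k}^n$ and $\delta\wR_{2k}^n$.

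The main obstacle is bookkeeping: the calculation juggles four nested indices (the solitary-wave label $i$, the layer $k$, the two symplectic directions, and the inner/outer blocks $\mathcal{A}_k^n$ versus $\widetilde{\mathcal{A}}_k^n$), and one must carefully isolate leading-order contributions from $\delta$-corrections. Solvability of \eqref{eq:modeq1} requires invertibility of $\mathcal{A}_N^n-\sum_k\delta\widetilde{\mathcal{A}}_k^n$; by (P3)--(P4) and smallness of $\eps$, the leading block of $\mathcal{A}_N^n$ is diagonally dominant (off-diagonal entries involve tails of well-separated solitary waves) while $\sum_k\delta\widetilde{\mathcal{A}}_k^n$ is a lower-order correction controlled by $\|w_N^n\|$, so the inversion is routine. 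No genuinely new analytic estimate is needed; the lemma is the algebraic compatibility check that the modulation ODEs impose the geometric structure required for the linear stability theory of \cite{Mi2}.
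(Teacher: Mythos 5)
Your proposal follows essentially the same route as the paper: differentiate the symplectic pairings, use skew-adjointness of $J$, the differentiated travelling-wave identities \eqref{eq:secularmode}, the orthogonality of $\operatorname{Range}Q_k^n$ to $J^{-1}\pd_xu_{c_i^n},J^{-1}\pd_cu_{c_i^n}$, and the modulation equations \eqref{eq:modeq1}, \eqref{eq:orthvk3} to get a homogeneous (triangular) linear ODE with zero data at $t=n$, and then prove $u^n=U_N^n+w_N^n$ by telescoping $H'$ through \eqref{eq:Rkdef} and testing the resulting $\operatorname{Range}P_N^n$-valued identity against the dual functionals, exactly as in the paper. The only minor imprecision is your phrase ``$\dot\Phi_k^n=0$'': the $\pd_c$-components actually satisfy $\frac{d}{dt}\la v_k^n,J^{-1}\pd_cu_{c_i^n}\ra=\la v_k^n,J^{-1}\pd_xu_{c_i^n}\ra$ because of the $-\pd_xu_c$ term in \eqref{eq:secularmode}, but your framing via a homogeneous linear system with vanishing terminal data already accommodates this.
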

\begin{proof}
Due to the symmetry of \eqref{eq:boundst} with respect to $x$ and $c$,
we have
\begin{equation}
  \label{eq:secularmode}
c\pd_x^2u_{c}+JH''(u_{c})\pd_xu_{c}=0,\quad
c\pd_c\pd_xu_{c}+JH''(u_{c})\pd_cu_{c}=-\pd_xu_c.
\end{equation}
By \eqref{eq:modeq1} and \eqref{eq:orthvk3},
\begin{equation}
  \label{eq:modeq1'}
\mathcal{A}_k^n(t)\begin{pmatrix}
\eps^{-3}\alpha_{jk}^n(t)\\\beta_{jk}^n(t)
\end{pmatrix}_{1\le j\le k\downarrow}
=\delta R_{2k}^n(t)-\delta\mathcal{A}_k^n(t)
\begin{pmatrix}
\eps^{-3}\dot{c}_j^n(t)\\ c_j^n(t)-\dot{x}_j^n(t)
\end{pmatrix}_{1\le j\le k\downarrow}.  
\end{equation}
Differentiating \eqref{eq:orthv2k} with respect to $t$ and substituting
\eqref{eq:vk} and \eqref{eq:modeq1'} into the resulting equations,
we have for $1\le i\le k$, 
\begin{equation}
  \label{eq:modeqa}
  \begin{split}
&  \frac{d}{dt}\la v_k^n,J^{-1}\pd_xu_{c_j^n}(\cdot-x_j^n(t))\ra
\\=&
\la l_k^n, J^{-1}\pd_xu_{c_j^n}\ra
-\la v_k^n,(H''(U_k^n)-H''(u_{c_j^n}))\pd_xu_{c_j^n}\ra \\ &
-(\dot{x}_j^n-c_j^n)\la v_k^n,J^{-1}\pd_x^2u_{c_j^n}\ra
+\dot{c}_j^n\la v_k^n,J^{-1}\pd_c\pd_xu_{c_j^n}\ra=0,    
  \end{split}
\end{equation}
and
\begin{equation}
  \label{eq:modeqb}
  \begin{split}
&  \frac{d}{dt}\la v_k^n,J^{-1}\pd_cu_{c_j^n}(\cdot-x_j^n(t))\ra
\\=&
\la l_k^n, J^{-1}\pd_cu_{c_j^n}\ra
-\la v_k^n,(H''(U_k^n)-H''(u_{c_j^n}))\pd_cu_{c_j^n}\ra
+\la v_k^n,J^{-1}\pd_xu_{c_j^n}\ra
\\ &
-(\dot{x}_j^n-c_j^n)\la v_k^n,J^{-1}\pd_c\pd_xu_{c_j^n}\ra
+\dot{c}_j^n\la v_k^n,J^{-1}\pd_c^2u_{c_j^n}\ra
\\=& \la v_k^n,J^{-1}\pd_xu_{c_j^n}\ra.
  \end{split}
\end{equation}
Since $v_k^n(n)=0$, we have  \eqref{eq:orthv2k}.
\par
Finally, we will prove $u^n=U_N^n+w_N^n$.
It follows from \eqref{eq:vk} and the definition of $U_k$ that 
for $k=1,\cdots, N$,
\begin{equation}
  \label{eq:ukwk}
  \pd_t(U_k^n+w_k^n)=JH'(U_k^n+w_k^n)+\tilde{l}_k^n+ \sum_{i=1}^k(l_i^n-P_i^nJR_i^n),
\end{equation}
where $\tilde{l}_k^n=
\sum_{i=1}^k\{\dot{c}_i^n\pd_cu_{c_i^n}-(\dot{x}_i^n-c_i^n)\pd_xu_{c_i^n}\}$.
Hence it suffices to show
\begin{equation}
  \label{eq:diffpro}
 \tilde{l}_N^n+\sum_{i=1}^N\left(l_i^n-P_i^nJR_i^n\right)=0.
\end{equation}
The projection $P_k^n(t)$ can be written as 
\begin{equation}
  \label{eq:pkdef}
P_k^n(t)f=(\eps^3\pd_cu_{c_j^n},\pd_xu_{c_j^n})_{j=1,\cdots,k\rightarrow}
(\mathcal{A}_k^n)^{-1}
\begin{pmatrix}  \eps^{-4}\la f,J^{-1}\pd_xu_{c_i^n}\ra
\\ \eps^{-1}\la f,J^{-1}\pd_cu_{c_i^n}\ra
\end{pmatrix}_{i=1,\cdots,k\downarrow}.
\end{equation}
By \eqref{eq:modeq1}, \eqref{eq:modeq1'} and \eqref{eq:pkdef},
\begin{align*}
&  \begin{pmatrix}
\eps^{-4}\sum_{k=1}^N\la l_k^n-P_k^nJR_k^n, J^{-1}\pd_xu_{c_i^n}\ra
\\ \eps^{-1}\sum_{k=1}^N\la l_k^n-P_k^nJR_k^n, J^{-1}\pd_cu_{c_i^n}\ra
  \end{pmatrix}_{i=1,\cdots,N\downarrow}
\\=&
\sum_{k=1}^N\left\{\widetilde{\mathcal{A}}_k^n
\begin{pmatrix}\eps^{-3}\alpha_{ik}^n \\ \beta_{ik}^n 
\end{pmatrix}_{i=1,\cdots,k\downarrow} +\delta\wR_{1k}^n\right\}
\\=& \sum_{k=1}^N\left\{\delta\wR_{1k}^n+\delta \wR_{2k}^n
-\delta\widetilde{\mathcal{A}}_k^n
\begin{pmatrix}\eps^{-3}\dot{c}_i^n \\ c_i^n-\dot{x}_i^n
\end{pmatrix}_{i=1,\cdots,N\downarrow}\right\}
\\=& -
\begin{pmatrix}
\eps^{-4}\la \tilde{l}_N^n, J^{-1}\pd_xu_{c_i^n}\ra \\
\eps^{-1}\la \tilde{l}_N^n, J^{-1}\pd_cu_{c_i^n}\ra
\end{pmatrix}_{i=1,\cdots,N\downarrow}.
\end{align*}
Thus we have \eqref{eq:diffpro} since the left hand side of
\eqref{eq:diffpro} belongs to $\operatorname{Range}(P_N^n(t))$.
This completes the proof of Lemma \ref{lem:modulation}.
\end{proof}

\begin{lemma}
  \label{lem:parametersupb}
Let $c_{i,+}$ be as in Theorem \ref{thm:1} and let $v_k^n(t)$, $c_k^n(t)$
and $x_k^n(t)$ $(1\le k\le N)$ be as in Lemma \ref{lem:modulation}.
Then there exist positive numbers $\eps_0$, $\delta$, $L_0$, $C$ and $n_0\in\N$
satisfying the following: Suppose $\eps\in(0,\eps_0)$ and that for a $T>0$,
\begin{gather*} \sup_{n\ge n_0}\sup_{t\in[T,n]}\sum_{k=1}^N
\left(|c_k^n(t)-c_{k,+}|+\|v_k^n(t)\|_{X_k^n(t)^*}+\|v_k^n(t)\|_{l^2}\right)
\le \delta\eps^2,
\\ L:=\inf_{n\ge n_0}\inf_{t\in[T,n]}\min_{2\le k\le N}
\eps(x_k^n(t)-x_{k-1}^n(t))\ge L_0.
 \end{gather*}
Then for every $n\ge n_0$, $T\le t\le n$ and $1\le k\le N$,
\begin{gather}
\label{eq:xcupb}
\eps^{-3}|\dot{c}_k^n|+|\dot{x}_k^n-c_k^n|
\le C \eps^{\frac12}\sum_{i=1}^N\|v_i^n\|_{W^n(t)}+\eps^2e^{-k_1\eps d(t)},
\\ \label{eq:alphabetaupb}
\begin{split}
& \sum_{j=1}^k e^{k_1\eps(x_k^n(t)-x_j^n(t))}
(\eps^{-3}|\alpha_{jk}^n(t)|+|\beta_{jk}^n(t)|) \\  \le & C 
\eps^{-1}\|v_k^n\|_{X_k^n(t)^*}
\left(\sum_{i=1}^N\|v_i^n\|_{W^n(t)}+\eps^{\frac32}e^{-k_1\eps d(t)}\right).
\end{split}
\end{gather}
\end{lemma}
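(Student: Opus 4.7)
The strategy is to view \eqref{eq:modeq1} and \eqref{eq:modeq1'} as two coupled linear systems whose common coefficient matrix is $\mathcal{A}_k^n$ (or $\mathcal{A}_N^n$), perturbed by $\sum_k\delta\widetilde{\mathcal{A}}_k^n$. I would first establish uniform invertibility of $\mathcal{A}_k^n$, then show that $\sum_k\delta\widetilde{\mathcal{A}}_k^n$ is a small perturbation under the standing hypothesis on $v_k^n$, then estimate the source terms $\delta R_{1k}^n$ and $\delta R_{2k}^n$, and finally invert the systems and read off the claimed bounds.

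For invertibility of $\mathcal{A}_k^n$: the normalisations $\eps^{-1},\eps^{-4},\eps^2,\eps^{-1}$ in $\mathcal{A}_{i,j}^n$ are chosen so that, after the KdV rescaling of property (P4), every entry is $O(1)$. The diagonal blocks $\mathcal{A}_{i,i}^n$ converge to a fixed invertible $2\times 2$ KdV matrix, nondegeneracy coming from (P3). For $i\ne j$, the exponential localisation of $\pd_xu_c,\pd_cu_c$ and of $J^{-1}\pd_xu_c,J^{-1}\pd_cu_c$ around the corresponding centre, combined with the separation $x_j^n-x_i^n\ge L_0/\eps$, gives $\|\mathcal{A}_{i,j}^n\|=O(e^{-\alpha\eps(x_j^n-x_i^n)})$ for some $\alpha>0$. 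Hence, choosing $L_0$ large, $\mathcal{A}_k^n$ is uniformly invertible.

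For the source terms: a direct Cauchy--Schwarz in weighted $l^2$ gives $\|\delta\mathcal{A}_k^n\|\lesssim\|v_k^n\|_{W^n(t)}$ up to $\eps$-powers from the normalisations, which under the hypothesis is small, so $\mathcal{A}_N^n-\sum_k\delta\widetilde{\mathcal{A}}_k^n$ remains uniformly invertible. For $\delta R_{1k}^n$, write $U_k^n+w_k^n=(U_{k-1}^n+w_{k-1}^n)+(u_{c_k^n}+v_k^n)$ and Taylor-expand $H'$ around $U_k^n$: then $R_k^n$ decomposes into a pure soliton--soliton interaction $H'(U_{k-1}^n+u_{c_k^n})-H'(U_{k-1}^n)-H'(u_{c_k^n})$, a cross term $(H''(U_{k-1}^n+u_{c_k^n})-H''(u_{c_k^n}))v_k^n$, and a quadratic remainder in $v_k^n$. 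Pairing against the localised $\pd u_{c_i^n}$ in $\delta R_{1k}^n(i)$ converts the interaction piece into a factor of order $\eps^{7/2}e^{-k_1\eps d(t)}$, controls the cross piece by $\|v_k^n\|_{W^n(t)}$ times an exponentially small prefactor, and bounds the quadratic piece by $\sum_i\|v_i^n\|_{W^n(t)}^2$. A parallel treatment of $\delta R_{2k}^n$, using that $H''(U_k^n)-H''(u_{c_k^n})\approx H'''(u_{c_k^n})U_{k-1}^n$ on the support of $\pd u_{c_k^n}$, gives a bound by $\eps^{-1}\|v_k^n\|_{X_k^n(t)^*}\bigl(\eps^{3/2}e^{-k_1\eps d(t)}+\|w_{k-1}^n\|_{W^n(t)}\bigr)$.

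Inverting \eqref{eq:modeq1} and absorbing the smallness of $v_k^n$ yields \eqref{eq:xcupb}; substituting the result into \eqref{eq:modeq1'} and inverting $\mathcal{A}_k^n$ yields \eqref{eq:alphabetaupb}, where the factor $e^{k_1\eps(x_k^n-x_j^n)}$ on the left compensates the exponential decay produced by pairing $\pd u_{c_j^n}$ ($j\le k$) against the $X_k^n(t)^*$-weighted $v_k^n$. The main obstacle is bookkeeping: since here the weight is reversed compared with \cite{Mi2} (we are solving \eqref{eq:FPU} backward in time), each pairing must be re-examined to confirm that $v_k^n$ and the relevant soliton derivatives land in dual weighted spaces, and that the exponential factors in each estimate line up correctly with the $W^n(t)$ norms appearing on the right-hand sides.
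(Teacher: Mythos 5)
Your overall architecture (uniform invertibility of $\mathcal{A}_k^n$, smallness of $\delta\widetilde{\mathcal{A}}_k^n$, estimates on $\delta R_{1k}^n$, $\delta R_{2k}^n$, then inversion of \eqref{eq:modeq1} and \eqref{eq:modeq1'}) is the same as the paper's, which defers the bookkeeping to \cite[Lemma 2.5]{Mi2}. But the one structural input that the paper does spell out is exactly the point you get wrong: you claim that $J^{-1}\pd_xu_c$ and $J^{-1}\pd_cu_c$ are exponentially localised, hence that $\mathcal{A}_{i,j}^n=O(e^{-\alpha\eps(x_j^n-x_i^n)})$ for all $i\ne j$, so that $\mathcal{A}_k^n$ is a small perturbation of a block-diagonal matrix. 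This is false. By \eqref{eq:J-1}, $J^{-1}$ is a one-sided summation operator, so $J^{-1}\pd_cu_c$ decays only as $n\to+\infty$ and tends to a nonzero constant vector in the other direction (its limit involves $\la\pd_cr_c,1\ra$ and $\la\pd_cp_c,1\ra$, which do not vanish; only $J^{-1}\pd_xu_c$ is two-sidedly localised because $\pd_xu_c$ has zero mean). Quantitatively, Claim \ref{cl:j-1} gives $\la u,J^{-1}e^{-l\pd}v\ra=-\la u_1,1\ra\la v_2,1\ra-\la u_2,1\ra\la v_1,1\ra+O(e^{-al})$, so the blocks with $i>j$ converge to the nonzero matrices $B_2(c_i^n,c_j^n)$ rather than to $0$; this is precisely the content of \eqref{eq:Aij}, and the correct statement is that the dominant part of $\mathcal{A}_k^n$ is block \emph{lower triangular}, not block diagonal.

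This gap is not merely cosmetic. Uniform invertibility can be rescued (a block lower triangular matrix with diagonal blocks $B_1(c_i^n)$, invertible by (P3), has a uniformly bounded inverse), but your "choose $L_0$ large" argument as stated does not prove it, and the weighted estimate \eqref{eq:alphabetaupb} really uses the asymmetry: the amplified components $e^{k_1\eps(x_k^n-x_j^n)}(\eps^{-3}|\alpha_{jk}^n|+|\beta_{jk}^n|)$ with $j<k$ stay controlled because the inverse of a (dominantly) lower triangular matrix is again lower triangular, so the $j$-th component of the solution of \eqref{eq:orthvk3} only sees source entries $\delta R_{2k}^n(i)$ with $i\le j$, whose decay factor $e^{-k_1\eps(x_k^n-x_i^n)}$ is at least as strong as $e^{-k_1\eps(x_k^n-x_j^n)}$, while the genuinely exponentially small entries are only the ones with $i<j$ (again by Claim \ref{cl:j-1}). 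If the lower triangular blocks were treated as generic $O(1)$ off-diagonal perturbations without this ordering, or if one pretended all off-diagonal blocks were exponentially small, the exponential bookkeeping behind \eqref{eq:alphabetaupb} would not close. So you need to replace your localisation claim for $J^{-1}\pd_cu_c$ by the one-sided decay statement of Claim \ref{cl:j-1} and carry the triangular structure \eqref{eq:Aij} through the inversion; the rest of your outline then matches the intended proof.
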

\begin{proof}
Lemma \ref{lem:parametersupb} can be proved in the same way as
\cite[Lemma 2.5]{Mi2}. Since $J^{-1}\pd_cu_{c_i^n}$ decays exponentially as
$n\to\infty$, it follows from Claim \ref{cl:j-1} that
\begin{equation}
\label{eq:Aij}
  \mathcal{A}_{ij}^n=\left\{\begin{aligned}
& B_1(c_i^n)\quad\text{if $i=j$,}\\
& O(e^{-k_1\eps(x_j^n-x_i^n)})\quad\text{if $i<j$,}\\
& B_2(c_i^n,c_j^n)+O(e^{-k_1\eps(x_i^n-x_j^n)})\quad\text{if $i>j$,}\\
\end{aligned}\right.
\end{equation}
where $\theta_1(c):=dH(u_c)/dc$, 
$\theta_2(c):=\la \pd_c p_{c},1\ra\la \pd_cr_{c},1\ra$,
\begin{align*}
& \theta_3(c_i,c_j):=\la \pd_cp_{c_i},1\ra \la \pd_c r_{c_j},1\ra
+\la \pd_cp_{c_j},1\ra\la \pd_cr_{c_i},1\ra,\\
& \sigma_3:=\begin{pmatrix}1 & 0 \\ 0 & -1\end{pmatrix},
\quad B_1(c):=-(c\eps)^{-1}\theta_1(c)\sigma_3-\eps^2\theta_2(c)
\begin{pmatrix}0 & 0\\ 1 & 0\end{pmatrix},
\\ & B_2(c_i,c_j):=-\eps^2\theta_3(c_i,c_j)
\begin{pmatrix}0 & 0\\ 1 & 0\end{pmatrix}.
\end{align*}
We remark that the dominant part of $\mathcal{A}_k$ is a lower triangular
matrix.
\par
The other part of the proof are exactly the same as that of
\cite[Lemma 2.5]{Mi2}.
\end{proof}

Now we will estimate energy norm of $v_k^n$ $(1\le k\le N)$.
\begin{lemma}
  \label{lem:speed-Hamiltonian}
Let $c_{i,+}$ be as in Theorem \ref{thm:1} and let $v_k^n(t)$, $c_k^n(t)$,
$x_k^n(t)$ $(1\le k\le N)$ and $T$ be as in Lemma \ref{lem:parametersupb}.
Then there exist $n_0\in\N$ and a positive constant $C$ such that for
every $n\ge n_0$ and $t\in[T,n]$,
\begin{equation}
  \label{eq:enorm}
\|v^n(t)\|_{l^2}^2\le C\left(\sum_{i=1}^N(\eps|c_i^n(t)-c_{i,+}|+
\eps^{\frac32}\|v_i^n\|_{W^n(t)})+\eps^3e^{-k_1\eps d(t)}\right),
\end{equation}
\begin{equation}
  \label{eq:locenorm}
  \begin{split}
\|v_k^n(t)\|_{l^2}^2 \le & C\sum_{i=1}^k\left(
\eps|c_i^n(t)-c_{i,+}|+\eps^{\frac32}\|v_i^n\|_{W_k^n(t)}\right)
\\ & +C\eps^3\left(\sum_{i=1}^N\|v_i^n\|_{L^2(t,n;W^n(s)\cap X_i^n(s)^*)}^2
+e^{-k_1\eps d(t)}\right),
  \end{split}
\end{equation}
where $d(t)=\sigma\eps^2(t-T)+\eps^{-1}L$ and
$\sigma=\frac12\eps^{-2}\min_{2\le i\le N}(c_{i,+}-c_{i-1,+})$.
\end{lemma}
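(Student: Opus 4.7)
The plan is to derive both estimates from the Hamiltonian structure of \eqref{eq:FPU} together with the orthogonality conditions \eqref{eq:orthv2k}. For \eqref{eq:enorm}, the starting point is that $u^n=U_N^n+w_N^n$ is an exact solution of \eqref{eq:FPU}, so $H(u^n(t))$ is conserved. Combining this with $u^n(n)=\sum_i u_{c_{i,+}}(\cdot-x_{i,+}(n))$ and the exponential decay of solitary waves yields $H(u^n(t))=\sum_{i=1}^N H(u_{c_{i,+}}) + O(\eps^3 e^{-k_1\eps d(n)})$, and similarly $H(U_N^n(t))=\sum_i H(u_{c_i^n(t)})+O(\eps^3 e^{-k_1\eps d(t)})$. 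Taylor expanding $H(U_N^n+w_N^n)$ about $U_N^n$ then gives
\[
\tfrac12\la H''(U_N^n)w_N^n,w_N^n\ra
= \sum_i\bigl(H(u_{c_{i,+}})-H(u_{c_i^n(t)})\bigr) - \la H'(U_N^n),w_N^n\ra + O(\eps^3 e^{-k_1\eps d(t)}) + O(\|w_N^n\|_{l^2}^3).
\]

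Each term on the right I handle as follows. Property (P3) together with the KdV scaling (P4) bounds $|H(u_{c_{i,+}})-H(u_{c_i^n(t)})|$ by $C\eps|c_i^n(t)-c_{i,+}|$. For the linear term, write $H'(U_N^n)=\sum_i H'(u_{c_i^n})+[\text{interaction}]$ and use $H'(u_{c_i^n})=-c_i^n J^{-1}\pd_xu_{c_i^n}$ from \eqref{eq:boundst}; the orthogonality \eqref{eq:orthv2k} annihilates the diagonal pieces $\la J^{-1}\pd_xu_{c_i^n},v_k^n\ra$ for $k\ge i$, while the off-diagonal pieces with $k<i$ and the interaction residue are controlled via the exponential decay of $u_{c_i^n}$ and (P4) to produce the $\eps^{3/2}\|v_i^n\|_{W^n(t)}$ contribution. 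Since $V''(0)=1$ and $\|U_N^n\|_{l^\infty}=O(\eps^2)$, the quadratic form $\la H''(U_N^n)w_N^n,w_N^n\ra$ equals $(1+O(\eps^2))\|w_N^n\|_{l^2}^2$, and absorbing the cubic error using the smallness hypothesis on $v_i^n$ yields \eqref{eq:enorm}.

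For \eqref{eq:locenorm}, $U_k^n+w_k^n$ is not a solution of \eqref{eq:FPU}, but \eqref{eq:ukwk} combined with the antisymmetry of $J$ gives
\[
\frac{d}{dt}H(U_k^n+w_k^n) = \la H'(U_k^n+w_k^n),\,\tilde l_k^n + \sum_{i=1}^k(l_i^n-P_i^nJR_i^n)\ra.
\]
Integrating from $t$ to $n$ and invoking $w_k^n(n)=0$, $c_i^n(n)=c_{i,+}$, $x_i^n(n)=x_{i,+}(n)$, then expanding both sides as above, produces $\|v_k^n\|_{l^2}^2$ on the left (up to the $1+O(\eps^2)$ factor) plus the $\eps|c_i^n-c_{i,+}|$ and $\eps^3 e^{-k_1\eps d(t)}$ contributions on the right; the time integral of the driving terms is bounded using the modulation estimates \eqref{eq:xcupb}--\eqref{eq:alphabetaupb} and the decay structure of $R_k^n$ to yield the $\eps^3\|v_i^n\|_{L^2(t,n;W^n(s)\cap X_i^n(s)^*)}^2$ term. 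The main obstacle will be the bookkeeping of $\eps$-powers and localization weights when pairing the exponential tails of $J^{-1}\pd_xu_{c_i^n}$, $J^{-1}\pd_cu_{c_i^n}$, $R_k^n$, and the modulation errors against $v_k^n$ and $w_N^n$ in the various weighted norms; in particular the $\eps^{3/2}$ in front of $\|v_i^n\|_{W^n(t)}$ must be traced to the $\eps$-rescaling of solitary wave profiles in (P4), and the $\eps^3 e^{-k_1\eps d(t)}$ term to a product of two exponentially localized tails evaluated at distance $\ge\eps^{-1}L_0$.
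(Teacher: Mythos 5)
Your treatment of \eqref{eq:enorm} is essentially the paper's argument: conservation of $H(u^n)$ together with the data at $t=n$ gives $H(u^n(t))=\sum_iH(u_{c_{i,+}})+O(\eps^3e^{-k_1\eps d(n)})$, one expands $H(U_N^n+w_N^n)$ about $U_N^n$, uses (P2) (equivalently $V''(0)=1$ plus smallness) for coercivity of the quadratic form, bounds $|H(u_{c_{i,+}})-H(u_{c_i^n})|\lesssim\eps|c_i^n-c_{i,+}|$ via $\theta_1$, and uses the symplectic orthogonality \eqref{eq:orthv2k} to annihilate the pairings $\la v_k^n,J^{-1}\pd_xu_{c_i^n}\ra$ for $i\le k$, leaving cross terms of size $\eps^{\frac32}\|v_k^n\|_{W^n(t)}$ and interaction tails of size $\eps^3e^{-k_1\eps d(t)}$. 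That half is fine and matches the paper.

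For \eqref{eq:locenorm} there is a genuine gap. You differentiate $H(U_k^n+w_k^n)$, integrate over $[t,n]$, and assert that the time integral of the driving terms can be controlled using only the crude modulation bounds \eqref{eq:xcupb}--\eqref{eq:alphabetaupb} and the decay structure of $R_k^n$, so as to produce only the quadratic term $\eps^3\sum_i\|v_i^n\|^2_{L^2(t,n;W^n(s)\cap X_i^n(s)^*)}$. This cannot work as stated: the term $-\sum_{i}\la H'(u_{c_j^n}),P_i^nJR_i^n\ra$ contains, through $R_{i3}^n=O(|u_{c_i^n}||w_{i-1}^n|)$, the contribution $-\sum_i c_i^n\la R_{i3}^n,\pd_xu_{c_i^n}\ra$, which is \emph{linear} in $w_{i-1}^n$ and of size $O(\eps^{\frac92}\|w_{i-1}^n\|_{W^n(s)})$ pointwise in time. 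Its integral over $[t,n]$, an interval of unbounded length, cannot be absorbed into the right-hand side of \eqref{eq:locenorm}, which consists of quantities evaluated at time $t$ plus a quadratic $L^2$-in-time term; Cauchy--Schwarz in time introduces an uncontrolled factor $(n-t)^{\frac12}$, and an $L^1$-in-time bound on $\|w_{i-1}^n\|_{W^n(s)}$ is not available at this stage (the exponential decay in $s$ is only proved later, using this very lemma). The paper removes this term by a cancellation, not an estimate: the pairing $\la H'(u_{c_j^n}),\tilde l_k^n\ra$ produces $\sum_i\theta_1(c_i^n)\dot c_i^n$, and the refined form of the modulation equation \eqref{eq:modeq1}, recorded in \eqref{eq:ham28} as $\theta_1(c_i^n)\dot c_i^n=c_i^n\la R_{i3}^n,\pd_xu_{c_i^n}\ra$ up to quadratic and exponentially small errors, shows that the two linear terms cancel, yielding \eqref{eq:v2kenloss} with a purely quadratic right-hand side. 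The bound \eqref{eq:xcupb} records only the size of $\dot c_i^n$, not its leading-order identity, so it cannot produce this cancellation; to complete your argument you must extract the leading term of $\dot c_i^n$ from \eqref{eq:modeq1} and pair it against the leading term of the $P_i^nJR_i^n$ contribution exactly as in \eqref{eq:ham23}--\eqref{eq:ham28}.
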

\begin{proof}
  First we remark that
\begin{align*}
 x_{i+1}^n(t)-x_i^n(t)=& x_{i+1}^n(T)-x_i^n(T)
+\int_T^t(\dot{x}_{i+1}^n(s)-\dot{x}_i^n(s))ds
\\ \ge & \eps^{-1}L+\left(c_{i+1,+}-c_{i,+}-O(\delta\eps^2)\right)(t-T)
\\ \ge & d(t).
\end{align*}
\par

By (P2), there exists a positive constant $C'$
independent of $\eps$ such that
  \begin{align*}
dH:=& H(u^n(t))-\sum_{i=1}^NH(u_{c_{i,+}})
\\ = & H(U_N^n(t)+v^n(t))-\sum_{i=1}^NH(u_{c_{i,+}})
\\ = & I_1+I_2+\frac12\la H''(U_N^n)v^n,v^n\ra+O(\|v^n\|_{l^2}^3)
\\ \ge & C'\|v^n(t)\|_{l^2}^2+I_1+I_2,
  \end{align*}
where
$I_1=\la H'(U_N^n(t)),v^n(t)\ra$ and $I_2=H(U_N^n(t))-\sum_{i=1}^NH(u_{c_{i,+}}).$
It follows from \eqref{eq:orthvk3}, the fact that
$|H'(U_N^n(t))|\lesssim \sum_{i=1}^N |u_{c_i^n(t)}(\cdot-x_i^n(t))|$ and
Claims \ref{cl:ucsize}--\ref{cl:4} that 
$$|I_1| \le  \eps^{\frac32}\sum_{k=1}^{N-1}\|v_k^n\|_{W^n(t)}
+O\left(\eps^{\frac72}\sum_{k=1}^N\|v_k^n\|_{W^n(t)}\right),$$
\begin{align*}
  |I_2|\le &
\sum_{i=1}^N|H(u_{c_i^n(t)})-H(u_{c_{i,+}})|
+\left|H(U_N^n(t))-\sum_{i=1}^NH(u_{c_i^n(t)}(\cdot-x_i^n(t))\right|
\\ \lesssim &
\sum_{i=1}^N\theta_1(c_{i,+})|c_i^n(t)-c_{i,+}|
+\sum_{j\ne i}
\left\|u_{c_i^n(t)}(\cdot-x_i^n(t))u_{c_j^n(t)}(\cdot-x_j^n(t))\right\|_{l^1}
\\ \lesssim & 
\eps\sum_{i=1}^N|c_i^n(t)-c_{i,+}|+\eps^3e^{-k_1\eps d(t)}.
\end{align*}

Since $dH$ is independent of $t$,
\begin{align*}
|dH|=\left| H(U_N^n(n))-\sum_{1\le i\le N}H(u_{c_{i,+}})\right|
\lesssim  \eps^3e^{-k_1\eps h(n)}.
\end{align*}
Thus we prove \eqref{eq:enorm}.
\par

To prove \eqref{eq:locenorm}, it suffices to show
\begin{equation}
  \label{eq:w2knorm}
  \begin{split}
& \|w_k^n(t)\|_{l^2}^2 \lesssim  \eps\sum_{i=1}^k|c_i^n(t)-c_{i,+}|
+\eps^{\frac32}\sum_{i=1}^{k}\|v_i^n\|_{W_k^n(t)}
\\ & 
+\eps^3\left(\sum_{i=1}^N\|v_i^n\|_{L^2(t,n;W^n(s)\cap X_i^n(s)^*)}^2
+e^{-k_1\eps d(t)}\right).
  \end{split}
\end{equation}
Since  $J$ is skew-adjoint, it follows from \eqref{eq:ukwk} that
\begin{equation}
  \label{eq:ham21}
\begin{split}
\frac{d}{dt}H(U_k^n+w_k^n)= & \left\la H'(U_k^n+w_k^n),
\tilde{l}_k^n+\sum_{i=1}^k(l_i^n-P_i^nJR_i^n)\right\ra
= \sum_{i=1}^6II_i,
\end{split}
\end{equation}
where 
$U_{k,int}^n=H'(U_k^n)-\sum_{i=1}^k H'(u_{c_i^n})=
\sum_{\substack{1\le i,j\le k \\ i\ne j}}O\left(|u_{c_i^n}(\cdot-x_i^n)|
|u_{c_j^n}(\cdot-x_j^n)|\right)$ and
\begin{align*}
II_1=& \sum_{i=1}^k\la H'(U_k^n+w_k^n),l_i^n\ra,\quad
II_2= -\sum_{i,\,j=1}^k\la H'(u_{c_j^n}),P_i^nJR_i^n\ra,\\
II_3=& -\sum_{i=1}^k\la U_{k,int}^n ,P_i^nJR_i^n\ra,\quad
II_4= -\sum_{i=1}^k\la H'(U_k^n+w_k^n)-H'(U_k^n),P_i^nJR_i^n\ra,\\
II_5=& \sum_{j=1}^k \la H'(u_{c_j^n}), \tilde{l}_k^n\ra,\quad
II_6=\la H(U_k^n+w_k^n)-\sum_{j=1}^k H(u_{c_j^n}),\tilde{l}_k^n\ra.
\end{align*}
By Lemma \ref{lem:parametersupb} and the fact that
$\|H'(U_k^n+w_k^n)\|_{l^2}=O(\eps^{\frac32})$,
\begin{equation}
\label{eq:ham22}
|II_1|\lesssim 
\eps^3 \sum_{k=1}^{N}\|v_k^n\|_{X_k^n(t)^*\cap W^n(t)}^2+\eps^6e^{-k_1\eps d(t)}.
\end{equation}
Next, we will estimate $II_2$.
Let
\begin{align*}
& R_{k1}^n=H'(U_k^n+w_k^n)-H'(U_k^n+w_{k-1}^n)-H''(U_k^n)v_k^n,\\
& R_{k2}^n=H'(U_k^n)-H'(U_{k-1}^n)-H'(u_{c_k^n}),\\
& R_{k3}^n=H'(U_k^n+w_{k-1}^n)-H'(U_{k-1}^n+w_{k-1}^n)-H'(U_k^n)+H'(U_{k-1}^n).
\end{align*}
Then
\begin{equation}
\label{eq:rkn}
  \begin{split}
&  R_k^n=R_{k1}+R_{k2}^n+R_{k3}^n,\\
& |R_{k1}^n|\lesssim (|w_{k-1}^n|+|v_k^n|)|v_k^n|,\quad
|R_{k2}^n|\lesssim |u_{c_k^n}||U_{k-1}^n|,\\
& |R_{k3}|\lesssim  |u_{c_k^n}||w_{k-1}^n|.
  \end{split}
\end{equation}
As in \cite[Lemma 3.1]{Mi2}, we have
$(P_i^nJ)^*H'(u_{c_j^n})=c_j^n\pd_xu_{c_j^n}$ for $j\le i$ and
$(P_i^nJ)^*H'(u_{c_j^n})=O(\eps^3e^{-k_1\eps|x_j-x_i|})$ for $j>i$.
Hence it follows from \eqref{eq:rkn} that
\begin{equation}
  \label{eq:ham23}
  \begin{split}
II_2=& -\sum_{i=1}^k c_i^n\la R_{i3}^n,\pd_xu_{c_i^n}\ra
+O\left(\eps^3\sum_{i=1}^k\|v_i^n\|_{W^n(t)}^2+\eps^6e^{-k_1\eps d(t)}\right).
  \end{split}
\end{equation}
\par
Secondly, we will estimate $II_3$ and $II_4$.
Since
\begin{equation}
  \label{eq:pkupb}
\|P_k^nJ\|_{B(l^1)}+\|P_k^nJ\|_{B(W_k^n(t),W_k^n(t)^*)}+
\eps^{-\frac12}\|P_k^nJ\|_{B(\widetilde{W}_k(t),W_k^n(t)^*)}=O(\eps),
\end{equation}
it follows from Claim \ref{cl:intsize}, \eqref{eq:rkn} and the assumption of
Lemma \ref{lem:speed-Hamiltonian} that
\begin{align}
  \label{eq:ham24}
|II_3|\le & \|U_{k,int}^n\|_{l^2}\sum_{i=1}^k\|P_i^nJR_i^n\|_{l^2}
\lesssim  \eps^8e^{-k_1\eps d(t)},
\\ \label{eq:ham25}  
\begin{split}
|II_4|  \lesssim & \|w_k^n\|_{W^n(t)}
\left(\eps^3\sum_{i=1}^k\|v_i^n\|_{W^n(t)}+\eps^{\frac92}e^{-k_1\eps d(t)}\right).
\end{split}
\end{align}
By \eqref{eq:xcupb},
\begin{equation}
  \label{eq:ham26}
  \begin{split}
II_5=& \sum_{i=1}^k\left\{\theta_1(c_i^n)\dot{c}_i^n
+O(e^{-k_1\eps d(t)}(\eps|\dot{c}_i^n|+\eps^4|\dot{x}_i^n-c_i^n|))\right\}
\\ =& \sum_{i=1}^k\theta_1(c_i^n)\dot{c}_i^n+O\left(\eps^6e^{-k_1\eps d(t)}\right),
  \end{split}
\end{equation}
\begin{equation}
  \label{eq:ham27}
  \begin{split}
|II_6|\lesssim & (\|U_{k,int}^n\|_{W^n(t)}+\|w_k^n\|_{W^n(t)})
\|\tilde{l}_k^n\|_{W^n(t)^*}
\\ \lesssim & 
(\eps^{\frac72}e^{-k_1\eps d(t)}+\|w_k^n\|_{W^n(t)})
\sum_{i=1}^k (\eps^{-\frac12}|\dot{c}_i^n|+\eps^{\frac52}|\dot{x}_i^n-c_i^n|)
\\ \lesssim & \eps^3\sum_{k=1}^N\|v_i^n\|_{W^n(t)}^2+\eps^6e^{-k_1(\sigma\eps^3t+L)}.    
  \end{split}
\end{equation}
By \eqref{eq:Aij},
$$\widetilde{\mathcal{A}}_k^n(\mathcal{A}_k^n)^{-1}=
\begin{pmatrix}E_{k,k}\\ D_k\end{pmatrix}+O(e^{-k_1\eps d(t)}),$$
where $E_{k,k}$ is the $2k\times 2k$ identity matrix and 
$D_k$ is a $2(N-k)\times 2k$ matrix whose  $(2j-1)$-th row $(1\le j\le N)$
is a $0$ vector.
Thus by \eqref{eq:modeq1},  \eqref{eq:rkn} and the fact that
$\delta\wR_{2k}^n=O(\eps^{\frac12}e^{-k_1\eps d(t)}\|v_k^n\|_{W_k^n(t)}),$
\begin{equation}
  \label{eq:ham28}
\theta_1(c_i^n)\dot{c}_i^n= c_i^n\la R_{i3}^n,\pd_xu_{c_i^n}\ra
+O\left(\eps^3\sum_{k=1}^N\|v_k^n\|_{X_k^n(t)^*\cap W^n(t)}^2
+\eps^6e^{-k_1\eps d(t)}\right).
\end{equation}
Combining \eqref{eq:ham21}--\eqref{eq:ham28}, we have
\begin{equation}
  \label{eq:v2kenloss}
\left|\frac{d}{dt}H(U_k^n+w_k^n)\right|
\lesssim  \eps^3\sum_{i=1}^N\|v_i^n\|_{X_i^n(t)^*\cap W^n(t)}^2+\eps^6e^{-k_1\eps d(t)}.
\end{equation}
Integrating \eqref{eq:v2kenloss} over $[t,n]$, we obtain
\begin{equation*}
  \begin{split}
& \left|H(U_k^n(t)+w_k^n(t))-H(U_k(n))\right|
 \\ \lesssim & \eps^3\left(\sum_{i=1}^N \int_t^n \|v_i^n(s)\|_{X_i^n(s)^*\cap W^n(s)}^2ds
+e^{-k_1\eps d(t)}\right).
  \end{split}
\end{equation*}
Thus we have \eqref{eq:w2knorm} in the same way as \eqref{eq:enorm}.
This completes the proof of Lemma \ref{lem:speed-Hamiltonian}.
\end{proof}

Next, we will prove virial identities of $w_k^n$.
\begin{lemma}
  \label{lem:virialv2k}
Let $c_{i,+}$ be as in Theorem \ref{thm:1} and let $v_k^n(t)$, $c_k^n(t)$,
$x_k^n(t)$ $(1\le k\le N)$ and $T$ be as in Lemma \ref{lem:parametersupb}.
Let $a=\delta\eps$ and $\pai(t,x)=1-\tanh a(x-x_i^n(t))$.
Then there exists a positive number $C$ such that
for every $T\le t_1\le n$ and $1\le k\le N$,
\begin{equation}
  \label{eq:locdwform}
  \begin{split} &
\la\psi_{a,N}w_k^n(t_1),w_k^n(t_1)\ra_{l^2}+ \eps^{\frac32}\|w_k^n(t)\|_{L^2(t_1,n;W^n(t))}
\\ \le & C\eps^{\frac32}\left(\|v_k^n\|_{L^2(t_1,n;X_k^n(t)^*)}
+\sum_{i=1}^{k-1}\|w_i^n\|_{L^2(0,T;W_k^n(t))}+e^{-k_1\eps d(t)}\right).
  \end{split}
\end{equation}
\end{lemma}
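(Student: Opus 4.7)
My plan is to carry out a weighted $l^2$ energy estimate for $w_k^n=\sum_{i=1}^kv_i^n$ with the monotone decreasing weight $\psi_{a,N}(t,x)=1-\tanh a(x-x_N^n(t))$, and to integrate backward in time from $t=n$, where $w_k^n(n)=0$ by \eqref{eq:vk}. Summing the equations \eqref{eq:vk} over $1\le i\le k$ yields
\begin{equation*}
  \pd_tw_k^n=\sum_{i=1}^k\bigl(JH''(U_i^n)v_i^n+l_i^n+Q_i^nJR_i^n\bigr),
\end{equation*}
so that
\begin{equation*}
  \frac{d}{dt}\la\psi_{a,N}w_k^n,w_k^n\ra
  =\la(\pd_t\psi_{a,N})w_k^n,w_k^n\ra
  +2\sum_{i=1}^k\la\psi_{a,N}\pd_tv_i^n,w_k^n\ra.
\end{equation*}
The first term is nonnegative with density $a\dot{x}_N^n\sech^2(a(\cdot-x_N^n))$ since $\dot{x}_N^n\simeq c_{N,+}>1$. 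In the second, symmetrizing the main $JH''(U_i^n)v_i^n$ contributions using skew-adjointness of $J$ and self-adjointness of $H''(U_i^n)$ and applying the discrete Leibniz rule produces a bilinear form in $w_k^n$ with density proportional to $-\psi_{a,N}'\cdot V''(r_{c_i^n})$, which is positive by (P2). Combined with the symplectic orthogonality \eqref{eq:orthv2k}, the linear stability estimate of \cite{Mi2} then gives a lower bound of order $\eps^{3/2}\|w_k^n\|_{W^n(t)}^2$ on the sum of these two quadratic forms, modulo remainders.

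Next I would treat the inhomogeneities $l_i^n$ and $Q_i^nJR_i^n$. The modulation correction $l_i^n$ is controlled through \eqref{eq:alphabetaupb} and absorbed into the $\eps^{3/2}\|v_k^n\|_{L^2(X_k^n(t)^*)}$ term on the right after Cauchy--Schwarz against $\psi_{a,N}^{1/2}w_k^n$. For $Q_i^nJR_i^n$ I would invoke the decomposition $R_i^n=R_{i1}^n+R_{i2}^n+R_{i3}^n$ used in Lemma~\ref{lem:speed-Hamiltonian}: the quadratic piece $R_{i1}^n\lesssim(|w_{i-1}^n|+|v_i^n|)|v_i^n|$ is absorbed by the coercive term for $\delta$ small; the interaction piece $R_{i2}^n\lesssim|u_{c_i^n}||U_{i-1}^n|$ is exponentially small in the soliton separation and yields the $e^{-k_1\eps d(t)}$ contribution (using the separation bound $x_{i+1}^n-x_i^n\ge d(t)$ proved in Lemma~\ref{lem:speed-Hamiltonian}); and $R_{i3}^n\lesssim|u_{c_i^n}||w_{i-1}^n|$, localized near $x_i^n$, produces the sum $\sum_{i=1}^{k-1}\|w_i^n\|_{L^2(W_k^n(t))}$. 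Assembling these pieces gives a differential inequality of the form
\begin{equation*}
  \frac{d}{dt}\la\psi_{a,N}w_k^n,w_k^n\ra
  \ge\eps^{3/2}\|w_k^n\|_{W^n(t)}^2-E(t),
\end{equation*}
which I then integrate over $[t_1,n]$ using $w_k^n(n)=0$ to conclude \eqref{eq:locdwform}.

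The principal obstacle is establishing the coercive lower bound on the combined virial quadratic form, since $-\psi_{a,N}'$ is localized only near the single point $x_N^n$, so the commutator alone controls the $l^2$ mass near $x_N^n$ but not near $x_i^n$ for $i<N$. The way through is that $\psi_{a,N}\ge c>0$ throughout a neighborhood of every $x_i^n$ with $i<N$, so $\la\psi_{a,N}w_k^n,w_k^n\ra$ already controls the $l^2$ mass there directly; what remains is to verify that the orthogonality conditions \eqref{eq:orthv2k} eliminate the slowly decaying zero modes $\pd_xu_{c_i^n}$ and $\pd_cu_{c_i^n}$ at each internal position so that the global linear-stability spectral gap from \cite{Mi2} is available. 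A secondary delicacy is sign bookkeeping: because $\pd_t\psi_{a,N}\ge 0$ but the integration is backward from $t=n$, the favorable sign for the boundary term $\la\psi_{a,N}(t_1)w_k^n(t_1),w_k^n(t_1)\ra$ emerges only after combining it with the commutator-derived coercivity, and this cancellation has to be tracked carefully through the discrete Leibniz rule.
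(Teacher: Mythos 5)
Your overall strategy (a monotonicity/virial estimate with a weight travelling faster than the sound speed, integrated backward from $t=n$ where $w_k^n(n)=0$, with $l_i^n$ handled via \eqref{eq:alphabetaupb} and $R_i^n$ split as $R_{i1}^n+R_{i2}^n+R_{i3}^n$) is in the right spirit, but there is a genuine gap at exactly the point you flag as the ``principal obstacle,'' and your proposed way around it does not work. With the single weight $\psi_{a,N}$ centered at $x_N^n$, the only sign-definite term produced by the computation is localized where $\psi_{a,N}$ varies, i.e.\ near $x_N^n$: the weight-motion term has density $\propto a\dot{x}_N^n\sech^2 a(\cdot-x_N^n)$, and the flux term coming from $J$ is dominated by it only because $\dot x_N^n-1\gtrsim\eps^2$ exceeds the maximal signal speed $1$ of the lattice, again only on the support of $\psi_{a,N}'$. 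Near the interior solitons $x_i^n$, $i<N$, the weight is essentially the constant $2$, so there $\la\psi_{a,N}w_k^n,w_k^n\ra$ is just the plain $l^2$ mass, which is not monotone along the flow and is not controlled pointwise in time by the right-hand side of \eqref{eq:locdwform}. Consequently the differential inequality you write, with the full $\eps^{3/2}\|w_k^n\|_{W^n(t)}^2$ on the right (recall $W^n(t)$ carries weights at \emph{all} the positions $x_1^n,\dots,x_N^n$), cannot be derived from $\frac{d}{dt}\la\psi_{a,N}w_k^n,w_k^n\ra$. Invoking the orthogonality \eqref{eq:orthv2k} together with the linear stability theory of \cite{Mi2} does not repair this: that input is a decay estimate for the semigroup in the exponentially weighted spaces $X_k^n(t)^*$ and is used later (in Lemma \ref{lem:vNXN}); it does not convert a spectral gap into coercivity of a virial quadratic form at the interior solitons, and in fact the paper's proof of this lemma uses neither the orthogonality conditions nor any spectral information.

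The paper's resolution is to run the monotonicity argument with a \emph{family} of weights, one per soliton: for each $i=1,\dots,N$ it differentiates the localized Hamiltonian difference
\begin{equation*}
H_{k,i}=\la h(U_k^n+w_k^n)-h(U_k^n)-h'(U_k^n)\cdot w_k^n,\ \pai\ra_{l^2},
\qquad \pai=1-\tanh a(x-x_i^n(t)),
\end{equation*}
so that each weight moves with speed $\dot x_i^n>1$ and the sharp commutator bound $|II_1|\le\frac12\|\tpai(H'(U_k^n+w_k^n)-H'(U_k^n))\|_{l^2}^2(1+O(a^2))$ yields the coercive term $\delta'\eps^2\|\tpai w_k^n\|_{l^2}^2$ localized near $x_i^n$; summing the integrated inequalities over $i=1,\dots,N$ reconstructs the full $\|w_k^n\|_{L^2(t_1,n;W^n(t))}$ on the left, while $H_{k,i}(t_1)\simeq\|\pai^{1/2}w_k^n(t_1)\|_{l^2}^2$ gives the boundary term. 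Using the Hamiltonian density rather than the weighted $l^2$ norm also packages the nonlinear terms so that only the quadratic remainders you already identified ($R_{i1}^n$, $R_{i2}^n$, $R_{i3}^n$, $l_i^n$, $\tilde l_k^n$) need to be estimated. If you replace your single weight by this family (or, equivalently, by a weight whose derivative is bounded below on a neighborhood of every $x_i^n$ moving at the correct supersonic speeds), the rest of your outline goes through along the lines you describe.
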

\begin{proof}
Let $u^n={}^t(r^n,p^n)$, $h(u^n)=\frac12(p^n)^2+V(r^n)$ and
$h'(u^n)={}^t(V'(r^n),p^n)$,
$$
H_{k,i}=\la h(U_k^n+w_k^n)-h(U_k^n)-h'(U_k^n)\cdot w_k^n, \pai\ra_{l^2},$$
where $\cdot$ denotes the inner product in $\R^2$. Then
\begin{align*}
  \frac{dH_{k,i}}{dt}=& 
-\dot{x}_i^n\la h(U_k^n+w_k^n)-h(U_k^n)-h'(U_k^n)\cdot w_k^n,\pai'\ra_{l^2}
\\ & +\la H'(U_k^n+w_k^n)-H'(U_k^n),\pai\pd_t(U_k^n+w_k^n)\ra
-\la H''(U_k^n)\pd_tU_k^n,\pai w_k^n\ra
\\ =:I+II.
\end{align*}
By the mean value theorem, there exists a $\theta=\theta(t,n)\in(0,1)$
such that 
$$I= -\frac{\dot{x}_i^n}{2}\la H''(U_k^n+\theta w_k^n)w_k^n,\pai'w_k^n\ra.$$ Since
$\|U_k^n(w_k^n)^2\|_{l^1}\lesssim \eps^2(\|v_k^n\|_{X_k^n(t)^*}+\|w_{k-1}\|_{W_k^n(t)})^2$,
we have
\begin{align*}
I =\frac{\dot{x}_i^n}{2}(1+O(\|w_k^n\|_{l^\infty}))\|\tpai w_k^n\|_{l^2}^2
+O(\eps^3(\|v_k^n\|_{X_k^n(t)^*}+\|w_{k-1}\|_{W_k^n(t)})^2),
\end{align*}
where $\tpai=a^{\frac12}\sech a(x-x_i^n(t))$.
By \eqref{eq:ukwk} and the definition of $U_k^n(t)$,
we have
\begin{align*}
II=& \left\la H'(U_k^n+w_k^n)-H'(U_k^n), \pai JH'(U_k^n+w_k^n)
+\sum_{i=1}^k \pai(l_i^n-P_i^nJR_i^n)\right\ra
\\ & +\la \mathcal{N}_k^n, \pai \tilde{l}_k^n\ra
-\sum_{i=1}^k \la \pai H''(U_k^n)w_k^n,JH'(u_{c_i^n})\ra
\\= \sum_{i=1}^6II_i,
\end{align*}
where  $\mathcal{N}_k^n=H'(U_k^n+w_k^n)-H'(U_k^n)-H''(U_k^n)w_k^n=O((w_k^n)^2)$ and
\begin{align*}
& II_1=\la H'(U_k^n+w_k^n)-H'(U_k^n),\pai J(H'(U_k^n+w_k^n)-H'(U_k^n))\ra,\\
& II_2=\la \mathcal{N}_k^n,\pai JH'(U_k^n)\ra,\quad
 II_3=\la \mathcal{N}_k^n,\pai\tilde{l}_k^n \ra,\\
& II_4= \sum_{i=1}^k\la H'(U_k^n+w_k^n)-H'(U_k^n),\pai l_i^n\ra,\\
& II_5=-\sum_{i=1}^k \la H'(U_k^n+w_k^n)-H'(U_k^n),\pai P_i^nJR_i^n\ra,\\
& II_6=\left\la H''(U_k^n)w_k^n, \pai JU_{k,int}^n\right\ra.
\end{align*}
Following the proof of \cite[Lemma 3.3]{Mi2}, we see that
\begin{align*}
|II_1|\le & \frac{1}{2}\|\tpai(H'(U_k^n+w_k^n)-H'(U_k^n))\|_{l^2}^2
(1+O(a^2))
\\ \le & \|\tpai w_k^n\|_{l^2}^2(1+O(\|w_k^n\|_{l^\infty})
+O(\eps^3(\|v_k^n\|_{X_k^n(t)^*}^2+\|w_{k-1}\|_{W_k^n(t)}^2)),
\end{align*}
whence for a $\delta'>0$,
\begin{align*}
  I+II_1\ge & \frac{\dot{x}_i^n-1+O(\delta\eps^2)}{2}\|\tpai w_k^n\|_{l^2}^2
+O(\eps^3(\|v_k^n\|_{X_k^n(t)^*}+\|w_{k-1}\|_{W_k^n(t)})^2)
\\ \ge &  \delta'\eps^2\|\tpai w_k^n\|_{l^2}^2
+O(\eps^3(\|v_k^n\|_{X_k^n(t)^*}+\|w_{k-1}\|_{W_k^n(t)})^2).
\end{align*}
By Claims \ref{cl:ucsize}--\ref{cl:4} in Appendix \ref{sec:size}
and Lemma \ref{lem:parametersupb},
$$
|II_2|+|II_3|+|II_4| \lesssim
\eps^3\left(\|v_k^n\|_{X_k^n(t)^*}+\sum_{i=1}^{k-1}\|v_i^n\|_{W_k^n(t)}\right)^2.
$$
In view of the formula (2.19) and Claim \ref{cl:ucsize}, we have
$$\sup_n\sup_{t\in[T,n]}\|P_k^nJ\|_{B(l^2,W_k^n(t)^*)}=O(\eps), \quad
\sup_n\sup_{t\in[T,n]}\|P_k^nJ\|_{B(\widetilde{W}_k(t),W_k^n(t)^*)}=O(\eps^{\frac32}),$$
and
\begin{align*}
  |II_5|+|II_6| \lesssim &  \eps^3\left(\|v_k^n\|_{X_k^n(t)^*}
+\sum_{i=1}^{k-1}\|v_i^n\|_{W_k^n(t)}\right)^2+\eps^6e^{-2k_1\eps d(t)}.
\end{align*}
Combining the above, we obtain
\begin{equation}
  \label{eq:v2kvirial}
  \begin{split}
& \delta'\eps^2\|\tpai w_k^n\|_{l^2}^2-\frac{dH_{k,i}}{dt}
\\ \lesssim & \eps^3\left(\|v_k^n\|_{X_k^n(t)^*}+\sum_{i=1}^{k-1}\|w_i^n\|_{W_k^n(t)}
\right)^2+\eps^6e^{-2k_1\eps d(t)}.    
  \end{split}
\end{equation}
Integrating \eqref{eq:v2kvirial} over $[t_1,n]$ and summing up 
the resulting equations for $1\le i\le N$,
we have \eqref{eq:locdwform} since $H_{k,i}=\|\pai^{\frac12}w_k^n\|_{l^2}^2
(1+O(\|U_k^n\|_{l^\infty}+\|w_k^n\|_{l^\infty}))$.
Thus we prove Lemma \ref{lem:virialv2k}.
\end{proof}

We use the following exponential stability property of the linearized FPU
equation.
\begin{lemma}
  \label{lem:linearstability}
Let $\zeta={}^t(\zeta_1,\zeta_2)\in C^1(\R^2)$,
 $\mathcal{F}_n\zeta\in L^1(\T)$,
$F_1$, $F_2\in C(\R;l^2_{-k_1\eps})$ and let
$w(t)\in C^1(\R;l^2_{k_1\eps})$ be a solution of 
\begin{equation}
  \label{eq:LFPU2}
\pd_tw(t)=JH''(\widetilde{U}_k(t)+\zeta(t))w(t)+F_1(t)+JF_2(t).
\end{equation}
There exist positive numbers $\eps_0$, $L_0$, $\delta_1$, $\delta_2$, $M$ and
$b$ satisfying the following: Suppose $\eps\in(0,\eps_0)$,
$0\le T_1\le T_2\le \infty$ and that
\begin{gather*}
\inf_{t\in[T_1,T_2]}\min_{2\le j\le N}\eps(x_{j,+}(t)-x_{j-1,+}(t))\ge L_0,\\
\sup_{t\in[T_1,T_2]}\sup_{x\in\R}
(|\zeta_1(t,x)|+\eps^{-1}|\pd_x\zeta_1(t,x)|)\le \delta_1\eps^2,
\end{gather*}
and
\begin{equation}
  \label{eq:orth3'}
  \begin{split}
 & \eps^{-\frac32}|\la w(t), J^{-1}\pd_xu_{c_{i,+}}(\cdot-x_{i,+}(t))\ra|
+\eps^{\frac32}|\la w(t),J^{-1}\pd_cu_{c_{i,+}}(\cdot-x_{i,+}(t))\ra|
\\ \le &  \delta_2\|w(t)\|_{X_k^n(t)^*}
\end{split}
\end{equation}
for $1\le i\le k$ and  $t\in[T_1,T_2]$.
Then for every $t, t_1\in[T_1,T_2]$ satisfying $t\le t_1$,
\begin{align*}
 \|w(t)\|_{X_k^n(t)^*}  \le &
Me^{b\eps^3(t-t_1)}\|w(t_1)\|_{X_k^n(t_1)}
\\ &+M\int^{t_1}_t e^{b\eps^3(t-s)}
(\|F_1(s)\|_{X_k^n(s)^*}+\eps^{-\frac12} (s-t)^{-\frac12}\|F_2(s)\|_{X_k^n(s)^*})ds.
\end{align*}
\end{lemma}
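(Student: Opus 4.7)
The plan is to establish this lemma as the backward-in-time counterpart of the linear stability theorem of \cite{Mi2}. As the remark preceding the statement indicates, the sole structural difference from \cite{Mi2} is the orientation of the exponential weight: here we use $e^{k_1\eps(\cdot-x_k^n(t))}$ (large to the right of the soliton array) whereas \cite{Mi2} uses $e^{-k_1\eps(\cdot-x_k^n(t))}$ (large to the left). Since FPU is time-reversible under $(t,r,p)\mapsto(-t,r,-p)$, one can mirror the argument of \cite{Mi2} and propagate the weighted norm backward from the terminal time $t_1$ rather than forward from the initial time.

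Concretely, I would set $W(t)=e^{k_1\eps(\cdot-x_k^n(t))}w(t)$ and derive the conjugated evolution
$$\pd_t W = \bigl(-k_1\eps\dot{x}_k^n + J_\eps H''(\widetilde U_k+\zeta)\bigr)W + \widetilde F_1 + J_\eps \widetilde F_2,$$
where $J_\eps$ is $J$ conjugated by the exponential weight and $\widetilde F_i$ are the weighted forcings. The symbol of $J_\eps$ is obtained from that of $J$ by shifting $\pd$ to $\pd-k_1\eps$, so the essential spectrum of $J_\eps H''(u_{c_{i,+}})$ is pushed into a half-plane $\Re\lambda\le -b'\eps^3$; combined with the drift $-k_1\eps \dot{x}_k^n\le -k_1\eps(c_{k,+}-O(\delta_1\eps^2))$ coming from the moving weight, this gives a spectral gap of order $\eps^3$ for the linearized propagator, modulo the finite-dimensional subspace spanned by $\pd_xu_{c_i^n},\pd_cu_{c_i^n}$. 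The approximate orthogonality \eqref{eq:orth3'} controls the projection of $W$ onto these secular modes by $\delta_2\|W\|_{l^2}$, so for $\delta_2$ sufficiently small the projection error is absorbed into the coercivity. A backward Gronwall integration then yields the first two terms of the estimate (the initial-data term and the $F_1$-Duhamel term).

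The $JF_2$ forcing requires one extra ingredient: because $J_\eps$ contains the shift operators $e^{\pm\pd}$ (cf.\ \eqref{eq:J-1}), when composed with the conjugated semigroup $e^{(s-t)J_\eps H''}$ it gains a smoothing factor of order $\eps^{-1/2}(s-t)^{-1/2}$ coming from the parabolic-type regularization produced by the shifted dispersion relation. This is exactly the singular kernel appearing in the second term of the stated bound, and it is handled by a standard singular-integral Duhamel estimate.

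The main obstacle is the very first step: establishing a uniform-in-$t$ spectral gap for $J_\eps H''(\widetilde U_k+\zeta)$ modulo the secular modes, across an $N$-soliton base state. This is precisely where \cite{Mi2} does most of the technical work, gluing together the single-soliton spectral analysis of Friesecke--Pego \cite{FP3,FP4} using the exponential tails of the solitary waves and the separation condition $\inf\min_j \eps(x_{j,+}-x_{j-1,+})\ge L_0$; the smallness assumption on $\zeta$ then allows the gap for $\widetilde U_k$ to be perturbed to one for $\widetilde U_k+\zeta$ without loss.
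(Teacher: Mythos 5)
Your plan reduces the lemma to a spectral-gap statement for the weight-conjugated operator, to be obtained by ``gluing together the single-soliton spectral analysis of Friesecke--Pego using the exponential tails'' and the separation condition. That is not how the result is proved, and it is precisely the step that fails. For co-propagating waves the interaction terms seen by the linearization are of size $O(\eps^2e^{-k_1\eps d(t)})$ with $d(t)\sim \eps^{-1}L+\eps^2t$: they decay in time only at the rate $\eps^3$, i.e.\ on exactly the scale of the decay $e^{b\eps^3(t-t_1)}$ you are trying to prove, and for a fixed separation threshold $L_0$ they are not small relative to the $O(\eps^3)$ gap (the relevant ratio is of order $\eps^{-1}e^{-k_1\eps d}$). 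Moreover the base state is genuinely time-dependent, so there is no autonomous spectrum to perturb. This is the obstruction the paper states explicitly in the introduction: strong linear stability of co-propagating multi-solitons cannot be derived from \cite{FP3,FP4} in the way Hoffman and Wayne \cite{Hoff-Way,Hoff-Way2} handled counter-propagating waves, where the interaction is integrable in time.

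The actual proof (following \cite{Mi2}) replaces any spectral-gap/gluing argument by the integrable structure of KdV at leading order in $\eps$. One takes the exact KdV $N$-soliton $\varphi_N(t,x;\mk,\mgamma)=\pd_x^2\log\det(I+C_N)$, the linearized KdV equation \eqref{eq:LKdV} around it, and the exponential decay estimate \eqref{eq:lkdvest} for the projected linearized KdV flow in exponentially weighted $L^2$, proved in \cite[Section 6]{Mi2} via the explicit $N$-soliton formulas; crucially, \eqref{eq:lkdvest} is uniform in the relative positions of the solitons, which is exactly what a perturbative gluing cannot deliver. The lemma then follows by splitting a solution of \eqref{eq:LFPU2} into a low-frequency part, compared with the linearized KdV evolution and controlled by \eqref{eq:lkdvest}, and a high-frequency part, controlled by the free semigroup bound $\|e^{-a(n-ct)}e^{tJ}f\|_{l^2}\lesssim e^{(ca-2\sinh(a/2))t}\|e^{-an}f\|_{l^2}$ for $t\le0$, the rest of the argument being as in \cite[Lemma 5.1]{Mi2}. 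The peripheral elements of your proposal (backward-in-time orientation of the weight, use of \eqref{eq:orth3'} to control the secular modes, Duhamel with the $\eps^{-1/2}(s-t)^{-1/2}$ singularity for the $JF_2$ term) are consistent with this scheme, but the core ingredient --- the linearized-KdV $N$-soliton estimate that substitutes for the unavailable spectral gap --- is missing, so the argument as proposed does not close.
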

\begin{proof}
Let $0<k_1<\cdots<k_N$, $\gamma_i\in\R$, 
$\theta_i=k_i(x-4k_i^2t-\gamma_i)$ for $i=1,\dots,N$ and let 
$\mk=(k_1,\cdots,k_N)$, $\mgamma=(\gamma_1,\ldots, \gamma_N)\in\R^N$ and
$$C_N=\left[\frac{1}{k_i+k_j}e^{-(\theta_i+\theta_j)}
\right]_{\substack{i=1,\dots,N\downarrow\\ j=1,\cdots,N\rightarrow}}.$$
Then $\varphi_N(t,x;\mk,\mgamma):=\pd_x^2\log\det(I+C_N)$ is
an N-soliton solution of KdV
\begin{equation}
  \label{eq:KdV}
  \pd_tu+\pd_x(\pd_x^2u+6u^2)=0.
\end{equation}
\par
Let $0<a<2k_1$ and
\begin{gather*}
\mathcal{P}(t,\mk,\mgamma):L^2_{-a}\to
\spann\{\pd_{\gamma_i}\varphi_{N}(t,y;\mk,\mgamma),\;
\pd_{k_i}\varphi_{N}(t,y;\mk,\mgamma)\,:\, 1\le i\le N\},
\\  \mathcal{Q}(t,\mk,\mgamma)=I-\mathcal{P}(t,\mk,\mgamma)
\end{gather*}
be projections associated with
\begin{equation}
  \label{eq:LKdV}
\pd_tv+\pd_x(\pd_x^2v+12\varphi_N(\mk,\mgamma)v)=0
\end{equation}
such that for $v\in\mathcal{Q}(t,\mk,\mgamma)$ and $i=1,\cdots,N$,
\begin{align*}
& \int_\R v(x)\int_x^\infty \pd_{\gamma_i}\varphi_N(t,y;\mk,\mgamma)dydx=0,\\
& \int_\R v(x)\int_x^\infty\pd_{k_i}\varphi_N(t,y;\mk,\mgamma)dydx=0.
\end{align*}
It follows from \cite[Section 6]{Mi2} that for every $t\le s$ and $c$,
$x_0\in\R$,
\begin{equation}
  \label{eq:lkdvest}
\|e^{-a(\cdot-ct-x_0)}\mathcal{Q}(t)v(t)\|_{L^2}\lesssim
e^{a(c-a^2)(t-s)}\|e^{-a(\cdot-cs-x_0)}\mathcal{Q}(s)v(s)\|_{L^2}  
\end{equation}
if $v(t)$ is a solution of \eqref{eq:LKdV}.
Applying \eqref{eq:lkdvest} to the low frequency part of a solution to
\eqref{eq:LFPU2} and applying a semigroup estimate
$$\|e^{-a(n-ct)}e^{tJ}f\|_{l^2}\lesssim e^{(ca-2\sinh(a/2))t}\|e^{-an}f\|_{l^2}
\quad\text{for $t\le0$,}$$
to the high frequency part of the solution, we obtain
Lemma \ref{lem:linearstability} in exactly the same way as the proof of
\cite[Lemma 5.1]{Mi2}.
\end{proof}
Now we will estimate remainder parts of solutions $u_n$ in exponentially
weighted space. 
\begin{lemma}
\label{lem:vNXN}
 Let $c_{i,+}$ be as in Theorem \ref{thm:1} and let $v_k^n(t)$, $c_k^n(t)$,
$x_k^n(t)$ $(1\le k\le N)$ and $T$ be as in Lemma \ref{lem:parametersupb}.
Then there exists positive constants $A_i$ $(1\le i\le N)$ such that
  \begin{equation}
    \label{eq:vkXk}
\|v_i^n(t)\|_{X_k^n(t)^*}\le A_i\eps^{\frac32}e^{-k_1\eps d(t)}
\quad\text{for every $t\in [T,n]$ and $1\le i\le N$.}
  \end{equation}
\end{lemma}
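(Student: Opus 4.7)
The plan is to apply the exponential stability estimate of Lemma~\ref{lem:linearstability} backward in time to the equation \eqref{eq:vk} for $v_k^n$ on $[t,n]$, and close the resulting forcing-driven inequality by induction on $k\in\{1,\dots,N\}$. Because $v_k^n(n)=0$, the initial-data contribution from Lemma~\ref{lem:linearstability} vanishes, so the only thing to control is the Duhamel integral of the sources $l_k^n$ and $Q_k^nJR_k^n$. The induction hypothesis is that \eqref{eq:vkXk} already holds for all indices $j<k$; the base case $k=1$ is particularly clean because $U_0^n=w_0^n=0$ forces the decomposition \eqref{eq:rkn} to reduce to $R_1^n=R_{11}^n=O((v_1^n)^2)$.

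First I would verify the hypotheses of Lemma~\ref{lem:linearstability}. The perturbative orthogonality \eqref{eq:orth3'} follows from the exact orthogonality \eqref{eq:orthv2k} for $v_k^n$ together with the bound $|c_i^n(t)-c_{i,+}|=O(\delta\eps^2)$ and the analogous bound on $|x_i^n(t)-x_{i,+}(t)|$ obtained by time-integrating \eqref{eq:xcupb}; (P4) then ensures the modulation modes $\pd_xu_{c_i^n}$ and $\pd_cu_{c_i^n}$ are uniformly close to $\pd_xu_{c_{i,+}}$ and $\pd_cu_{c_{i,+}}$ in the relevant exponentially weighted norms. The remaining assumptions on the background perturbation $\zeta$ and the soliton separation $L\ge L_0$ are already guaranteed by the standing hypotheses of Lemma~\ref{lem:parametersupb}.

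Next I would estimate the forcing in the $X_k^n(t)^*$-norm. The term $l_k^n$ is controlled by \eqref{eq:alphabetaupb} together with the pointwise decay of $\pd_{x,c}u_{c_i^n}(\cdot-x_i^n)$ away from $x_i^n$, yielding $\|l_k^n\|_{X_k^n(t)^*}\lesssim \|v_k^n\|_{X_k^n(t)^*}\bigl(\sum_i\|v_i^n\|_{W^n(t)}+\eps^{3/2}e^{-k_1\eps d(t)}\bigr)$. For $Q_k^nJR_k^n$, I would use the decomposition \eqref{eq:rkn}: the soliton interaction term $R_{k2}^n=O(|u_{c_k^n}|\,|U_{k-1}^n|)$ contributes an $e^{-k_1\eps d(t)}$ factor thanks to the separation $x_k^n-x_{k-1}^n\ge d(t)$; the quadratic term $R_{k1}^n$ is absorbed using $\|v_k^n\|_{l^\infty}=O(\eps^2)$ from the smallness assumption; and the cross term $R_{k3}^n=O(|u_{c_k^n}|\,|w_{k-1}^n|)$ is handled through the induction hypothesis applied to $v_1^n,\dots,v_{k-1}^n$. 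Inserting these bounds into the Duhamel inequality and using the contracting weight $e^{b\eps^3(t-s)}$, together with the independent control of $\|v_i^n\|_{W^n(t)}$ supplied by Lemmas~\ref{lem:speed-Hamiltonian} and \ref{lem:virialv2k}, closes the bootstrap.

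The main obstacle will be the cross term $R_{k3}^n\sim u_{c_k^n}w_{k-1}^n$: since $w_{k-1}^n$ is not localized near $x_k^n$, a naive estimate would lose the required decay. The saving comes from the sharp localization of $u_{c_k^n}(\cdot-x_k^n)$ at $x_k^n$, which forces only the tail of $w_{k-1}^n$ at $x_k^n$ to contribute; combined with the one-sided weight of $X_k^n(t)^*$, which is small to the left where the $v_j^n$ with $j<k$ are concentrated, and the inductive bound $\|v_j^n\|_{X_j^n(t)^*}\le A_j\eps^{3/2}e^{-k_1\eps d(t)}$, this tail is exponentially small in $\eps d(t)$, producing the required factor. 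Iterating the resulting inequality on $k$ yields constants $A_k$ depending only on $A_1,\dots,A_{k-1}$ and completes the proof.
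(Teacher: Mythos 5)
Your proposal matches the paper's own argument: both apply Lemma \ref{lem:linearstability} backward in time to \eqref{eq:vk} with $v_k^n(n)=0$, verify the hypotheses via \eqref{eq:orthv2k} and the parameter bounds obtained by integrating \eqref{eq:xcupb} within a bootstrap, estimate the sources $l_k^n$ and $Q_k^nJR_k^n$ through \eqref{eq:alphabetaupb} and the decomposition \eqref{eq:rkn} (with the $R_{k3}^n$ term controlled by the induction hypothesis on $v_1^n,\dots,v_{k-1}^n$ together with the virial estimate of Lemma \ref{lem:virialv2k}), and close by absorbing the small self-coupled terms in the Duhamel inequality. This is essentially the same proof as in the paper, with only cosmetic differences in how the $u_{c_k^n}w_{k-1}^n$ term is weighted.
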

\begin{proof}
To begin with, we will estimate difference between $x_i^n(t)$ and $x_{i,+}(t)$
assuming \eqref{eq:vkXk}.
Let $T_1(n):=\inf\{\tau\ge T:\text{\eqref{eq:vkXk} holds for
$\tau \le t \le n$}\}.$ 
Lemma \ref{lem:virialv2k} and \eqref{eq:xcupb} imply that for $t\in[T_1(n),n]$,
\begin{align*}
  \eps^{-^3}|\dot{c}_i^n(t)|+|\dot{x}_i^n(t)-c_i^n(t)|
\lesssim &
\eps^{\frac12}(\|v^n(t)\|_{W^n(t)}+\eps^{\frac32}e^{-k_1\eps d(t)})
\\ \lesssim & \eps^2\left(\sum_{i=1}^k \|v_i^n\|_{L^2(t,n;X_k^n(t)^*)}
+ e^{-k_1\eps d(t)}+e^{-k_1\eps d(n)}\right)
\\ \lesssim & \eps^2e^{-k_1\eps d(t)},
\end{align*}
whence for $t\in[T_1(n),n]$,
\begin{equation}
  \label{eq:diffx+xi}
  \begin{split}
|x_i^n(t)-x_{i,+}(t)|\le & \int^n_t
\left(|\dot{x}_i^n(s)-c_i^n(s)|+
\int_s^n|\dot{c}_i^n(\tau)|d\tau\right)ds
\\ \lesssim & \eps^2\int_t^n e^{-k_1\eps d(s)}ds \lesssim \eps^{-1}e^{-k_1L},
\end{split}
\end{equation}
\begin{equation}
  \label{eq:diffc+ci}
  \begin{split}
|c_i^n(t)-c_{i,+}(t)|\le & \int^n_t|\dot{c}_i^n(s)|ds
\\ \lesssim & \eps^2e^{-k_1L}.
\end{split}
\end{equation}
\par

Now we will estimate $\|v_k^n(t)\|_{X_k^n(t)^*}$ by induction. Suppose that
\eqref{eq:vkXk} holds for $i<k$ and $t\in[T_1(n),n]$.
Let $\zeta(t)=\sum_{i=1}^k\{u_{c_i^n(t)}^n(\cdot-x_i^n(t))
-u_{c_{i,+}}^n(\cdot-x_{i,+}(t))\}$.
By Claim \ref{cl:ucsize}, \eqref{eq:diffx+xi} and \eqref{eq:diffc+ci}, we have
$|\pd_x^j\zeta(t,x)|\lesssim e^{-k_1L}\eps^{j+2}$ for $j=0$, $1$.
Applying Lemma \ref{lem:linearstability}
to \eqref{eq:vk}, we see that for $t\in[T_1(n),n]$ and $1\le k\le N$,
\begin{equation}
  \label{eq:v2kint1}
  \begin{split}
\|v_k^n(t)\|_{X_k^n(t)^*}\lesssim  & \int_t^n e^{b\eps^3(t-s)}
\left(\|l_k^n(s)\|_{X_k^n(s)^*}+\|[Q^n(s),J]R_k^n\|_{X_k^n(s)^*}\right)ds
\\ & +\eps^{-\frac12}\int_{t_j}^te^{b\eps^3(t-s)}(s-t)^{-\frac12}
\|Q^n(s)R_k^n\|_{X_k^n(s)^*}ds.  
\end{split}
\end{equation}
By the definition of $l_k^n$ and \eqref{eq:alphabetaupb},
\begin{equation}
  \label{eq:lkn}
  \begin{split}
\|l_k^n\|_{X_k^n(t)^*}\lesssim & 
\eps^{\frac32}\|v_k^n\|_{X_k^n(t)^*}\left(\sum_{i=1}^{N}\|v_i^n\|_{W^n(t)}
+\eps^{\frac32}e^{-k_1\eps d(t)}\right)
\\ \lesssim & (\delta+e^{-k_1L})\eps^3\|v_k^n\|_{X_k^n(t)^*}.      
  \end{split}
\end{equation}
By Lemma \ref{lem:virialv2k} and the induction hypothesis,
$$\|w_{k-1}^n\|_{W^n(t)} \lesssim
\eps^{\frac32}\left(\sum_{i=1}^{k-1}\|v_i^n\|_{L^2(t,n;X_i^n(t)^*)}+e^{-k_1\eps d(t)}\right)
\lesssim \eps^{\frac32}e^{-k_1\eps d(t)}.$$
Hence it follows from \eqref{eq:rkn} that
 \begin{align*}
\|R_k^n\|_{X_k^n(t)^*}\lesssim & 
\|R_{k1}^n\|_{X_k^n(t)^*}+\|R_{k2}^n\|_{X_k^n(t)^*}+\|R_{k3}^n\|_{X_k^n(t)^*}
\\ \lesssim & 
\|v_k^n\|_{X_k^n(t)^*}(\|v_k^n\|_{l^2}+ \|w_{k-1}^n\|_{l^2})
+\eps^{\frac72}e^{-k_1\eps d(t)}+\eps^2\|w_{k-1}^n\|_{W^n(t)}
\\ \lesssim &  \eps^2\delta\|v_k^n\|_{X_k^n(t)^*}+\eps^{\frac72}e^{-k_1\eps d(t)}.
 \end{align*}
Substituting the above inequalities and $\|[Q^n(s),J]\|_{B(X_k^n(s)^*)}=O(\eps)$
into \eqref{eq:v2kint1}, we have
\begin{equation}
  \label{eq:<n}
  \begin{split}
&  \|v_k^n(t)\|_{X_k^n(t)^*} \\ \lesssim &
\eps^{\frac92}\int_t^n e^{b\eps^3(t-s)}
(1+\eps^{-\frac32}(s-t)^{-\frac12})e^{-k_1\eps d(s)}ds
\\ & + (\delta+e^{-k_1L})\eps^3\int_t^ne^{b\eps^3(t-s)}
(1+\eps^{-\frac32}(s-t)^{-\frac12})\|v_k^n(s)\|_{X_k^n(s)^*}ds
\\ \le &
C_1\eps^{\frac32}e^{-k_1\eps d(t)}+C_2(\delta+e^{-k_1L})
\sup_{s\in[t,n]}\|v_k^n(s)\|_{X_k^n(s)^*},
  \end{split}
\end{equation}
where $C_1$ and  $C_2$ are positive constants independent of $n$ and $t
\in[T_1(n),n]$. Thus we have
\begin{equation}
  \label{eq:v2kint2}
\|v_k^n(t)\|_{X_k^n(t)^*} \le 2C_1\eps^{\frac32}e^{-k_1\eps d(t)}
\end{equation}
for $t\in [T_1(n),n]$ and $1\le k\le N$ provided $C_2(\delta+e^{-k_1L})\le 1/2$.
Letting  $A_k=2C_1$, we have $T_1=T$  from  \eqref{eq:v2kint2}.
Thus we complete the proof of Lemma \ref{lem:vNXN}.
\end{proof}

Let $\gamma_i^n(t):=x_i^n(t)-c_{i,+}t$.
A system of \eqref{eq:xcinit} and \eqref{eq:modeq1} can be rewritten
as
\begin{equation}
  \label{eq:modeq2}
  \begin{split}
& \begin{pmatrix}c_i^n(t) \\ \gamma_i^n(t)\end{pmatrix}_{1\le i\le N\downarrow}
-\begin{pmatrix}c_{i,+} \\ \gamma_{i,+}\end{pmatrix}_{1\le i\le N\downarrow}
\\= &  \mathcal{E}_1 \int_t^n
\left(\mathcal{A}_N^n-\sum_{k=1}^N\delta\widetilde{\mathcal{A}}_k^n\right)^{-1}
\sum_{k=1}^N\left(\delta\wR_{1k}^n+\delta\wR_{2k}^n\right)ds
\\ & -\mathcal{E}_2 \int_t^n\int_s^n
\left(\mathcal{A}_N^n-\sum_{k=1}^N\delta\widetilde{\mathcal{A}}_k^n\right)^{-1}
\sum_{k=1}^N\left(\delta\wR_{1k}^n+\delta\wR_{2k}^n\right)d\tau ds,
  \end{split}
\end{equation}
where 
$\mathcal{E}_1$ and $\mathcal{E}_2$ are $2N\times 2N$ matrices such that
$$\mathcal{E}_1=\diag(\mathbf{p_1},\dots,\mathbf{p_1}),
\enskip \mathcal{E}_2=\eps^3\diag(\mathbf{p_2},\dots,\mathbf{p_2}),\enskip
\mathbf{p_1}=\begin{pmatrix} \eps^3 & 0 \\ 0 & -1\end{pmatrix},\enskip
\mathbf{p_2}=\begin{pmatrix} 0 & 0 \\ 1 & 0\end{pmatrix}.$$
Combining Lemmas \ref{lem:parametersupb}, \ref{lem:speed-Hamiltonian},
\ref{lem:virialv2k} and \ref{lem:vNXN}, we obtain the following.
\begin{proposition}
  \label{prop:vNupb}
 Let $c_{i,+}$ be as in Theorem \ref{thm:1}. Then there exist positive constants
$\varepsilon_0$, $\delta$, $T$ and $A$
such that for every $k=1, \cdots, N$, $t\in[T,n]$ and $n>T$,
\begin{gather*}
\|v_k^n\|_{X_k^n(t)^*}+\|v_k^n\|_{W^n(t)}+\|v_k^n\|_{l^2}\le A\eps^{\frac32}e^{-k_1\eps d(t)},
\\
|c_k^n(t)-c_{k,+}|+\eps^3|\gamma_k^n(t)-\gamma_{k,+}|\le A\eps^2e^{-k_1\eps d(t)}.
\end{gather*}
\end{proposition}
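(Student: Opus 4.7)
The plan is a backward-in-time bootstrap/continuity argument that consolidates the four preceding lemmas. I would set
$$T_1(n):=\inf\bigl\{\tau\in[T,n]\,:\,\text{the three bounds of the proposition hold on }[\tau,n]\text{ with constant }A\bigr\}.$$
Since $v_k^n(n)=0$, $c_k^n(n)=c_{k,+}$ and $\gamma_k^n(n)=\gamma_{k,+}$, all three bounds hold at $t=n$ with room to spare, so continuity gives $T_1(n)<n$. The key is to choose $\varepsilon_0$ small, $L_0$ large, and $A$ large \emph{in this order} so that on $[T_1(n),n]$ the bounds actually improve to $A/2$; the usual contrapositive then forces $T_1(n)=T$ uniformly in $n\ge n_0$.

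On $[T_1(n),n]$ the bootstrap hypotheses of Lemmas \ref{lem:parametersupb}--\ref{lem:vNXN} are automatic: choosing $L_0$ large makes $A(\varepsilon^{3/2}+\varepsilon^2)e^{-k_1L_0}\le\delta\varepsilon^2$, while the integral estimates \eqref{eq:diffx+xi}--\eqref{eq:diffc+ci} preserve the separation hypothesis $L\ge L_0$ up to an error of size $\varepsilon^{-1}e^{-k_1L_0}$. I would then proceed in four steps. \textbf{Step 1.} Invoke Lemma \ref{lem:vNXN} to get $\|v_i^n(t)\|_{X_k^n(t)^*}\le A_i\varepsilon^{3/2}e^{-k_1\varepsilon d(t)}$ for every $1\le i,k\le N$, with $A_i$ depending only on $\varepsilon_0,L_0,\delta$ and not on the outer constant $A$. \textbf{Step 2.} The pointwise comparison $e^{-k_1\varepsilon|x-x_j^n|}\le e^{k_1\varepsilon(x-x_j^n)}$ gives $\|v\|_{W^n(t)}\le\sum_{j=1}^N\|v\|_{X_j^n(t)^*}$, so Step 1 yields $\|v_i^n\|_{W^n(t)}\lesssim\varepsilon^{3/2}e^{-k_1\varepsilon d(t)}$. \textbf{Step 3.} Feed this into \eqref{eq:xcupb} to obtain $|\dot c_k^n|\lesssim\varepsilon^5e^{-k_1\varepsilon d(t)}$ and $|\dot x_k^n-c_k^n|\lesssim\varepsilon^2e^{-k_1\varepsilon d(t)}$; integrating backward from $n$ against $c_k^n(n)=c_{k,+}$ and $x_k^n(n)=x_{k,+}(n)$, and using $\int_t^n e^{-k_1\varepsilon d(s)}\,ds\lesssim\varepsilon^{-3}e^{-k_1\varepsilon d(t)}$ exactly as in \eqref{eq:diffx+xi}--\eqref{eq:diffc+ci}, produces $|c_k^n(t)-c_{k,+}|+\varepsilon^3|\gamma_k^n(t)-\gamma_{k,+}|\lesssim\varepsilon^2e^{-k_1\varepsilon d(t)}$. \textbf{Step 4.} For the $l^2$ bound, apply the Hamiltonian-energy inequality \eqref{eq:locenorm}; every term on its right-hand side is now controlled by Steps 1--3, the integrated norm $\|v_i^n\|_{L^2(t,n;X_i^n(s)^*\cap W^n(s))}^2\lesssim e^{-2k_1\varepsilon d(t)}$ coming from time-integrating the pointwise bounds of Steps 1--2, and Lemma \ref{lem:virialv2k} providing an independent check via \eqref{eq:locdwform} on the mixed $L^2_t W^n$ norm of $w_k^n$.

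The hard part is not any single inequality — all the heavy analysis (linearised FPU semigroup, modulation equations, energy and virial identities) has already been packaged into the four preceding lemmas. Rather, the delicate point is the ordered choice of constants $\varepsilon_0$, $L_0$, $A$: the $A_i$ produced by Lemma \ref{lem:vNXN} must fit within $A/2$; the separation loss $\varepsilon^{-1}e^{-k_1L_0}$ must not violate $L\ge L_0$ for the other lemmas; and the bootstrap assumption $\sup_{[T_1(n),n]}(\|v_k^n\|_{X_k^n(t)^*}+\|v_k^n\|_{l^2}+|c_k^n-c_{k,+}|)\le\delta\varepsilon^2$ must remain strict at every intermediate stage. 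The inductive coupling of $v_k^n$ to $v_1^n,\ldots,v_{k-1}^n$ through $l_k^n$ and $R_k^n$ is already absorbed by the inner induction in Lemma \ref{lem:vNXN}, so no additional finite induction on $k$ is required at this outer level.
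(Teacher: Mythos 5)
Your overall skeleton---a backward continuity argument on $[T_1(n),n]$, feeding the bootstrap hypotheses into Lemmas \ref{lem:parametersupb}--\ref{lem:vNXN} and recovering strictly better constants so that $T_1(n)=T$ uniformly in $n$---is the same as the paper's. The genuine gap is in Steps 1--2. Lemma \ref{lem:vNXN} controls only $\|v_k^n(t)\|_{X_k^n(t)^*}$, i.e.\ the norm of $v_k^n$ against the weight $e^{k_1\eps(x-x_k^n(t))}$ centered at its \emph{own} soliton (the index clash in \eqref{eq:vkXk} is a typo: the proof estimates $\|v_k^n\|_{X_k^n(t)^*}$ by induction on $k$ and nothing else). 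It does not give $\|v_i^n\|_{X_j^n(t)^*}$ for $j<i$, and no such bound can be expected: near $x_i^n$ the weight $e^{k_1\eps(x-x_j^n)}$ has size $e^{k_1\eps(x_i^n-x_j^n)}\ge e^{k_1\eps d(t)}$, while the only a priori information on the mass of $v_i^n$ at and behind its soliton is the $O(\delta\eps^2)$ bootstrap bound. Hence your comparison $\|v_i^n\|_{W^n(t)}\le\sum_{j}\|v_i^n\|_{X_j^n(t)^*}$, although correct pointwise, does not yield $\|v_i^n\|_{W^n(t)}\lesssim\eps^{\frac32}e^{-k_1\eps d(t)}$ for $i\ge2$: the summands with $j<i$ (weights sitting at the slower solitons behind the $i$-th one) are exactly the part of $v_i^n$ that the weighted linear stability cannot see. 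This is precisely why the paper needs Lemma \ref{lem:speed-Hamiltonian} (Hamiltonian expansion for the $l^2$ norms, hence for $W^n(t)$ via $\|v\|_{W^n(t)}\lesssim\|v\|_{l^2}$) and Lemma \ref{lem:virialv2k} (virial/monotonicity, through $\la\psi_{a,N}w_k^n,w_k^n\ra$ and the $L^2$-in-time $W^n$ norms) as essential ingredients, whereas you invoke them only as an ``independent check.''

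Because Steps 3--4 feed the unproved $W$-bound into \eqref{eq:xcupb} and \eqref{eq:locenorm}, the chain does not close as written; note also that the right-hand side of \eqref{eq:locenorm} is \emph{linear} in $\|v_i^n\|_{W_k^n(t)}$, so without the independent localized-energy input your Step 4 is essentially circular. The paper's proof avoids all of this by bootstrapping only the weak hypotheses of Lemma \ref{lem:parametersupb} (the $\delta\eps^2$ smallness and the separation $\ge L_0$ on $[T_n,n]$) and then deriving \eqref{eq:vbn}--\eqref{eq:cgbn2} from the four lemmas used \emph{simultaneously}: Lemma \ref{lem:vNXN} for the $X_k^n(t)^*$ component, Lemma \ref{lem:virialv2k} via \eqref{eq:locdwform} for the localized component, Lemma \ref{lem:speed-Hamiltonian} for $l^2$, and Lemma \ref{lem:parametersupb} for $(c_k^n,\gamma_k^n)$, after which the derived bounds are far below the bootstrap thresholds and $T_n$ can be taken independent of $n$. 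To repair your argument, replace Step 2 by this combination; the remaining elements of your outline (continuity at $t=n$, the ordered choice of $\eps_0$, $L_0$, $A$, and the integration in Step 3) are consistent with the paper.
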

\begin{proof}
Suppose $n_0\in\N$ is sufficiently large. Then for $n\ge n_0$,
there exists a $T_n<n$ such that
\begin{gather*} \sup_{t\in[T_n,n]}\left\{|c_i^n(t)-c_{i,+}|
+\sum_{k=1}^N(\|v_k^n(t)\|_{X_k^n(t)^*}+\|v_k^n(t)\|_{l^2})
\right\}\le \delta\eps^2,
\\ \inf_{t\in[T_n,n]}\min_{2\le i\le N}\eps(x_{i}^n(t)-x_{i-1}^n(t))\ge L_0.
\end{gather*}
Lemmas \ref{lem:parametersupb}, \ref{lem:speed-Hamiltonian},
\ref{lem:virialv2k} and \ref{lem:vNXN}  imply that for $t\in[T_n,n]$
and $n\ge n_0$,
\begin{gather}
\label{eq:vbn}
\sum_{k=1}^N(\|v_k^n\|_{X_k^n(t)^*}+\|v_k^n\|_{W^n(t)}+\|v_k^n\|_{l^2})\le
C\eps^{\frac32}e^{-k_1\eps d(t)},\\
\label{eq:cgbn}
\sum_{k=1}^N(|\dot{c}_k^n(t)|+\eps^3|\dot{\gamma}_k^n(t)|)\le
C\eps^2e^{-k_1\eps d(t)},
\end{gather}
where $C$ is a positive constant independent of $n\ge n_0$.
By \eqref{eq:cgbn}, there exists a positive constant $C'$ such that
\begin{equation}
  \label{eq:cgbn2}
\sup_{t\in[T_n,n]}(|c_k^n(t)-c_{k,+}|+\eps^3|\gamma_k^n(t)-\gamma_{k,+}|)
\le C'\eps^2e^{-k_1\eps d(T_n)}\quad\text{for every $n\ge n_0$.}
\end{equation}
In view of \eqref{eq:vbn} and \eqref{eq:cgbn2}, we see that $T_n$ can
be chosen independently of $n\ge n_0$. This completes the proof of
Proposition \ref{prop:vNupb}.
\end{proof}

\section{Existence of $N$-soliton like solutions}
\label{sec:existence}
In this section, we will show that as $n\to\infty$, a solution $u^n(t)$ of
\eqref{eq:un} converges to an $N$-soliton state, that is,
a solution of \eqref{eq:FPU} that tends to a sum of $N$-solitary waves as
$t\to\infty$.
\par
To begin with, we introduce several notations. Let
\begin{gather*}
\|u\|_{Y_n(t)}:=\sup_{s\in[t,n]}e^{\frac{k_1\eps d(s)}2}\|u(s)\|_{l^2},
\quad 
\|u\|_{Z_{k,n}(t)}:=\sup_{s\in[t,n]}e^{\frac{k_1\eps d(s)}2}\|u(s)\|_{X_k^n(s)^*},\\
\dc_k^{m,n}(t)=c_k^m(t)-c_k^n(t),\quad \dg_k^{m,n}(t)=\gamma_k^m(t)-\gamma_k^n(t),\\
\dv_k^{m,n}(t)=v_k^m(t)-v_k^n(t),\quad \dw_k^{m,n}(t)=w_k^m(t)-w_k^n(t),\\
\du_k^{m,n}(t)=u_{c_k^m(t)}(\cdot-x_k^m(t))-u_{c_k^n(t)}(\cdot-x_k^n(t)),
\quad \dU_k^{m,n}(t)=U_k^m(t)-U_k^n(t).
\end{gather*}
First, we will prove that $\{c_k^n\}_{n=1}^\infty$ and $\{\gamma_k^n\}_{n=1}^\infty$
are Cauchy sequences assuming that $\{v_k^n\}$ is a Cauchy sequence in 
a weighted space.
\begin{lemma}
  \label{lem:cauchyxc}
There exist an $n_0\in\N$ and positive constants $C$ and $T$ such that for any
$m\ge n\ge n_0$ and  $t\in [T,n]$,
\begin{align*}
& \sup_{s\in[t,n]}e^{\frac12k_1\eps d(s)}(|\dc^{m,n}_k(s)|+\eps^3|\dg^{m,n}_k(s)|)
\\ \le  & C\eps^2e^{-\frac12k_1\eps d(n)}+
C\eps^{\frac12}\left(\sum_{i=1}^{k-1}\|\dv_i^{m,n}\|_{Y_n(t)}
+e^{-k_1\eps d(t)}\sum_{i=1}^N(\|\dv_i^{m,n}\|_{Y_n(t)}+\|\dv_i^{m,n}\|_{Z_{i,n}(t)})\right).
\end{align*}
\end{lemma}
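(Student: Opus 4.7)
The plan is to subtract the integrated modulation identity \eqref{eq:modeq2}, written once for $u^m$ and once for $u^n$, to produce an integral equation for $(\delta c_k^{m,n}, \delta\gamma_k^{m,n})$ and close it by a Gronwall argument in the weighted sup-norm $\sup_{s\in[t,n]} e^{\frac{1}{2}k_1\eps d(s)}|\cdot|$. Since $c_k^n(n)=c_{k,+}$ and $\gamma_k^n(n)=\gamma_{k,+}$, one has for $s\in[t,n]$
\[
\delta c_k^{m,n}(s) = \bigl(c_k^m(n)-c_{k,+}\bigr) - \int_s^n(\dot c_k^m - \dot c_k^n)(\tau)\,d\tau,
\]
and analogously for $\delta\gamma_k^{m,n}$. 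Proposition \ref{prop:vNupb} gives $|c_k^m(n)-c_{k,+}|+\eps^3|\gamma_k^m(n)-\gamma_{k,+}|\le A\eps^2e^{-k_1\eps d(n)}$, so after multiplying by the weight $e^{\frac{1}{2}k_1\eps d(s)}$ and using $d(s)\le d(n)$, the boundary piece contributes the $C\eps^2 e^{-\frac{1}{2}k_1\eps d(n)}$ term in the stated inequality.

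For the interior piece, apply \eqref{eq:modeq1} to both $u^m$ and $u^n$ and subtract. The quantity $\vec z^m-\vec z^n$ with $\vec z^n = (\eps^{-3}\dot c_i^n, c_i^n-\dot x_i^n)_{i=1,\dots,N}$ then satisfies a linear system whose coefficient matrix is $\mathcal{A}_N^n-\sum_k\delta\widetilde{\mathcal{A}}_k^n$ and whose right hand side is formed by the differences between $m$ and $n$ of $\mathcal{A}_N$, $\delta\widetilde{\mathcal{A}}_k$, $\delta\widetilde R_{1k}$, and $\delta\widetilde R_{2k}$. Expanding each such difference by the mean value theorem in the parameters decomposes the right hand side into (i) terms linear in $\delta v_i^{m,n}$ coming from the $v_k$-dependence of $\delta R_{1k}$ and $\delta R_{2k}$, and (ii) feedback terms linear in $\delta c_i^{m,n}, \delta\gamma_i^{m,n}$ arising from the parameter-dependence of $\mathcal{A}_N^n$, the projections $P_k^n$, and the profiles $u_{c_i^n}(\cdot-x_i^n)$.

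To turn this into the asserted bound, invert $\mathcal{A}_N^n-\sum_k\delta\widetilde{\mathcal{A}}_k^n$ via the lower-triangular decomposition \eqref{eq:Aij}: its diagonal blocks $B_1(c_i^n)$ are invertible of size $O(\eps^{-1})$, while its off-diagonal entries decay like $e^{-k_1\eps d(s)}$ or faster. Consequently, the $k$-th block of $\vec z^m-\vec z^n$ inherits a structure in which $\delta v_i^{m,n}$ with $i<k$ enters through inner products localized near the corresponding solitons, producing the $\eps^{1/2}\sum_{i<k}\|\delta v_i^{m,n}\|_{Y_n(t)}$ summand without any exponential factor, whereas $\delta v_i^{m,n}$ with $i\ge k$ is paired against the exponentially weighted test functions $J^{-1}\pd_xu_{c_i^n}, J^{-1}\pd_cu_{c_i^n}$, acquiring an $e^{-k_1\eps d(s)}$ prefactor and producing the second sum involving $\|\cdot\|_{Z_{i,n}(t)}$. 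Feedback contributions from $\delta c_i,\delta\gamma_i$ likewise acquire an $e^{-k_1\eps d(s)}$ factor, so for $\eps$ small and $L_0$ large they can be absorbed into the left hand side by a standard contraction. Integrating in $\tau\in[s,n]$ and taking the weighted supremum in $s$ closes the estimate.

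The main obstacle is bookkeeping: one must track precisely which test function each component of $\delta v_i^{m,n}$ is contracted with, in order to separate the unweighted sum $\sum_{i<k}\|\cdot\|_{Y_n(t)}$ from the exponentially damped sum $\sum_i(\|\cdot\|_{Y_n(t)}+\|\cdot\|_{Z_{i,n}(t)})$. This essentially transcribes the triangular structure already exploited in Lemmas \ref{lem:parametersupb}--\ref{lem:vNXN}, but now for differences between two solutions. A related technical point is the expansion $\delta u_k^{m,n} = u_{c_k^m}(\cdot-x_k^m)-u_{c_k^n}(\cdot-x_k^n) = O(|\delta c_k^{m,n}|+\eps^3|\delta\gamma_k^{m,n}|)$ in the exponentially weighted norms $X_k^n(s)^*$ and $W^n(s)$, which is what allows the feedback contributions to close with a small constant rather than generating secular growth.
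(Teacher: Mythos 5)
Your proposal is correct and follows essentially the same route as the paper's proof: subtract the integrated modulation system \eqref{eq:modeq2} for $u^m$ and $u^n$, bound the data at $s=n$ by Proposition \ref{prop:vNupb}, expand the differences of $\mathcal{A}_N^n$, $\delta\widetilde{\mathcal{A}}_k^n$, $\delta\wR_{1k}^n$, $\delta\wR_{2k}^n$ via mean-value/profile-difference estimates (the paper's \eqref{eq:ucdiff}), exploit the near lower-triangular structure \eqref{eq:Aij}, and close by absorbing the exponentially damped feedback in the weighted sup-norm after integrating on $[s,n]$. The only slight imprecision is attributional: in the paper the $e^{-k_1\eps d}$ prefactor on the $\sum_{i=1}^N$ terms comes from the smallness (via Proposition \ref{prop:vNupb}) of the factors multiplying $\dv_i^{m,n}$ --- such as $\|v_k^n\|$, $\|R_k^n\|_{l^1}$, $\eps^{-3}\dot c^m$, and soliton-tail products --- while the pairing against $J^{-1}\pd_xu_{c_i^n}$, $J^{-1}\pd_cu_{c_i^n}$ only explains why the $Z_{i,n}$-norms are needed, and the unweighted $\sum_{i<k}$ terms stem from the $R_{k3}$-type residuals $u_{c_j}\dw_{j-1}^{m,n}$; this bookkeeping difference does not affect the validity of your plan.
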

\begin{proof}
By the definition, $|\delta R_{1k}^m-\delta R_{1k}^n|\lesssim I_{k}+II_{k}$,
where
\begin{align*}
I_{k}= & \|R_{k}^n\|_{l^1}\sum_{i=1}^k(\eps^{-4} \|\pd_x\du_i^{m,n}\|_{l^\infty} 
+\eps^{-1}\|\pd_cu_{c_i^m}(\cdot-x_i^m)-\pd_cu_{c_i^n}(\cdot-x_i^n)\|_{l^\infty}),\\
II_{k}=& \sum_{j=1}^3\|R_{kj}^m-R_{kj}^n\|_{l^1}\sum_{i=1}^k
\left(\eps^{-4}\|\pd_xu_{c_i^n}(\cdot-x_i^n)\|_{l^\infty}
+\eps^{-1}\|\pd_cu_{c_i^n}(\cdot-x_i^n)\|_{l^\infty}\right).
\end{align*}
By Claim \ref{cl:ucsize}, we have
\begin{equation}
  \label{eq:ucdiff}
|\pd_x^\alpha\pd_c^\beta u_{c_i^m}(\cdot-x_i^m)
-\pd_x^\alpha\pd_c^\beta u_{c_i^n}(\cdot-x_i^n)|
\lesssim \eps^{\alpha-2\beta}(|\dc_i^{m,n}|+\eps^3|\dg^{m,n}_i|)
e^{-\frac32k_i|\cdot-x_i^n|}.
\end{equation}
In view of Proposition \ref{prop:vNupb}, \eqref{eq:rkn} and
\eqref{eq:ucdiff}, we have
\begin{equation}
\label{eq:lemxc1}
I_k\lesssim e^{-k_1\eps d(t)}
\sum_{i=1}^N\left(|\dc^{m,n}_i(t)|+\eps^3|\dg^{m,n}_i(t)|\right).
\end{equation}
\par
\par
Next we will estimate $II_{k}$.
By the mean value theorem, we have
$$R_{k1}^n=\int_0^1\int_0^1H'''(U_k^n+\theta_2(w_{k-1}^n+\theta_1v_k^n))
v_k^n(w_{k-1}^n+\theta_1v_k^n) d\theta_1d\theta_2,$$
and it follows that
\begin{equation}
  \label{eq:rk1dif}
  \begin{split}
|R_{k1}^m-R_{k1}^n| \lesssim
 & |v_k^n|(|\dw^{m,n}_{k-1}|+|\dv_k^{m,n}|) +|\dv_k^{m,n}|(|w_{k-1}^m|+|v_k^m|)
\\ & +|v_k^n|(|w_{k-1}^n|+|v_k^n|)|\dU_k^{m,n}|.    
  \end{split}
\end{equation}
Similarly, we have
\begin{align}
\label{eq:rk2dif}
|R_{k2}^m-R_{k2}^n|\lesssim &
|\dU_{k-1}^{m,n}||u_{c_k}^m|+|U_{k-1}^n||\du_k^{m,n}|,
\\ \label{eq:rk3dif} 
|R_{k3}^m-R_{k3}^n|\lesssim & |u_{c_k^m}||\dw^{m,n}_{k-1}|+|w_{k-1}^n ||\du_k^{m,n}|
+|w_{k-1}^m||u_{c_k}^m||\dU_{k-1}^{m,n}|.
\end{align}
Substituting \eqref{eq:ucdiff} into the above and using Proposition
\ref{prop:vNupb}, we obtain
\begin{align*}
& \|R_{k1}^m-R_{k1}^n\|_{l^1}\\ \lesssim  & 
\sum_{i=1}^k(\|v_i^m\|_{l^2}+\|v_i^n\|_{l^2})\sum_{i=1}^k\|\dv_i^{m,n}\|_{l^2}
+\sum_{i=1}^k\|v_i^n\|_{l^2}^2\sum_{i=1}^k(|\dc^{m,n}_i|+\eps^3|\dg^{m,n}_i|)
\\ \lesssim &
e^{-k_1\eps d(t)}\sum_{i=1}^k\left\{\eps^{\frac32}\|\dv_i^{m,n}\|_{l^2}
+\eps^3(|\dc^{m,n}_i|+\eps^3|\dg^{m,n}_i|)\right\},
\end{align*}
\begin{align*}
\|R_{k2}^m-R_{k2}^n\|_{l^1}\lesssim & 
\eps e^{-k_1\eps d(t)}\sum_{i=1}^k(|\dc^{m,n}_i|+\eps^3|\dg^{m,n}_i|),\\
\|R_{k3}^m-R_{k3}^n\|_{l^1}\lesssim &
\eps^{\frac32}\sum_{i=1}^{k-1}\|\dv_i^{m,n}\|_{l^2}
+\eps e^{-k_1\eps d(t)}\sum_{i=1}^{k}(|\dc^{m,n}_i|+\eps^3|\dg^{m,n}_i|).
\end{align*}
Combining the above, we obtain
\begin{align}
  \label{eq:lemxc2}
& II_{k} \lesssim \eps^{\frac12}\sum_{i=1}^{k-1}\|\dv_i^{m,n}\|_{l^2}
+e^{-k_1\eps d(t)}\sum_{i=1}^k(|\dc^{m,n}_i|+\eps^3|\dg^{m,n}_i|).
\end{align}
It follows from Proposition \ref{prop:vNupb} and \eqref{eq:ucdiff} that
\begin{equation}
  \label{eq:lemxc4}
  \begin{split}
& |\delta R_{2k}^m-\delta R_{2k}^n| \\  \lesssim &
\eps^{\frac12}e^{-k_1\eps d(t)}\|\dv_k^{m,n}\|_{l^2}
+\|v_k^n\|_{l^2}\sum_{j=1}^k(\|u_{c_j^m}\|_{l^\infty}+\|u_{c_j^n}\|_{l^\infty})
\\ & \quad \times
\sum_{i=1}^k(\eps^{-4}\|\pd_x\du_i^{m,n}\|_{l^2}+\eps^{-1}\|\pd_cu_{c_i^m}(\cdot-x_i^m)
-\pd_cu_{c_i^n}(\cdot-x_i)\|_{l^2})
\\  \lesssim & e^{-k_1\eps d(t)}
\left(\eps^{\frac12}\|\dv_k^{m,n}\|_{l^2}+\sum_{i=1}^k(|\dc^{m,n}_i|+\eps^3|\dg^{m,n}_i|)\right),
  \end{split}
\end{equation}
and 
\begin{equation}
  \label{eq:lemxc5}
  \begin{split}
& \left|\mathcal{A}_N^m-\mathcal{A}_N^n\right| +\sum_{k=1}^N
\left|\delta\widetilde{\mathcal{A}}_k^m-\delta\widetilde{\mathcal{A}}_k^n\right|
\\  \lesssim & \sum_{k=1}^N \left(\eps|\dg^{m,n}_k|
+\eps^{-2}|\dc^{m,n}_k|+\eps^{-\frac32}\|\dv_k^{m,n}\|_{X_k^n(t)^*}\right).
 \end{split}
\end{equation}
Combining \eqref{eq:modeq2}, 
\eqref{eq:lemxc2}, \eqref{eq:lemxc2}--\eqref{eq:lemxc5}, we have
\begin{equation}
  \label{eq:dotx-cdif}
  \begin{split}
& \left|\pd_t\dg^{m,n}_k(t)-\dc^{m,n}_k(t)\right|+\eps^{-3}\left|\pd_t\dc^{m,n}_k(t)\right|
 \lesssim  e^{-k_1\eps d(t)}\sum_{i=1}^N(|\dc^{m,n}_i|+\eps^3|\dg^{m,n}_i|)
\\ & \quad +\eps^{\frac12}\left\{\sum_{i=1}^{k-1}\|\dv_i^{m,n}\|_{l^2}
+e^{-k_1\eps d(t)}\sum_{i=1}^N\|\dv_i^{m,n}\|_{X_i^n(t)^*\cap l^2}\right\}.
\end{split}
\end{equation}
Integrating the above on $[t,n]$ and using
$\eps^3\int_t^ne^{-k_1\eps d(s)}ds \lesssim e^{-k_1\eps d(t)}$, we have
\begin{align*}
& |\dc^{m,n}_k(t)|+\eps^3\left|\dg^{m,n}_k(t)-\int_n^t\dc^{m,n}_k(s)ds\right|
\\ \lesssim & |\dc^{m,n}_k(n)|+\eps^3|\dg^{m,n}_k(n)|+
e^{-k_1\eps d(t)}\sup_{s\in[t,n]} (|\dc^{m,n}_i(s)|+\eps^3|\dg^{m,n}_i(s)|)
\\ & +\eps^{\frac12}e^{-\frac12k_1\eps d(t)}\left(\sum_{i=1}^{k-1}\|\dv_i^{m,n}\|_{Y_n(t)}
+e^{-k_1\eps d(t)}\sum_{i=1}^N\|\dv_i^{m,n}\|_{Y_n(t)\cap Z_{i,n}(t)}\right).
\end{align*}
Now Lemma \ref{lem:cauchyxc} follows immediately from the above since
 $$|\dc^{m,n}_k(n)|+\eps^3|\dg^{m,n}_k(n)|\lesssim \eps^2e^{-k_1\eps d(n)}$$
by Proposition \ref{prop:vNupb}.
\end{proof}

Next we will show that $\{v_k^n\}_{n=1}^\infty$ is a Cauchy sequence in
exponentially weighted spaces if it is a Cauchy sequence in the energy space.
\begin{lemma}
  \label{lem:vkxkdif}
There exist positive constants $C$, $b$, $T$ and an $n_0\in\N$ such that
for any $m\ge n\ge n_0$ and  $t\in [T,n]$,
\begin{align*}
 \|\dv_k^{m,n}\|_{Z_{k,n}(t)} \le & C\eps^{\frac32}e^{-\frac12k_1\eps d(n)}
+C\left(\sum_{i=1}^{k-1}\|\dv_i^{m,n}\|_{Y_n(t)}
+\eps^{-\frac12}e^{-k_1\eps d(t)}\sum_{i=1}^{N}\|\dv_i^{m,n}\|_{Y_n(t)}\right).
\end{align*}
\end{lemma}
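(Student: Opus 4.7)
The plan is to mirror the proof of Lemma \ref{lem:vNXN}, applied to the difference $\dv_k^{m,n}$ and integrated backward from $t=n$. Subtracting the equation \eqref{eq:vk} for $v_k^n$ from the one for $v_k^m$ and centering the linearized operator at $U_k^n$, we get
\begin{equation*}
\pd_t\dv_k^{m,n}=JH''(U_k^n)\dv_k^{m,n}+F_{k,1}^{m,n}+Q_k^nJ(R_k^m-R_k^n),
\end{equation*}
where $F_{k,1}^{m,n}=J(H''(U_k^m)-H''(U_k^n))v_k^m+(l_k^m-l_k^n)+(Q_k^m-Q_k^n)JR_k^m$. Because $v_k^n(n)=0$, the initial data at time $n$ is $\dv_k^{m,n}(n)=v_k^m(n)$, and Proposition \ref{prop:vNupb} yields $\|v_k^m(n)\|_{X_k^n(n)^*}\lesssim\eps^{3/2}e^{-k_1\eps d(n)}$, which will generate the first term on the right-hand side of the claimed bound.

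To apply Lemma \ref{lem:linearstability} with $t_1=n$, the near-orthogonality condition \eqref{eq:orth3'} must be verified for $\dv_k^{m,n}$. Since $v_k^m$ is symplectically orthogonal to modes centered at $(c_i^m,x_i^m)$ and $v_k^n$ to modes centered at $(c_i^n,x_i^n)$, the defect for $\dv_k^{m,n}$ is controlled by $|\dc_i^{m,n}|+\eps^3|\dg_i^{m,n}|$ through the pointwise bound \eqref{eq:ucdiff}. Using Lemma \ref{lem:cauchyxc} to re-express the modulation differences by $\dv_i^{m,n}$ terms, this defect is absorbed into the forcing and \eqref{eq:orth3'} is ensured up to terms already of the desired structure.

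The forcing pieces are then estimated in $X_k^n(s)^*$. The decompositions \eqref{eq:rk1dif}--\eqref{eq:rk3dif}, together with Proposition \ref{prop:vNupb} and \eqref{eq:ucdiff}, give a bound of the schematic form
\begin{equation*}
\|R_k^m-R_k^n\|_{X_k^n(s)^*}\lesssim\eps^2\delta\|\dv_k^{m,n}\|_{X_k^n(s)^*}+\eps^{3/2}\sum_{i<k}\|\dv_i^{m,n}\|_{l^2}+\eps^{7/2}e^{-k_1\eps d(s)}\sum_{i=1}^N(|\dc_i^{m,n}|+\eps^3|\dg_i^{m,n}|),
\end{equation*}
and analogous estimates hold for $l_k^m-l_k^n$ via \eqref{eq:orthvk3}, for $(Q_k^m-Q_k^n)JR_k^m$ using smoothness of the projection in its parameters, and for $(H''(U_k^m)-H''(U_k^n))v_k^m$ directly from \eqref{eq:ucdiff}. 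Feeding these into Lemma \ref{lem:linearstability}, converting $\dc^{m,n}$, $\dg^{m,n}$ via Lemma \ref{lem:cauchyxc} into $\|\dv_i^{m,n}\|_{Y_n}$ contributions, and then taking $\sup_{s\in[t,n]}e^{k_1\eps d(s)/2}\|\cdot\|_{X_k^n(s)^*}$, the self-term $\|\dv_k^{m,n}\|_{Z_{k,n}(t)}$ picks up a prefactor $\sim\delta+e^{-k_1L}$ (exactly as in \eqref{eq:<n}) and can be absorbed.

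The argument closes by induction on $k$: for $k=1$, the term $w_{k-1}^n=0$ vanishes and the estimate is immediate; the inductive step uses the bound for $i<k$ to control the $\dw_{k-1}^{m,n}$ pieces that appear in $R_{k1}^m-R_{k1}^n$ and $R_{k3}^m-R_{k3}^n$. The main obstacle is the careful bookkeeping of the orthogonality defect and the propagation of smallness across the hierarchy in $k$, arranging the estimates so that only $\sum_{i<k}\|\dv_i^{m,n}\|_{Y_n(t)}$ together with the extra-decay contribution $\eps^{-1/2}e^{-k_1\eps d(t)}\sum_{i=1}^N\|\dv_i^{m,n}\|_{Y_n(t)}$ remain on the right-hand side, exactly as in the statement.
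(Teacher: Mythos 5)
Your overall strategy (treat $\dv_k^{m,n}$ as a solution of the equation linearized at $U_k^n$ with forcing, bound the forcing via \eqref{eq:rk1dif}--\eqref{eq:rk3dif}, Proposition \ref{prop:vNupb}, \eqref{eq:ucdiff} and Lemma \ref{lem:cauchyxc}, and invoke Lemma \ref{lem:linearstability} backward from $t=n$) is the right skeleton, but there is a genuine gap at the step where you apply Lemma \ref{lem:linearstability} \emph{directly} to $\dv_k^{m,n}$. The hypothesis \eqref{eq:orth3'} requires the symplectic-orthogonality defect of the solution to be bounded by $\delta_2\|\dv_k^{m,n}(t)\|_{X_k^n(t)^*}$, i.e.\ small \emph{relative to the difference itself}. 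What you actually control is
\begin{equation*}
\bigl|\la \dv_k^{m,n},J^{-1}\pd_xu_{c_i^n}(\cdot-x_i^n)\ra\bigr|
=\bigl|\la v_k^m,J^{-1}\bigl(\pd_xu_{c_i^n}(\cdot-x_i^n)-\pd_xu_{c_i^m}(\cdot-x_i^m)\bigr)\ra\bigr|
\lesssim \|v_k^m\|_{X_k^n(t)^*}\bigl(|\dc_i^{m,n}|+\eps^3|\dg_i^{m,n}|\bigr),
\end{equation*}
which is small relative to $\|v_k^m\|$, not relative to $\|\dv_k^{m,n}\|$; since the whole point of the lemma is that $\dv_k^{m,n}$ is smaller than $v_k^m$ by a factor $e^{-\frac12 k_1\eps d(n)}$, there is no reason \eqref{eq:orth3'} holds for $\dv_k^{m,n}$. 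Moreover, ``absorbing the defect into the forcing'' is not an available move: \eqref{eq:orth3'} is a constraint on the solution of \eqref{eq:LFPU2}, not a forcing term, so a failure of it cannot be compensated by enlarging $F_1,F_2$.

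The paper's proof supplies exactly the missing ingredient: it splits $\dv_k^{m,n}=P_k^n\dv_k^{m,n}+Q_k^n\dv_k^{m,n}$, observes that $P_k^n\dv_k^{m,n}=(P_k^n-P_k^m)v_k^m$, which is estimated directly through the Lipschitz dependence of the projection on the modulation parameters (\eqref{eq:difpro}, giving \eqref{eq:difvmvnp}), and applies Lemma \ref{lem:linearstability} only to $Q_k^n\dv_k^{m,n}$, which is \emph{exactly} symplectically orthogonal to the modes at $(c_i^n,x_i^n)$ and hence satisfies \eqref{eq:orth3'} because $(c_i^n,x_i^n)$ is uniformly close to $(c_{i,+},x_{i,+})$. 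The price is the equation \eqref{eq:qdifv} for $Q_k^n\dv_k^{m,n}$, whose extra terms $\dot{Q}_k^n\dv_k^{m,n}-[JH''(U_k^n),Q_k^n]\dv_k^{m,n}$ (the terms $I_1$--$I_4$, requiring the bound \eqref{eq:daij} on $\frac{d}{dt}\mathcal{A}_{i,j}^n$) form a substantive part of the argument that your proposal omits entirely. Your estimates of $R_k^m-R_k^n$, $l_k^m-l_k^n$ and $(P_k^m-P_k^n)JR_k^m$, the treatment of the datum $v_k^m(n)$ at $t=n$, and the induction in $k$ are consistent with the paper, but without the $P$/$Q$ decomposition the central application of the linear stability lemma is unjustified.
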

By Lemmas \ref{lem:cauchyxc} and \ref{lem:vkxkdif}, we have the following.
\begin{corollary}
  \label{cor:ucdiff}
  \begin{align*}
& \sup_{s\in[t,n]}e^{\frac12k_1\eps d(s)}\|\dU_k^{m,n}(s)\|_{l^2} 
 \lesssim \eps^{-\frac12}\sup_{s\in[t,n]}e^{\frac12k_1\eps d(s)}
(|\dc^{m,n}_k(s)|+\eps^3|\dg^{m,n}_k(s)|)
\\ \lesssim & 
\eps^{\frac32}e^{-\frac12k_1\eps d(n)}+\sum_{i=1}^{k-1}\|\dv_i^{m,n}\|_{Y_n(t)}
+ \eps^{-\frac12}e^{-k_1\eps d(t)}\sum_{i=1}^N \|\dv_i^{m,n}\|_{Y_n(t)}.
  \end{align*}
\end{corollary}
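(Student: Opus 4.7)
The corollary consists of two inequalities chained together: the first is a geometric estimate expressing $\|\dU_k^{m,n}\|_{l^2}$ through the parameter differences $\dc_i^{m,n}$ and $\dg_i^{m,n}$; the second inserts the bound from Lemma \ref{lem:cauchyxc} (refined by Lemma \ref{lem:vkxkdif}) on those parameter differences.

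For the first inequality, I would write $\dU_k^{m,n}(t) = \sum_{i=1}^k \du_i^{m,n}(t)$ and note that $\dg_i^{m,n} = x_i^m - x_i^n$ since $\gamma_i^n(t) = x_i^n(t) - c_{i,+}t$. A one-parameter Taylor expansion along the linear path from $(c_i^n, x_i^n)$ to $(c_i^m, x_i^m)$ gives
\begin{equation*}
\du_i^{m,n} = \int_0^1 \bigl(\dc_i^{m,n}\,\pd_c u_{c_i(\theta)}(\cdot-x_i(\theta)) - \dg_i^{m,n}\,\pd_x u_{c_i(\theta)}(\cdot-x_i(\theta))\bigr)\,d\theta.
\end{equation*}
Property (P4) supplies the scaling $\|\pd_x u_c\|_{l^2} = O(\eps^{5/2})$ and $\|\pd_c u_c\|_{l^2} = O(\eps^{-1/2})$ uniformly for $c$ near each $c_{i,+}$ (one simply differentiates the KdV-rescaled $\sech^2$ profile of amplitude $\eps^2$ and width $\eps^{-1}$). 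Hence $\|\du_i^{m,n}\|_{l^2} \lesssim \eps^{-1/2}(|\dc_i^{m,n}| + \eps^3|\dg_i^{m,n}|)$; summing over $i \le k$ and multiplying by the weight $e^{\frac12 k_1\eps d(s)}$ before taking the supremum yields the first line (the $k$-th parameter difference on the right is understood as dominating the contributions of $i\le k$, since Lemma \ref{lem:cauchyxc} delivers the same form of bound for every index $i$).

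For the second inequality, I apply Lemma \ref{lem:cauchyxc} to each $|\dc_i^{m,n}|+\eps^3|\dg_i^{m,n}|$ with $i\le k$ and multiply by $\eps^{-1/2}$. The resulting expression matches the target except for an extra piece $\eps^{-1/2} e^{-k_1\eps d(t)}\sum_{i=1}^N\|\dv_i^{m,n}\|_{Z_{i,n}(t)}$ produced by the $Z$-norm contribution in Lemma \ref{lem:cauchyxc}. I substitute Lemma \ref{lem:vkxkdif} for each $\|\dv_i^{m,n}\|_{Z_{i,n}(t)}$: because $d(t)\ge \eps^{-1}L$ forces $e^{-k_1\eps d(t)}\lesssim e^{-k_1 L}$ to be arbitrarily small, every term produced either folds into the leading constant $\eps^{3/2} e^{-\frac12 k_1\eps d(n)}$ or into the existing $\sum_i\|\dv_i^{m,n}\|_{Y_n(t)}$ terms at the cost of a larger absolute constant.

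The main obstacle is not a single analytic step but the bookkeeping of the exponential weights $e^{-k_1\eps d(t)}$, $e^{-\frac12 k_1\eps d(n)}$ together with the prefactors $\eps^{\pm 1/2}$ and $\eps^{3/2}$ when the $Z$-norm piece from Lemma \ref{lem:cauchyxc} is purged via Lemma \ref{lem:vkxkdif}; one must verify that after this substitution no auxiliary norms stronger than $\|\dv_i^{m,n}\|_{Y_n(t)}$ survive. The scaling input from (P4) is exactly what makes the $\eps^{-1/2}$ prefactor in front of the parameter differences natural and ensures consistency of the weighted bounds across both inequalities.
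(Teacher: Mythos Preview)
Your proposal is correct and matches the paper's approach: the paper presents this corollary without proof, simply citing Lemmas~\ref{lem:cauchyxc} and~\ref{lem:vkxkdif}, which is precisely the two-step combination you carry out. The only cosmetic difference is that for the first inequality the paper would invoke Claim~\ref{cl:ucsize} (or the pointwise bound \eqref{eq:ucdiff}) rather than (P4) directly, but (P4) is the content underlying that claim, so your justification is equivalent.
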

\begin{proof}[Proof of Lemma \ref{lem:vkxkdif}]
In view of \eqref{eq:pkdef} and Lemma \ref{lem:cauchyxc},
\begin{equation}
  \label{eq:difpro}
  \begin{split}
\|P_k^m-P_k^n\|_{B(X_k^n(t)^*)} \lesssim & \sum_{i=1}^k(\eps^{-2}|\dc^{m,n}_i|+\eps|\dg^{m,n}_i|)
\\ \lesssim &  \eps^{-\frac32}e^{-\frac12k_1\eps d(t)} 
\left(\eps^{\frac32}e^{-\frac12k_1\eps d(n)}
+\sum_{i=1}^N \|\dv_i^{m,n}\|_{Y_n(t)\cap Z_{k,n}(t)}\right).
  \end{split}
\end{equation}
Thus by Lemma \ref{lem:vNXN}, we have
\begin{equation}
  \label{eq:difvmvnp}
  \begin{split}
\|P_k^n(t)\dv_k^{m,n}(t)\|_{X_k^n(t)^*}=&\|(P_k^n-P_k^m)v_k^m\|_{X_k^n(t)^*}
\\  \lesssim & e^{-\frac32k_1\eps d(t)}
\left(\eps^{\frac32}e^{-\frac12k_1\eps d(n)}+\sum_{i=1}^N \|\dv_i^{m,n}\|_{Y_n(t)\cap Z_{k,n}(t)}
\right).\end{split}
\end{equation}
\par

Now we will estimate $\|Q_k^n(t)\dv_k^{m,n}(t)\|_{X_k^n(t)^*}$ by using
Lemma \ref{lem:linearstability}.
By \eqref{eq:vk},
\begin{equation}
  \label{eq:qdifv}
  \pd_t(Q_k^n\dv_k^{m,n})=JH''(U_k^n)Q_k^n\dv_k^{m,n}+I+II+III+IV,
\end{equation}
where
\begin{align*}
I=& \dot{Q}_k^n\dv_k^{m,n}-[JH''(U_k^n),Q_k^n]\dv_k^{m,n},\\
II=& Q_k^nJ(H''(U_k^m)-H''(U_k^n))v_k^m,\\
III=& Q_k^n(l_k^m-l_k^n)=(P_k^m-P_k^n)l_k^m,\\
IV=& Q_k^n(Q_k^mJR_k^m-Q_k^nJR_k^n)
\\=& Q_k^nJ(R_k^m-R_k^n)-(P_k^m-P_k^n)P_k^mJR_k^m.
\end{align*}
We decompose as $I$ as $I=I_1+I_2+I_3+I_4$ and
\begin{align*}
I_1=& -(\eps^3\pd_cu_{c_j^n},\pd_xu_{c_j^n})_{j=1,\cdots,k\rightarrow}
\frac{d}{dt}(\mathcal{A}_k^n)^{-1}
\begin{pmatrix}
\eps^{-4}\la \dv_k^{m,n},J^{-1}\pd_xu_{c_i^n}\ra \\
\eps^{-1}\la \dv_k^{m,n},J^{-1}\pd_cu_{c_i^n}\ra
\end{pmatrix}_{i=1,\cdots,k \downarrow},
\\ I_2=&
(\eps^3\pd_cu_{c_j^n},\pd_xu_{c_j^n})_{j=1,\cdots,k\rightarrow}
(\mathcal{A}_k^n)^{-1} 
\begin{pmatrix} 
\eps^{-4}\la \dv_k^{m,n}, \Delta_{12}(i)-\Delta_{11}(i) \ra
 \\ \eps^{-1}\la \dv_k^{m,n}, \Delta_{22}(i)-\Delta_{21}(i) \ra
\end{pmatrix}_{i=1,\cdots,k \downarrow},
\\ I_3=&
J(\eps^3(\Delta_{22}(j)-\Delta_{21}(j)),
\Delta_{12}(j)-\Delta_{11}(j))_{j=1,\cdots,k\rightarrow}\\
& \qquad \times
(\mathcal{A}_k^n)^{-1} 
\begin{pmatrix}
\eps^{-4}\la \dv_k^{m,n},J^{-1}\pd_xu_{c_i^n}\ra \\
\eps^{-1}\la \dv_k^{m,n},J^{-1}\pd_cu_{c_i^n}\ra
\end{pmatrix}_{i=1,\cdots,k \downarrow},
\\ I_4=&
-(\eps^3\pd_cu_{c_j^n},\pd_xu_{c_j^n})_{j=1,\cdots,k\rightarrow}
\left(\mathcal{E}_2(\mathcal{A}_k^n)^{-1}+(\mathcal{A}_k^n)^{-1}\mathcal{E}_2
\right)
\begin{pmatrix}
\eps^{-4}\la \dv_k^{m,n},J^{-1}\pd_xu_{c_i^n}\ra \\
\eps^{-1}\la \dv_k^{m,n},J^{-1}\pd_cu_{c_i^n}\ra
\end{pmatrix}_{i=1,\cdots,k \downarrow},
\end{align*}  
where
\begin{align*}
& \Delta_{11}(i)=
\dot{c}_i^nJ^{-1}\pd_c\pd_xu_{c_i^n}+(c_i^n-\dot{x}_i^n)J^{-1}\pd_x^2u_{c_i^n},
\quad 
\Delta_{12}(i)=(H''(U_k^n)-H''(u_{c_i^n}))\pd_xu_{c_i^n},
\\
& \Delta_{21}(i)=
\dot{c}_i^nJ^{-1}\pd_c^2u_{c_i^n}+(c_i^n-\dot{x}_i^n)J^{-1}\pd_c\pd_xu_{c_i^n},
\quad \Delta_{22}(i)=(H''(U_k^n)-H''(u_{c_i^n}))\pd_cu_{c_i^n}.
\end{align*}
By \eqref{eq:xcupb}, we have
\begin{equation}
  \label{eq:daij}
  \frac{d}{dt}\mathcal{A}_{i,j}^n
=\begin{cases}
& O(\eps^3e^{-k_1\eps|x_i^n-x_j^n|}) \quad\text{if $i\ne j$,}\\
& O(\eps^3e^{-k_1\eps d(t)})\quad\text{if $i=j$.}
\end{cases}
\end{equation}
Indeed,
\begin{align*}
& \frac{d}{dt}\la \pd_cu_{c_i^n}(\cdot-x_i^n),J^{-1}\pd_cu_{c_j^n}(\cdot-x_j^n)\ra
\\=& 
\dot{c}_i^n\la \pd_c^2u_{c_i^n}(\cdot-x_i^n),J^{-1}\pd_cu_{c_j^n}(\cdot-x_j^n)\ra
+\dot{c}_j^n\la \pd_cu_{c_i^n}(\cdot-x_i^n),J^{-1}\pd_c^2u_{c_j^n}(\cdot-x_j^n)\ra
\\ &+(c_i^n-\dot{x}_i^n)
\la \pd_x\pd_cu_{c_i^n}(\cdot-x_i^n),J^{-1}\pd_cu_{c_j^n}(\cdot-x_j^n)\ra
\\ &+(c_j^n-\dot{x}_j^n)
\la \pd_cu_{c_i^n}(\cdot-x_i^n),J^{-1}\pd_x\pd_cu_{c_j^n}(\cdot-x_j^n)\ra
\\ & + \la \pd_cu_{c_i^n}(\cdot-x_i^n), 
H''(u_{c_j^n}(\cdot-x_j^n))\pd_cu_{c_j^n}(\cdot-x_j^n)\ra
\\ & -\la H''(u_{c_i^n}(\cdot-x_i^n))\pd_cu_{c_i^n}(\cdot-x_i^n),
\pd_cu_{c_j^n}(\cdot-x_j^n)\ra
\\ & +(c_i^n)^{-1}\la H'(u_{c_i^n}(\cdot-x_i^n)),\pd_cu_{c_j^n}(\cdot-x_j^n)\ra
-(c_j^n)^{-1}\la H'(u_{c_j^n}(\cdot-x_j^n)),\pd_cu_{c_i^n}(\cdot-x_i^n)\ra,
\end{align*}
and
$$\frac{d}{dt}\la \pd_cu_{c_i^n}(\cdot-x_i^n),J^{-1}\pd_cu_{c_j^n}(\cdot-x_j^n)\ra
=O(\eps e^{-k_1\eps|x_i^n-x_j^n|})$$
follows from \eqref{eq:xcupb}, Claim \ref{cl:intsize} and the fact that
$H''(u_c)-I=O(u_c)$.
We can compute other components of $\frac{d}{dt}\mathcal{A}_{i,j}^n$
in the same way.
By \eqref{eq:Aij} and \eqref{eq:daij}, we have
\begin{equation}
  \label{eq:vci1}
\|I_1\|_{X_k^n(t)^*} \lesssim \eps^3e^{-k_1\eps d(t)}\|\dv_k^{m,n}\|_{X_k^n(t)^*}.
\end{equation}
By Lemma \ref{lem:parametersupb}, Proposition \ref{prop:vNupb} and
Claim \ref{cl:ucsize},
\begin{align*}
\eps^{-3}(\|\Delta_{11}\|_{X_k^n(t)}+\|\Delta_{12}\|_{X_k^n(t)})
+\|\Delta_{21}\|_{X_k^n(t)}+\|\Delta_{22}\|_{X_k^n(t)}
\lesssim &  \eps^{\frac32}e^{k_1\eps(x_i-x_k)}e^{-k_1\eps d(t)},
\\ \eps^{-3}(\|\Delta_{11}\|_{X_k^n(t)^*}+\|\Delta_{12}\|_{X_k^n(t)^*})
+\|\Delta_{21}\|_{X_k^n(t)^*}+\|\Delta_{22}\|_{X_k^n(t)^*} \lesssim &
\eps^{\frac32}e^{k_1\eps(x_k-x_i)}e^{-k_1\eps d(t)}.
\end{align*}
Hence it follows that
\begin{equation}
  \label{eq:vxi2}
  \|I_2\|_{X_k^n(t)^*}+\|I_3\|_{X_k^n(t)^*}
\lesssim \eps^3e^{-k_1\eps d(t)}\|\dv_k^{m,n}\|_{X_k^n(t)^*}.
\end{equation}
In view of \eqref{eq:Aij}, we see that the first order terms of $I_4$ cancel
each other out and
\begin{equation}
  \label{eq:vxi3}
  \|I_4\|_{X_k^n(t)^*} \lesssim \eps^3e^{-k_1\eps d(t)}\|\dv_k^{m,n}\|_{X_k^n(t)^*}.
\end{equation}
Lemmas \ref{lem:vNXN} and \ref{lem:cauchyxc} imply
\begin{equation}
  \label{eq:xcII}
  \begin{split}
& \|(H''(U_k^m)-H''(U_k^n))v_k^m\|_{X_k^n(t)^*} \\ \lesssim
& \sum_{i=1}^k(|\dc^{m,n}_i|+\eps^3|\dg^{m,n}_i|) \|v_k^m\|_{X_k^n(t)^*}
\\ \lesssim & \eps^2e^{-\frac32k_1\eps d(t)}\left(
\eps^{\frac32}e^{-\frac12k_1\eps d(n)}+\sum_{i=1}^N\|\dv_i^{m,n}\|_{Y_n(t)\cap Z_{i,n}(t)}\right).
  \end{split}
\end{equation}
By Lemma \ref{lem:vNXN}, \eqref{eq:lkn} and \eqref{eq:difpro},
\begin{align*}
\|III\|_{X_k^n(t)^*} \lesssim
\eps^3e^{-\frac32k_1\eps d(t)}\left(\eps^{\frac32}e^{-\frac12k_1\eps d(n)}
+\sum_{i=1}^N\|\dv_i^{m,n}\|_{Y_n(t)\cap Z_{i,n}(t)}\right).
\end{align*}
\par
By Proposition \ref{prop:vNupb}, Lemma \ref{lem:cauchyxc},  
\eqref{eq:ucdiff} and \eqref{eq:rk1dif}--\eqref{eq:rk3dif},
\begin{align*}
\|R_{k1}^m-R_{k1}^n\|_{X_k^n(t)^*} \lesssim &
\|v_k^n\|_{X_k^n(t)^*}\sum_{i=1}^k \|\dv_i^{m,n}\|_{l^2}
+\|\dv_k^{m,n}\|_{X_k^n(t)^*}\sum_{i=1}^k\|v_i^m\|_{l^2}
\\ &+\|v_k^n\|_{X_k^n(t)^*}\sum_{i, j=1}^k\|v_j^n\|_{l^2}(|\dc^{m,n}_i|+\eps^3|\dg^{m,n}_i|)
\\ \lesssim &
\eps^{\frac32}e^{-\frac32k_1\eps d(t)}\left(\eps^{\frac72}e^{-\frac12k_1\eps d(n)}+
\sum_{i=1}^N\|\dv_i^{m,n}\|_{Y_n(t)\cap Z_{i,n}(t)}\right),
\end{align*}
\begin{align*}
\|R_{k2}^m-R_{k2}^n\|_{X_k^n(t)^*}\lesssim &
\eps^{\frac32}e^{-k_1\eps d(t)}\sum_{i=1}^k(|\dc^{m,n}_i|+\eps^3|\dg^{m,n}_i|)
\\ \lesssim &
\eps^2e^{-\frac32k_1\eps d(t)}\left(\eps^{\frac32}e^{-\frac12k_1\eps d(n)}
+\sum_{i=1}^N\|\dv_i^{m,n}\|_{Y_n(t)\cap Z_{i,n}(t)}\right),
\end{align*}
\begin{align*}
& \|R_{k3}^m-R_{k3}^n\|_{X_k^n(t)^*}\\ \lesssim & \eps^2\|\dw^{m,n}_{k-1}\|_{l^2}
+\|w_{k-1}^n\|_{l^2}\sum_{i=1}^k(|\dc^{m,n}_i|+\eps^3|\dg^{m,n}_i|)
\\ \lesssim &
\eps^2e^{-\frac12k_1\eps d(t)}\left\{\sum_{i=1}^{k-1}\|\dw^{m,n}_i\|_{Y_n(t)}
+e^{-k_1\eps d(t)}\left(\eps^{\frac32}e^{-\frac12k_1\eps d(n)}+
\sum_{i=1}^N\|\dv_i^{m,n}\|_{Y_n(t)\cap Z_{i,n}(t)}\right)\right\}.
\end{align*}
By \eqref{eq:rkn} and Proposition \ref{prop:vNupb},
$$\|e^{-k_1\eps(\cdot-x_k^n(t))}R_k^n\|_{l^1}\lesssim \eps^3e^{-k_1\eps d(t)}.$$
Thus by \eqref{eq:difpro} and the fact that
$\|P_k^mJ\|_{B(X_k^n(t)^*,e^{k_1\eps(\cdot-x_k^n(t))}l^1)}=O(\eps^{\frac32})$,
we have
$$\|(P_k^m-P_k^n)P_k^mJR_k^m\|_{X_k^n(t)^*}
\lesssim  \eps^3e^{-\frac32k_1\eps d(t)}\left(\eps^{\frac32}e^{-\frac12k_1\eps d(n)}
+\sum_{i=1}^N\|\dv_i^{m,n}\|_{Y_n(t)\cap Z_{i,n}(t)}\right).$$
Combining the above with Lemma \ref{lem:linearstability}, we see
that for $t\in[T,n]$,
\begin{equation}
  \label{eq:difvmvnq}
  \begin{split}
& \|Q_k^n(t)\dv_k^{m,n}(t)\|_{X_k^n(t)^*} 
\lesssim  e^{b\eps^3(t-n)}\|Q_k^n(n)\dv_k^{m,n}(n)\|_{X_k^n(n)^*}
\\ \qquad & + 
\left(\eps^{\frac92}e^{-\frac12k_1\eps d(n)}+
\eps^3\sum_{i=1}^{k-1}\|\dv_i^{m,n}\|_{Y_n(t)}+\eps^{\frac52}e^{-k_1\eps d(t)}
\sum_{i=1}^N\|\dv_i^{m,n}\|_{Y_n(t)\cap Z_{i,n}(t)}\right)
\\ &\quad \times
\int_t^n e^{b\eps^3(t-s)}\left(1+(\eps^3(s-t)\right)^{-\frac12}
e^{-\frac12k_1\eps d(s)}ds.
  \end{split}
\end{equation}
Thus we have Lemma \ref{lem:vkxkdif} from \eqref{eq:difvmvnp} and
\eqref{eq:difvmvnq}.
\end{proof}

Secondly, we will prove that $\{v_k^n\}_{n=1}^\infty$ is a Cauchy sequence
sequence in the energy space.
\begin{lemma}
  \label{lem:cauchy-en}
There exist an $n_0\in\N$ and a $C>0$ such that for any $m\ge n\ge n_0$
and  $t\in [T,n]$,
\begin{align*}
\|\dv_k^{m,n}\|_{Y_n(t)}^2 \le C\left(
\eps^3e^{-k_1\eps d(n)}+\sum_{i=1}^{k-1}\|\dv_i^{m,n}\|_{Y_n(t)}^2
+\eps^{-1}e^{-k_1\eps d(t)}\sum_{i=1}^N\|\dv_i^{m,n}\|_{Y_n(t)}^2\right).
\end{align*}
\end{lemma}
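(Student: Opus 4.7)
\emph{Plan.} I would adapt the Hamiltonian estimate of Lemma~\ref{lem:speed-Hamiltonian} to the \emph{difference} of the two approximate solutions $U_k^m+w_k^m$ and $U_k^n+w_k^n$. Since $\dv_k^{m,n}=\dw_k^{m,n}-\dw_{k-1}^{m,n}$ with $\dw_0^{m,n}=0$, it suffices to prove the analogous bound for $\|\dw_k^{m,n}\|_{Y_n(t)}^2$; the stated lemma then follows by triangle inequality, the leftover $\|\dw_{k-1}^{m,n}\|_{Y_n(t)}^2$ being absorbed into the sum $\sum_{i\le k-1}\|\dv_i^{m,n}\|_{Y_n(t)}^2$ on the right.

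The key functional is
\[
F_k(t):=H(U_k^m(t)+w_k^m(t))-H(U_k^n(t)+w_k^n(t))-\sum_{i=1}^k\theta_1(c_i^n(t))(c_i^m(t)-c_i^n(t)),
\]
where $\theta_1(c)=dH(u_c)/dc$ as in Lemma~\ref{lem:parametersupb}; the last subtraction removes the dominant linear-in-$\dc_i^{m,n}$ piece, in direct analogy with how the term $I_2$ is treated in Lemma~\ref{lem:speed-Hamiltonian}. Taylor-expanding around $U_k^n+w_k^n$ with increment $\delta\xi:=\dU_k^{m,n}+\dw_k^{m,n}$ and using (P2) together with Proposition~\ref{prop:vNupb} yields a coercivity bound
\[
F_k(t)\ge c_0\|\dw_k^{m,n}(t)\|_{l^2}^2-C_0\|\dU_k^{m,n}(t)\|_{l^2}^2-R_k(t),
\]
where $R_k(t)$ consists of residual linear terms in $\delta\xi$ controlled by the orthogonality \eqref{eq:orthv2k} for $w_k^n$ (transferred to $w_k^m$ via Lemma~\ref{lem:cauchyxc}) together with the secular mode identities \eqref{eq:secularmode}, while $\|\dU_k^{m,n}\|_{l^2}^2$ is absorbed through Corollary~\ref{cor:ucdiff}.

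To estimate $F_k(t)$ itself, I would split into two cases. For $k=N$, the identity $U_N^\cdot+w_N^\cdot=u^\cdot$ from Lemma~\ref{lem:modulation} combined with conservation of $H$ gives $F_N(t)=H(u^m(m))-H(u^n(n))-\sum_i\theta_1(c_i^n(t))\dc_i^{m,n}(t)$, which is $O(\eps^3 e^{-k_1\eps d(n)})$ after comparison with $\sum_i H(u_{c_{i,+}})$ via the interaction estimates of Lemma~\ref{lem:speed-Hamiltonian}. For $k<N$, differentiate $F_k$ using \eqref{eq:ukwk} for both $m$ and $n$ and repeat the $II_1$--$II_6$ decomposition of Lemma~\ref{lem:speed-Hamiltonian} applied to the difference of the two equations; each piece is controlled by Proposition~\ref{prop:vNupb}, Lemma~\ref{lem:cauchyxc}, and Lemma~\ref{lem:vkxkdif}, giving
\[
\bigl|\tfrac{d}{dt}F_k(t)\bigr|\lesssim \eps^3 e^{-k_1\eps d(t)}\sum_{i=1}^N\|\dv_i^{m,n}(t)\|_{l^2}^2+\eps^6 e^{-2k_1\eps d(t)}.
\]
Integrating on $[t,n]$ and using $|F_k(n)|\lesssim \eps^3 e^{-k_1\eps d(n)}$ (from Proposition~\ref{prop:vNupb} and the smallness of $w_k^m(n)$) produces the three terms on the right-hand side of the lemma.

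The main obstacle will be the algebraic bookkeeping inside $R_k(t)$: the orthogonality \eqref{eq:orthv2k} is formulated relative to the parameters $c_i^n,x_i^n$, while $w_k^m$ is orthogonal relative to $c_i^m,x_i^m$, so the mismatch in the linear contribution $\la H'(U_k^n+w_k^n),\dw_k^{m,n}\ra$ must be tracked carefully via Lemma~\ref{lem:cauchyxc}. The factor $\eps^{-1}e^{-k_1\eps d(t)}$ in the third term of the lemma arises from integrating $\eps^3 e^{-k_1\eps d(s)}$ over $[t,n]$, and its smallness for $t$ large is what eventually closes the bootstrap in conjunction with Lemmas~\ref{lem:cauchyxc} and~\ref{lem:vkxkdif}.
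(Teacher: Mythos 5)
Your plan has a genuine gap: the Hamiltonian-difference functional $F_k$ is not quadratic in the differences, and its surviving linear terms cannot be controlled in the $Y_n$-topology. Expanding $H(U_k^m+w_k^m)-H(U_k^n+w_k^n)$ around $U_k^n+w_k^n$ produces the first-order term $\la H'(U_k^n+w_k^n),\dU_k^{m,n}+\dw_k^{m,n}\ra$, and the orthogonality \eqref{eq:orthv2k} only removes part of it: $v_j^n$ is orthogonal to $J^{-1}\pd_xu_{c_i^n},J^{-1}\pd_cu_{c_i^n}$ only for $i\le j$, so pairings such as $\la H'(u_{c_i^n}(\cdot-x_i^n)),\dv_j^{m,n}\ra$ with $j<i\le k$ survive (and the subtraction of $\sum_i\theta_1(c_i^n)\dc_i^{m,n}$ only deals with the $\dU$-part of the linear term, itself only up to a contribution that is \emph{linear} in $\dc_i^{m,n},\dg_i^{m,n}$). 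The decisive obstruction is the weight: the lemma asks for a bound on $\sup_s e^{k_1\eps d(s)}\|\dv_k(s)\|_{l^2}^2$, i.e.\ the \emph{full} weight $e^{k_1\eps d(s)}$, while Lemmas \ref{lem:cauchyxc} and \ref{lem:vkxkdif} give the differences $\dc_i^{m,n},\dg_i^{m,n},\dv_i^{m,n}$ only the \emph{half} weight decay $e^{-\frac12 k_1\eps d(s)}$. A term that is quadratic in differences absorbs the full weight as two half weights (this is exactly how $\|\dU_k^{m,n}\|_{l^2}^2$ is handled via Corollary \ref{cor:ucdiff}), but a term that is merely linear in a difference leaves a factor $e^{\frac12 k_1\eps d(s)}$ unabsorbed, producing contributions that grow like $e^{\frac12 k_1\eps d(n)}$ or, after Young's inequality, constants of size $\eps^3e^{-k_1\eps d(t)}$ with no decay in $n$; neither can be bounded by the stated right-hand side $\eps^3e^{-k_1\eps d(n)}+\sum_{i<k}\|\dv_i^{m,n}\|_{Y_n(t)}^2+\eps^{-1}e^{-k_1\eps d(t)}\sum_i\|\dv_i^{m,n}\|_{Y_n(t)}^2$, and such terms would destroy the Cauchy argument the lemma feeds into. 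The same defect already appears in your proposed bound $|\frac{d}{dt}F_k|\lesssim \eps^3e^{-k_1\eps d(t)}\sum_i\|\dv_i^{m,n}\|_{l^2}^2+\eps^6e^{-2k_1\eps d(t)}$: integrating the last term over $[t,n]$ and multiplying by $e^{k_1\eps d(t)}$ yields $\eps^3e^{-k_1\eps d(t)}$, not $\eps^3e^{-k_1\eps d(n)}$; every non-difference source must carry $e^{-k_1\eps d(n)}$, which can only come from the data at $t=n$.

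The paper avoids all of this by never expanding the Hamiltonian of the difference: it works with the manifestly quadratic functional $\la H''(U_k^n)(\dU_k^{m,n}+\dw_k^{m,n}),\dU_k^{m,n}+\dw_k^{m,n}\ra$, whose coercivity follows from (P2) alone (no orthogonality or spectral decomposition is needed, since $H''(U_k^n)$ is positive definite), and differentiates it along the difference equation $\pd_t(\dU_k^{m,n}+\dw_k^{m,n})=JH''(U_k^n+w_k^n)(\dU_k^{m,n}+\dw_k^{m,n})+J\delta^2H+\delta N$. The leading term cancels by skew-adjointness of $J$ up to the commutator with $H''(U_k^n+w_k^n)-H''(U_k^n)=O(w_k^n)$, and every remaining term is a product of two difference-type factors ($\delta^2H$, $\delta N$, controlled through Lemmas \ref{lem:cauchyxc} and \ref{lem:vkxkdif}) or carries $e^{-k_1\eps d(n)}$, so the full-weight bookkeeping closes and Corollary \ref{cor:ucdiff} then removes $\dU_k^{m,n}$. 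If you want to salvage a Hamiltonian-based argument, you would have to show that \emph{all} first-order terms either cancel exactly or come multiplied by a second difference factor, which your sketch does not do and which the partial orthogonality structure does not provide.
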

\begin{proof}
Let
\begin{align*}
& \delta^2H=H'(U_k^m+w_k^m)-H'(U_k^n+w_k^n)-H''(U_k^n+w_k^n)(\dU_k^{m,n}+\dw^{m,n}_k),\\
& \delta N=\tilde{l}_k^m-\tilde{l}_k^n
+\sum_{i=1}^k(l_i^m-l_i^n)-\sum_{i=1}^k(P_i^mJR_i^m-P_i^nJR_i^n).
\end{align*}
Then
\begin{equation*}
\pd_t(\dU_k^{m,n}+\dw_k^{m,n})= JH''(U_k^n+w_k^n)(\dU_k^{m,n}+\dw^{m,n}_k)+
J\delta^2H+\delta N,
\end{equation*}
and we have
\begin{align*}
& \frac{d}{dt}\la H''(U_k^n)(\dU_k^{m,n}+\dw^{m,n}_k), \dU_k^{m,n}+\dw^{m,n}_k\ra
\\ =&
2\la H''(U_k^n)(\dU_k^{m,n}+\dw^{m,n}_k), JH''(U_k^n+w_k^n)(\dU_k^{m,n}+\dw^{m,n}_k)\ra
\\ &
+2\la H''(U_k^m)(\dU_k^{m,n}+\dw^{m,n}_k), J\delta^2H\ra
+2\la H''(U_k^n)(\dU_k^{m,n}+\dw^{m,n}_k), \delta N\ra
\\ &+
\left\la \left(\frac{d}{dt}H''(U_k^n)\right)(\dU_k^{m,n}+\dw^{m,n}_k),\dU_k^{m,n}+\dw^{m,n}_k\right\ra
\\=:& I+II+III+IV.
\end{align*}
Using the skew-adjointness of $J$ and Proposition \ref{prop:vNupb}, we have
\begin{equation}
  \label{eq:lemen1}
\begin{split}
|I| \le & 2\|H''(U_k^n+w_k^n)-H''(U_k^n)\|_{B(l^2)}\|\dU_k^{m,n}+\dw^{m,n}_k\|_{l^2}^2
\\ \lesssim & \eps^{\frac32}e^{-k_1\eps d(t)} \|\dU_k^{m,n}+\dw^{m,n}_k\|_{l^2}^2.
\end{split}
\end{equation}
Proposition \ref{prop:vNupb} and Corollary \ref{cor:ucdiff} imply
\begin{equation}
  \label{eq:lemen2}
|II|\lesssim \|\dU_k^{m,n}+\dw^{m,n}_k\|_{l^2}^3 \lesssim 
\eps^{\frac32}e^{-k_1\eps d(t)} \|\dU_k^{m,n}+\dw^{m,n}_k\|_{l^2}^2.  
\end{equation}
\par
Next, we will estimate $III$.
In view of Lemma \ref{lem:cauchyxc} and \eqref{eq:dotx-cdif}, we have
\begin{align*}
&  e^{\frac12k_1\eps d(t)}(|\pd_t\dc^{m,n}_k|+\eps^3|\pd_t\dg^{m,n}_k-\dc^{m,n}_k|)
\\ \lesssim & \eps^{\frac72}e^{-\frac12k_1\eps d(t)}\left\{
\sum_{i=1}^{k-1}\|\dv_i^{m,n}\|_{Y_n(t)}
+e^{-k_1\eps d(t)}\left(\eps^{\frac32}e^{-\frac12k_1\eps d(n)}
+\sum_{i=1}^N \|\dv_i^{m,n}\|_{Y_n(t)\cap Z_{i,n}(t)}\right)\right\}.
\end{align*}
Combining the above with \eqref{eq:modeq1'}, \eqref{eq:lemxc4} and
\eqref{eq:lemxc5}, we obtain
$$|\alpha_{ik}^m-\alpha_{ik}^n|+\eps^3|\beta_{ik}^m-\beta_{ik}^n|
\lesssim \eps^{\frac72}e^{-\frac32k_1\eps d(t)}\left(\eps^{\frac32}e^{-\frac12k_1\eps d(n)}+
\sum_{i=1}^N \|\dv_i^{m,n}\|_{Y_n(t)\cap Z_{i,n}(t)}\right).$$
Thus we have 
\begin{align*}
& e^{\frac12k_1\eps d(t)}\left(\|\tilde{l}_k^m-\tilde{l}_k^n\|_{W_k^n(t)^*}
+\sum_{i=1}^k\|l_i^m-l_i^n\|_{W_k^n(t)^*}\right)
\\ \lesssim & \eps^3\sum_{i=1}^{k-1}\|\dv_i^{m,n}\|_{Y_n(t)}
+\eps^3e^{-k_1\eps d(t)}\left(\eps^{\frac32}e^{-\frac12k_1\eps d(n)}
+\sum_{i=1}^N\|\dv_i^{m,n}\|_{Y_n(t)\cap Z_{i,n}(t)}\right).
\end{align*}
\par

By Proposition \ref{prop:vNupb} and \eqref{eq:rk1dif}--\eqref{eq:rk3dif}, 
\begin{align*}
  & \|R_{k1}^m-R_{k1}^n\|_{\widetilde{W}_k(t)} \lesssim 
\eps^{\frac32}e^{-k_1\eps d(t)}(\|\dw^{m,n}_{k-1}\|_{W_k^n(t)}+\|\dv_k^{m,n}\|_{W_k^n(t)}),
\\ & \|R_{k2}^m-R_{k2}^n\|_{W_k^n(t)} \lesssim 
\eps^2e^{-\frac32k_1\eps d(t)} \left(\eps^{\frac32}e^{-\frac12k_1\eps d(n)}
+\sum_{i=1}^N \|\dv_i^{m,n}\|_{Y_n(t)\cap Z_{i,n}(t)}\right),
\end{align*}
and
\begin{align*}
& \|R_{k3}^m-R_{k3}^n\|_{W_k^n(t)} \\ \lesssim & 
\eps^2e^{-\frac12k_1\eps d(t)}\left\{\sum_{i=1}^{k-1}\|\dv_i^{m,n}\|_{Y_n(t)}
+e^{-k_1\eps d(t)}\left(\eps^{\frac32}e^{-\frac12k_1\eps d(n)}
+\sum_{i=1}^N \|\dv_i^{m,n}\|_{Y_n(t)\cap Z_{i,n}(t)}\right)\right\}.
\end{align*}
in the same way as the proof of Lemma \ref{lem:vkxkdif}.
Since $\|R_k^n\|_{\widetilde{W}_k(t)^*}\lesssim \eps^3e^{-k_1\eps d(t)}$ and
\begin{align*}
& \|P_i^nJ\|_{B(W_k^n(t),W_k^n(t)^*)}+\eps^{-\frac12}
\|P_i^nJ\|_{B(\widetilde{W}_k^n(t),W_k^n(t)^*)}=O(\eps),
\\ &  \eps^{\frac12}\|(P_i^m-P_i^n)J\|_{B(W_k^n(t),W_k^n(t)^*)}
+\|(P_i^m-P_i^n)J\|_{B(\widetilde{W}_k^n(t),W_k^n(t)^*)}
\\ \lesssim & e^{-\frac12 k_1\eps d(t)}\left(\eps^{\frac32}e^{-\frac12k_1\eps d(n)}
+\sum_{k=1}^N \|\dv_k^{m,n}\|_{Y_n(t)\cap Z_{k,n}(t)}\right),
\end{align*}
it follows that
\begin{align*}
&  \|P_i^mJR_i^m-P_i^nJR_i^n\|_{W_k^n(t)^*}\\ \le &
\|(P_i^m-P_i^n)JR_i^m\|_{W_k^n(t)^*}+\|P_i^nJ(R_i^m-R_i^n)\|_{W_k^n(t)^*}
\\ \lesssim & 
\eps^3e^{-\frac12k_1\eps d(t)} \left\{\sum_{i=1}^{k-1}\|\dv_i^{m,n}\|_{Y_n(t)}
+e^{-k_1\eps d(t)}\left(\eps^{\frac32}e^{-\frac12k_1\eps d(n)}
+\sum_{i=1}^N \|\dv_i^{m,n}\|_{Y_n(t)\cap Z_{i,n}(t)}\right)\right\}.
\end{align*}
Thus we have
\begin{equation}
  \label{eq:lemen3}
  \begin{split}
|III|\lesssim & \|\dU_k^{m,n}+\dw^{m,n}_k\|_{W_k^n(t)}\|\delta N\|_{W_k^n(t)^*}
\\ \lesssim &
\eps^3\|\dU_k^{m,n}+\dw^{m,n}_k\|_{W_k^n(t)}^2
+\eps^3e^{-k_1\eps d(t)}\sum_{i=1}^{k-1}\|\dv_i^{m,n}\|_{Y_n(t)}^2
\\ &
+\eps^3e^{-3k_1\eps d(t)}\left(\eps^3e^{-k_1\eps d(n)}+
\sum_{i=1}^N \|\dv_i^{m,n}\|_{Y_n(t)\cap Z_{i,n}(t)}^2\right). 
  \end{split}
\end{equation}

Since $\|\frac{d}{dt}H''(U_k^n)\|_{B(W_k^n(t),W_k^n(t)^*)}=O(\eps^3)$,
\begin{equation}
  \label{eq:lemen7}
  |IV|\lesssim  \eps^3\|\dU_k^{m,n}+\dw^{m,n}_k\|_{W_k^n(t)}^2.
\end{equation}
By Lemma \ref{lem:vkxkdif}, Corollary \ref{cor:ucdiff}
and the fact that $\|v_k(t)\|_{W_k^n(t)}\lesssim \|v_k(t)\|_{X_k^n(t)^*}$,
\begin{equation}
  \label{eq:lemen4}
  \begin{split}
& \|\dU_k^{m,n}+\dw^{m,n}_k\|_{W_k^n(t)}^2 \\ \lesssim & \sum_{i=1}^{k-1}\|\dv_i^{m,n}\|_{l^2}^2
+\|\dv_k^{m,n}\|_{X_k^n(t)^*}^2 +\|\dU_k^{m,n}\|_{l^2}^2
\\ \lesssim & \eps^3e^{-k_1\eps(d(t)+d(n))}
+\sum_{i=1}^{k-1}e^{-k_1\eps d(t)}\|\dv_i^{m,n}\|_{Y_n(t)}^2
+ \eps^{-1}e^{-3k_1\eps d(t)}\sum_{i=1}^N \|\dv_i^{m,n}\|_{Y_n(t)}^2.
  \end{split}
\end{equation}
Combining \eqref{eq:lemen1}--\eqref{eq:lemen4}, we obtain
\begin{equation}
  \label{eq:lemen8}
  \begin{split}
    & \left|\frac{d}{dt}\la H''(U_k^n)(\dU_k^{m,n}+\dw^{m,n}_k), \dU_k^{m,n}+\dw^{m,n}_k\ra\right|
\\ \lesssim & \eps^{\frac32}e^{-k_1\eps d(t)}\|\dU_k^{m,n}+\dw^{m,n}_k\|_{l^2}^2
+\eps^6e^{-k_1\eps(d(t)+d(n))}+\eps^3e^{-k_1\eps d(t)}\sum_{i=1}^{k-1}\|\dv_i^{m,n}(t)\|_{Y_n(t)}^2
\\ & +\eps^2e^{-3k_1\eps d(t)}\sum_{i=1}^N\|\dv_i^{m,n}\|_{Y_n(t)}^2.
  \end{split}
\end{equation}
Integrating \eqref{eq:lemen8}over $[t,n]$, we have
\begin{equation}
  \label{eq:dukwkl2}
  \begin{split}
& \left(1+O(\eps^{-\frac32}e^{-k_1\eps d(t)})\right)
\|\dU_k^{m,n}(t)+\dw^{m,n}_k(t)\|_{l^2}^2
\\ \lesssim & \eps^3e^{-k_1\eps(d(n)+d(t))}
+e^{-k_1\eps d(t)}\sum_{i=1}^{k-1}\|\dv_i^{m,n}(t)\|_{Y_n(t)}^2
+\eps^{-1}e^{-3k_1\eps d(t)}\sum_{i=1}^N\|\dv_i^{m,n}\|_{Y_n(t)}^2.
  \end{split}
\end{equation}
Combining the above with Corollary \ref{cor:ucdiff}, we have
Lemma \ref{lem:cauchy-en}. Thus we complete the proof.
\end{proof}
Now we are in position to prove existence of $N$-soliton states.
\begin{proof}[Proof of the former part of Theorem \ref{thm:1}]
By \ref{lem:cauchyxc}, \ref{lem:vkxkdif} and \ref{lem:cauchy-en},
there exist positive constant $C$ and $n_0\in\N$ such that
\begin{equation}
\label{eq:1f1}
\sum_{k=1}^N e^{\frac12 k_1\eps d(t)}\left\{\eps^{\frac12}\|\dv_k^{m,n}(t)\|_{l^2}
+|\dc_k^{m,n}(t)|+\eps^3|\dg_k^{m,n}(t)|\right\} \le Ce^{-\frac12k_1\eps d(n)}
\end{equation}
for every $m\ge n\ge n_0$ and $t\in[T,n]$. Therefore 
$$v_k(t)=\lim_{n\to\infty}v_k^n(t), \quad
c_k(t)=\lim_{n\to\infty}c_k^n(t),\quad
\gamma_k(t)=\lim_{n\to\infty}\gamma_k^n(t)
$$
exist for every $t\ge T$ and it follows from Proposition \ref{prop:vNupb} that
\begin{equation}
  \label{eq:1f2}
 \eps^{\frac12}\|v_k(t)\|_{l^2}+|c_k(t)-c_{k,+}|+\eps^3|\gamma_k(t)-\gamma_{k,+}|
\le A\eps^2e^{-k_1\eps d(t)}
\end{equation}
for every $1\le k\le N$ and $t\ge T$, where $A$ is a constant independent of $t$.
Moreover, 
$$u(t,n):=\sum_{k=1}^N\left(u_{c_k(t)}(n-c_{k,+}t-\gamma_k(t))+v_k(t,n)\right)
$$
is a solution of \eqref{eq:FPU}. In view of \eqref{eq:1f2},
\begin{align*}
& \left\|u(t)-\sum_{k=1}^Nu_{c_{k,+}}(\cdot-c_{k,+}t-\gamma_{k,+})\right\|_{l^2}
\\ \le & \sum_{k=1}^N\|v_k(t)\|_{l^2}+
\|u_{c_{k(t)}}(\cdot-c_{k,+}t-\gamma_k(t))-
u_{c_{k,+}}(\cdot-c_{k,+}t-\gamma_{k,+})\|_{l^2}
\\ \lesssim & \eps^{\frac32}e^{-k_1\eps d(t)}.
\end{align*}
Thus we prove existence of a solution to \eqref{eq:FPU} satisfying
\eqref{eq:convergence}.
\end{proof}

\section{Uniqueness}
\label{sec:uniqueness}
In this section, we will prove uniqueness of a solution of \eqref{eq:FPU}
that converges to a sum of $N$ solitary waves as $t\to\infty$.
First, we will prove that an asymptotic $N$-soliton state converges to
a sum of $N$ solitary waves as $t\to\infty$ if it is sufficiently small.
\begin{proposition}
  \label{prop:expconv}
Let $k_{N}>\cdots>k_1>0$, $c_{i,+}=1+\frac{(k_i\eps)^2}{6}$ and $\gamma_{i,+}\in\R$
for $1\le i\le N$. There exists a positive number $\eps_0$ such that if
$\eps\in(0,\eps_0)$ and $u(t)$ is a solution of \eqref{eq:FPU} satisfying
\eqref{eq:convergence}, then \eqref{eq:expconvergence} holds for a $\beta>0$.
\end{proposition}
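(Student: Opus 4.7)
The plan is to transfer the modulation-and-a-priori-estimate machinery of Section \ref{subsec:2.1}, developed there for the finite-time approximants $u^n(t)$, directly to the given asymptotic solution $u(t)$, with $t_1=\infty$ playing the role of the ``initial'' time. For $t\ge T$ sufficiently large, \eqref{eq:convergence} places $u(t)$ in a tubular neighbourhood of the $N$-soliton manifold, so an implicit function theorem argument (identical to Lemma \ref{lem:modulation} with the superscript $n$ dropped) produces a unique decomposition
\begin{equation*}
u(t)=\sum_{k=1}^N u_{c_k(t)}(\cdot-x_k(t))+v(t),\qquad v(t)=\sum_{k=1}^N v_k(t),
\end{equation*}
with each $v_k(t)$ obeying the orthogonality conditions \eqref{eq:orthv2k} and with $(c_k,x_k)$ evolving by the modulation ODEs of Lemma \ref{lem:modulation}. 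The non-degeneracy of this orthogonality system combined with \eqref{eq:convergence} yields $c_k(t)\to c_{k,+}$, $\gamma_k(t):=x_k(t)-c_{k,+}t\to\gamma_{k,+}$, and $\|v(t)\|_{l^2}\to 0$ as $t\to\infty$.

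With this decomposition and asymptotic smallness in hand, I would rerun Lemmas \ref{lem:parametersupb}, \ref{lem:speed-Hamiltonian}, \ref{lem:virialv2k}, \ref{lem:linearstability} and \ref{lem:vNXN} on $u(t)$ directly. Their differential inequalities and semigroup bounds depend only on the structure of \eqref{eq:FPU} and of the ansatz, so they apply on $[t,t_1]$ for any $T\le t\le t_1<\infty$. Integrating the energy identity of Lemma \ref{lem:speed-Hamiltonian} and the virial identity of Lemma \ref{lem:virialv2k} over $[t,t_1]$ and passing to the limit $t_1\to\infty$, the boundary contributions at $t_1$ (namely $\|v(t_1)\|_{l^2}$ and $c_k(t_1)-c_{k,+}$) vanish by the previous step, leaving only the forcing of size $\eps^6 e^{-k_1\eps d(t)}$. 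A bootstrap patterned exactly on Lemma \ref{lem:vNXN} and Proposition \ref{prop:vNupb} then yields, for all $t\ge T$,
\begin{gather*}
\|v_k(t)\|_{X_k(t)^*}+\|v_k(t)\|_{l^2}\lesssim \eps^{\frac32}e^{-k_1\eps d(t)},\\
|c_k(t)-c_{k,+}|+\eps^3|\gamma_k(t)-\gamma_{k,+}|\lesssim\eps^2 e^{-k_1\eps d(t)}.
\end{gather*}

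Combining these with the triangle inequality and Claim \ref{cl:ucsize} exactly as in the proof of the former part of Theorem \ref{thm:1} at the end of Section \ref{sec:existence}, one obtains
\begin{equation*}
\left\|u(t)-\sum_{k=1}^N u_{c_{k,+}}(\cdot-c_{k,+}t-\gamma_{k,+})\right\|_{l^2}\lesssim \eps^{\frac32}e^{-k_1\eps d(t)}.
\end{equation*}
Since $\eps d(t)\ge\tfrac12\eps h(t)$ up to a bounded additive constant (by the definitions of $\sigma$ and $h$), this is \eqref{eq:expconvergence} with, e.g., $\beta=k_1/2$.

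The delicate point is closing the linear-stability estimate of Lemma \ref{lem:linearstability} in the limit $t_1\to\infty$, since its endpoint contribution $Me^{b\eps^3(t-t_1)}\|v_k(t_1)\|_{X_k(t_1)^*}$ is not directly bounded by the $l^2$ smallness furnished by \eqref{eq:convergence}: the weight of $X_k(t_1)^*$ grows exponentially to the right of $x_k(t_1)$, so $l^2$-decay at infinity does not automatically promote to weighted decay. I would handle this by a continuity/bootstrap argument on $\sup_{s\in[t,t_1]}e^{\frac12 k_1\eps d(s)}\|v_k(s)\|_{X_k(s)^*}$: tentatively bounding this supremum by $A\eps^{\frac32}$, the endpoint contribution is dominated by $MAe^{b\eps^3(t-t_1)}\eps^{\frac32}e^{-\frac12 k_1\eps d(t_1)}\to 0$ as $t_1\to\infty$ for fixed $t$ (using finite-speed propagation and $c_k>1$ to guarantee that the weighted norm at $t_1$ is at least uniformly bounded), while the Duhamel term with forcings of size $\eps^{\frac92}e^{-k_1\eps d(s)}$ recovers a strictly smaller constant for $\eps$ small. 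This mirrors the structure of Lemma \ref{lem:vNXN} and is the main obstacle of the argument.
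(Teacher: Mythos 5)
The difficulty you yourself flag at the end is a genuine gap, and the bootstrap you sketch cannot close it. Your remainder $v_k(t)$ is obtained by decomposing $u(t)$ itself, and the only information \eqref{eq:convergence} provides about $u(t)-U_{N,+}(t)$ is smallness in $l^2$: there is no spatial decay hypothesis at all. Consequently $\|v_k(t_1)\|_{X_k(t_1)^*}$, whose weight grows exponentially ahead of the $k$-th wave, need not be finite for even one value of $t_1$, so the quantity $\sup_{s\in[t,t_1]}e^{\frac12 k_1\eps d(s)}\|v_k(s)\|_{X_k(s)^*}$ on which you want to run a continuity argument may be identically $+\infty$; a bootstrap cannot start from an unverified finiteness assumption, and ``finite-speed propagation'' is not available for \eqref{eq:FPU} (even an approximate light-cone bound could not manufacture exponential spatial decay from merely $l^2$ data). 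A related point: the splitting of the remainder into components $v_1,\dots,v_N$ each solving \eqref{eq:vk} is not produced by the implicit function theorem; it is defined by integrating the triangular system backward from a finite time at which $v_k$ vanishes, and there is no way to impose the condition ``$v_k(\infty)=0$'' directly, so working with ``$t_1=\infty$ as initial time'' is not meaningful without an approximation and limit procedure.

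The paper's proof is organized precisely to avoid these two problems. For each finite $n$ it decomposes $u$ on $[T,n]$ with data prescribed at $t=n$: the non-localized part of the perturbation is placed in an extra component $v_0^n$ solving \eqref{eq:v0n} with $v_0^n(n)=u(n)-U_{N,+}(n)$, and this component is only ever estimated in $l^2$ and in the localized norms $W^n(t)$ (Lemma \ref{lem:virial-0u}); the exponentially weighted machinery (Lemmas \ref{lem:linearstability} and \ref{lem:locenergy-ku}) is applied only to $v_k^n$, $1\le k\le N$, which vanish at $t=n$ and therefore have finite weighted norms by construction. Since $\|v_0^n(n)\|_{l^2}\to0$ by \eqref{eq:convergence}, the resulting bounds \eqref{eq:vknw}--\eqref{eq:xc-n2} are uniform in $n$; the paper then passes to the limit along a subsequence by Arzel\`a--Ascoli applied to $c_k^n$, $x_k^n$, deduces exponential convergence of $u(t)$ to a sum of solitary waves with \emph{some} asymptotic parameters $\tilde c_{k,+}$, $\tilde\gamma_{k,+}$, and only at the very end uses \eqref{eq:convergence} again to identify these with $c_{k,+}$, $\gamma_{k,+}$. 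Your outline is missing both the auxiliary component $v_0^n$ (or any substitute confining the merely-$l^2$ part of the perturbation to unweighted and local-energy norms) and the compactness step; with the direct decomposition and the $t_1\to\infty$ limit as proposed, the weighted endpoint term cannot be controlled, so the argument does not go through as written.
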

To prove Proposition \ref{prop:expconv}, we will decompose $u(t)$
as in Section \ref{subsec:2.1}.
Let $$U_{N,+}(t)=\sum_{i=1}^Nu_{c_{i,+}}(\cdot-c_{i,+}t-\gamma_{i,+})$$ and let
$v_{0n}(t)$ and $v_{kn}(t)$ $(1\le k\le N)$ be solutions of
\begin{equation}
  \label{eq:v0n}
  \left\{
    \begin{aligned}
& \pd_tv_0^n=JH'(v_0^n),\\
& v_{0n}(n)=u(n)-U_{N,+}(n),
    \end{aligned}\right.
\end{equation}
and
\begin{equation}
  \label{eq:vkn}\left\{
  \begin{aligned}
&  \pd_tv_k^n=JH''(U_k^n(t))v_k^n+l_k^n+Q_k^n(t)JR_k^n,\\
& v_k^n(n)=0,
  \end{aligned}\right.
\end{equation}
where $w_k^n=\sum_{i=0}^kv_i^n$ $(0\le k\le N)$, $U_k^n$, $R_k^n$ and $l_k^n$
are defined as \eqref{eq:Ukdef}, \eqref{eq:Rkdef} and \eqref{eq:lkdef} and
$x_i^n(t)$, $c_i^n(t)$, $\alpha_{ik}^n(t)$, $\beta_{ik}^n(t)$ are
solutions of \eqref{eq:xcinit}--\eqref{eq:orthvk3}.
Then as in Section \ref{subsec:2.1}, we have $u(t)=U_N^n(t)+w_N^n(t)$
for every $n\in\N$. Moreover, following the proof of \cite[Lemmas 4.1 and 4.2]
{Mi2}, we see that there exist $n_0\in N$ and $T>0$ such that
the decomposition above exists for every $t\in [T,n]$ and $n\ge n_0$.
\begin{lemma}
  \label{lem:modulation-u}
Assume that $c_{i,+}$ and $T$ be as in Theorem \ref{thm:1}.
Let $u(t)$  be a solution of \eqref{eq:FPU} satisfying \eqref{eq:convergence}
and let $v_0^n$ and $v_i^n$ $(1\le i\le N)$ be solutions of \eqref{eq:v0n} and 
\eqref{eq:vkn}, respectively. Suppose that $x_i^n$ and $c_i^n$
$(1\le i\le N)$ are solutions of  \eqref{eq:xcinit} and \eqref{eq:modeq1}
and that $\alpha_{ij}^n$ and $\beta_{ij}^n$ ($1\le i\le k\le N$) are
$C^1$-functions satisfying \eqref{eq:orthvk3}.
Then $u=U_N^n+w_N^n$ and 
$v_k^n$ satisfies \eqref{eq:orthv2k} for $1\le i\le k\le N$.
\par
Furthermore, there exist a positive constant $A$ and  an $n_0\in\N$ such that
for  $n\ge n_0$, $t\in[T,n]$ and $1\le k\le N$,
\begin{gather*}
|\dot{c}_k^n(t)|+\eps^3|\dot{x}_k^n(t)-c_k^n(t)| \le A\eps^{\frac72}
\left(\sum_{i=0}^N\|v_i^n(t)\|_{W^n(t)}+\eps^{\frac32}e^{-k_1\eps d(t)}\right),\\
|\ddot{c}_k^n(t)|+\eps^3|\ddot{x}_k^n(t)|\le A\eps^{\frac92}
\left(\sum_{i=0}^N\|v_i^n(t)\|_{W^n(t)}+\eps^{\frac32}e^{-k_1\eps d(t)}\right),
\end{gather*}
and 
\begin{equation}
\label{eq:c-u}
  \begin{split}
 & \left|\frac{d}{dt}\left\{c_k^n(t)\left(1-\theta_1(c_k^n(t))^{-1}
\la w_{k-1}^n(t),\rho_{c_k^n(t)}\ra\right)\right\}\right|
\\ \le  &
A\eps^2\left(\|v_0^n(t)\|_{W^n(t)}^2+\sum_{i=1}^N\|v_i^n(t)\|_{W(t)\cap X_k(t)}^2
+\eps^3e^{-2k_1\eps d(t)}\right),
  \end{split}
\end{equation}
where $\rho_c=\pd_x(c\pd_x+J)^{-1}(H'(u_c)-u_c)$.
\end{lemma}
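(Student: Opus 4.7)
The plan is to mirror the construction in Section \ref{subsec:2.1} for the exact solution $u(t)$ of \eqref{eq:FPU}, treating the free-remainder $v_0^n$ from \eqref{eq:v0n} as an additional summand alongside the modulated corrections $v_1^n,\dots,v_N^n$. The hypothesis \eqref{eq:convergence} implies $\|v_0^n(n)\|_{l^2}\to 0$, which is exactly what is needed for an implicit-function argument to construct $C^1$ parameters $x_i^n,c_i^n,\alpha_{ij}^n,\beta_{ij}^n$ on some $[T,n]$ satisfying \eqref{eq:xcinit}, \eqref{eq:modeq1}, \eqref{eq:orthvk3}; this is the content of \cite[Lemmas 4.1 and 4.2]{Mi2}. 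Once those parameters are available, the proof of Lemma \ref{lem:modulation} extends directly to this setting with $v_0^n$ absorbed into $w_k^n$, yielding both $u=U_N^n+w_N^n$ and the orthogonality \eqref{eq:orthv2k}.

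For the bounds on $\dot c_k^n$ and $\dot x_k^n-c_k^n$, I would solve the modulation equation \eqref{eq:modeq1} for the unknown $(\eps^{-3}\dot c_i^n,c_i^n-\dot x_i^n)$: the lower-triangular dominant structure \eqref{eq:Aij} of $\mathcal{A}_N^n$ and the smallness of $\delta\widetilde{\mathcal{A}}_k^n$ make $\mathcal{A}_N^n-\sum_{k=1}^N\delta\widetilde{\mathcal{A}}_k^n$ invertible, so it suffices to bound $\sum_{k=1}^N(\delta\wR_{1k}^n+\delta\wR_{2k}^n)$. Pairing $R_k^n$ against the localized modes $\pd_xu_{c_i^n}$ and $\pd_cu_{c_i^n}$ via the decomposition \eqref{eq:rkn} reduces the estimate to three contributions: $R_{k1}^n$ quadratic in $w_k^n,v_k^n$; $R_{k2}^n$ exponentially small in $\eps d(t)$; and $R_{k3}^n$ linear in $w_{k-1}^n$ times a factor localized near $u_{c_k^n}$. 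This reproduces the argument of Lemma \ref{lem:parametersupb} with the list of $v_i^n$ extended to $0\le i\le N$. The second-order bounds follow by differentiating \eqref{eq:modeq1} in $t$, with $\pd_t\mathcal{A}_{i,j}^n$ controlled by \eqref{eq:daij}-type estimates and $\pd_tR_k^n$ controlled via \eqref{eq:v0n}, \eqref{eq:vkn}, and the first-order bounds.

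The core of the lemma is \eqref{eq:c-u}, the cancellation identity that extracts the quadratic behaviour of $\dot c_k^n$. In the present setting the analogue of \eqref{eq:ham28} reads
\begin{equation*}
\theta_1(c_k^n)\dot c_k^n=c_k^n\la R_{k3}^n,\pd_xu_{c_k^n}\ra+O\bigl(\eps^3(\|v_0^n\|_{W^n(t)}^2+\textstyle\sum_{i=1}^N\|v_i^n\|_{X_i^n(t)^*\cap W^n(t)}^2)+\eps^6e^{-k_1\eps d(t)}\bigr).
\end{equation*}
Since $R_{k3}^n=H''(u_{c_k^n})w_{k-1}^n+O\bigl(|u_{c_k^n}|(|U_{k-1}^n|+|w_{k-1}^n|)|w_{k-1}^n|\bigr)$ and the first identity in \eqref{eq:secularmode} gives $H''(u_{c_k^n})\pd_xu_{c_k^n}=-c_k^nJ^{-1}\pd_x^2u_{c_k^n}$, combining with the identification $\rho_c=-J^{-1}\pd_xu_c$ (which follows from $(c\pd_x+J)u_c=-J(H'(u_c)-u_c)$ together with the commutation of $J$ with $(c\pd_x+J)^{-1}$) rewrites the dangerous term as $(c_k^n)^2\la w_{k-1}^n,\pd_x\rho_{c_k^n}\ra$ modulo quadratics. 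Expanding $\frac{d}{dt}\la w_{k-1}^n,\rho_{c_k^n}(\cdot-x_k^n)\ra=\la\pd_tw_{k-1}^n,\rho_{c_k^n}\ra-\dot x_k^n\la w_{k-1}^n,\pd_x\rho_{c_k^n}\ra+\dot c_k^n\la w_{k-1}^n,\pd_c\rho_{c_k^n}\ra$ and substituting $\dot x_k^n=c_k^n+O(\cdots)$ from the first-order bound matches the middle term against the secular contribution. Multiplying through by $c_k^n/\theta_1(c_k^n)$ and regrouping produces \eqref{eq:c-u}.

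The main obstacle is verifying that the residual $\la\pd_tw_{k-1}^n,\rho_{c_k^n}\ra$ is quadratic in the $v_i^n$ plus exponentially small. Because $w_{k-1}^n=v_0^n+\sum_{i=1}^{k-1}v_i^n$ contains the un-modulated tail $v_0^n$, whose time derivative $JH'(v_0^n)$ is fully nonlinear, one must split $JH'(v_0^n)=Jv_0^n+JN(v_0^n)$ with $N$ quadratic and then exploit the exponential decay of $\rho_{c_k^n}$ centered at $x_k^n$ to control the product with $v_0^n$ in the weaker $W^n(t)$ norm — this is what forces $v_0^n$ to appear in \eqref{eq:c-u} only through its un-weighted localized norm. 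The contributions from $v_i^n$ with $1\le i\le k-1$ are treated using \eqref{eq:vkn}, \eqref{eq:alphabetaupb}, and the weighted-space bounds of Lemma \ref{lem:parametersupb}, paralleling the Hamiltonian-loss computations in the proof of Lemma \ref{lem:speed-Hamiltonian}.
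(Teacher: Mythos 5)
Your first two paragraphs are fine and coincide with the paper's proof, which handles the first assertion and the bounds on $\dot c_k^n,\dot x_k^n$ (and their derivatives) by repeating Lemmas \ref{lem:modulation} and \ref{lem:parametersupb} with $v_0^n$ adjoined; your identification $\rho_c=-J^{-1}\pd_x u_c$ is also correct. The genuine gap is in your derivation of \eqref{eq:c-u}. Your expansion of $R_{k3}^n$ drops the identity operator: the correct leading term is $(H''(u_{c_k^n})-I)w_{k-1}^n$ (this is exactly what the paper uses), and the discrepancy, namely $-w_{k-1}^n$ itself, is a \emph{linear} term in $w_{k-1}^n$ that is not of your claimed size $O(|u_{c_k^n}|(|U_{k-1}^n|+|w_{k-1}^n|)|w_{k-1}^n|)$. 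Consequently the ``dangerous term'' in $\theta_1(c_k^n)\dot c_k^n$ is not $(c_k^n)^2\la w_{k-1}^n,\pd_x\rho_{c_k^n}\ra$ but $c_k^n\la w_{k-1}^n,(c_k^n\pd_x+J)\rho_{c_k^n}\ra=(c_k^n)^2\la w_{k-1}^n,\pd_x\rho_{c_k^n}\ra-c_k^n\la w_{k-1}^n,\pd_x u_{c_k^n}\ra$; each of these two pieces is of size $\eps^{5/2}\|w_{k-1}^n\|_{W^n(t)}$, two powers of $\eps$ larger than their sum, so they cannot be separated and one of them silently discarded.

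This error then makes your closing step fail: the ``residual'' $\la\pd_t w_{k-1}^n,\rho_{c_k^n}\ra$ is \emph{not} quadratic plus exponentially small. Its leading part is $\la Jw_{k-1}^n,\rho_{c_k^n}\ra=-\la w_{k-1}^n,J\rho_{c_k^n}\ra=\la w_{k-1}^n,\pd_x u_{c_k^n}\ra$ (since $J\rho_c=-\pd_x u_c$), a linear term of size $\eps^{5/2}\|w_{k-1}^n\|_{W^n(t)}$; no orthogonality removes it, because \eqref{eq:orthv2k} only gives $\la v_i^n,J^{-1}\pd_x u_{c_j^n}\ra=0$ for $j\le i$ and $v_0^n$ satisfies no orthogonality at all, and bounding it through the $W^n(t)$ norm of $v_0^n$, as you propose, yields a linear bound, not a quadratic one. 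In the paper's argument this term is never estimated: combining $\la\pd_t w_{k-1}^n,\rho_{c_k^n}\ra-\dot x_k^n\la w_{k-1}^n,\pd_x\rho_{c_k^n}\ra\approx-\la w_{k-1}^n,(c_k^n\pd_x+J)\rho_{c_k^n}\ra=-\la w_{k-1}^n,(H''(u_{c_k^n})-I)\pd_x u_{c_k^n}\ra$, it cancels to leading order against the full right-hand side of the analogue of \eqref{eq:ham28}, including precisely the $-\la w_{k-1}^n,\pd_x u_{c_k^n}\ra$ contribution produced by the $-I$ you omitted. As written, your scheme leaves this term uncancelled, which after dividing by $\theta_1(c_k^n)$ is far larger than the quadratic right-hand side of \eqref{eq:c-u}, so the estimate cannot close; restoring the $-I$ in the expansion \eqref{eq:rkn} of $R_{k3}^n$ and performing the cancellation at the level of $(c_k^n\pd_x+J)\rho_{c_k^n}$ (rather than matching only the transport term and estimating the rest) recovers the paper's proof.
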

\begin{proof}
Except for \eqref{eq:c-u}, Lemma \ref{lem:modulation-u} can be shown in exactly
the same way as Lemmas \ref{lem:modulation} and \ref{lem:parametersupb}.
\par
Since \eqref{eq:c-u} can be shown in the same way as Lemma 2.2 in \cite{Mi2},
we only give an outline of the proof.
As in the proofs of \cite[Lemmas 2.2 and 2.5]{Mi2} and  \eqref{eq:ham28},
 we have
\begin{align*}
& \left|\theta_1(c_i^n)\dot{c}_i^n-c_i^n\la R_{i3}^n,\pd_xu_{c_i^n}\ra\right|
\lesssim 
\eps^3\left(\|v_0^n\|_{W^n(t)}^2+\sum_{k=1}^N\|v_k^n\|_{X_k^n(t)^*\cap W^n(t)}^2
+\eps^3e^{-k_1\eps d(t)}\right),\\
& R_{k3}^n= (H''(u_{c_k^n})-I)w_{k-1}^n+\sum_{\substack{1\le i,\,j\le k\\ i\ne j}}
O(|w_{k-1}^n|(|u_{c_i}^n||u_{c_j}^n|+|w_{k-1}^n|)),
\end{align*}
and
\begin{align*}
& \left| \la w_{k-1}^n,(H''(u_{c_k}^n)-I)\pd_xu_{c_k}^n\ra
+\frac{d}{dt}\la w_{k-1}^n,\rho_{c_k}^n(\cdot-x_k^n)\ra\right|
\\ \lesssim &
\eps^3
\left(\|v_0^n\|_{W^n(t)}+\sum_{k=1}^N\|v_k^n\|_{X_k^n(t)^*\cap W^n(t)}\right)^2
+\eps^6e^{-k_1\eps d(t)}.
\end{align*}
Combining the above, we obtain \eqref{eq:c-u}.
\end{proof}

The following energy estimates of $v_k^n(t)$ can be shown in the 
same way as  Lemma \ref{lem:speed-Hamiltonian}.
\begin{lemma}
  \label{lem:sp-Ham-u}
Let $u(t)$ and $v_i^n(t)$ $(0\le i\le N)$ be as in Lemma \ref{lem:modulation-u}
and let $c_{i,+}$ and $T$ be as in Theorem \ref{thm:1}.
Then there exist an $n_0\in \N$ and a $C>0$ such that for $n\ge n_0$
and $t\in[T,n]$,
$$\|v^n(t)\|_{l^2}^2\le C\left(\eps\sum_{i=1}^N|c_i^n(t)-c_{i,+}|+
\eps^{\frac32}\sum_{i=0}^N \|v_i^n\|_{W^n(t)}+\eps^3e^{-k_1\eps d(t)}\right),$$
\begin{align*}
\|v_k^n(t)\|_{l^2}^2 \le & C\left(\eps\sum_{i=1}^k|c_i^n(t)-c_{i,+}|
+\eps^{\frac32}\sum_{i=0}^k\|v_i^n\|_{W_k^n(t)}\right)
\\ & +C\eps^3\left(\sum_{i=1}^N\|v_i^n\|_{L^2(t,n;W^n(s)\cap X_i^n(s)^*)}^2
+e^{-k_1\eps d(t)}\right)\quad\text{for $1\le k\le N$.}
\end{align*}
\end{lemma}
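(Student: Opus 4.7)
The plan is to transplant the proof of Lemma \ref{lem:speed-Hamiltonian}, carrying the extra summand $v_0^n$ inside $w_k^n=\sum_{i=0}^k v_i^n$ through all the estimates. The key structural observation is that the evolution identity \eqref{eq:ukwk} survives unchanged: using the definition \eqref{eq:Rkdef} of $R_k^n$ with $w_0^n=v_0^n$ and summing telescopically, together with $\pd_t v_0^n=JH'(v_0^n)$, \eqref{eq:vkn} and \eqref{eq:secularmode}, one verifies
$$\pd_t(U_k^n+w_k^n)=JH'(U_k^n+w_k^n)+\tilde l_k^n+\sum_{i=1}^k(l_i^n-P_i^n JR_i^n),$$
so the framework for both the global and the local energy computations is available verbatim.

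For the global bound, I would use conservation of $H$ along $u=U_N^n+w_N^n$. Expanding $dH:=H(u(t))-\sum_i H(u_{c_{i,+}})$ around $U_N^n$ and invoking coercivity (P2) yields $C'\|w_N^n(t)\|_{l^2}^2\le dH-I_1-I_2$. The term $I_2$ gives the $\eps\sum_i|c_i^n-c_{i,+}|+\eps^3 e^{-k_1\eps d(t)}$ contribution from soliton self-interactions exactly as in the original lemma. In $I_1=\la H'(U_N^n),w_N^n\ra$, the identity $H'(u_{c_i^n})=-c_i^n J^{-1}\pd_x u_{c_i^n}$ coming from \eqref{eq:boundst} combined with the symplectic orthogonality \eqref{eq:orthv2k} kills the leading part of $\la H'(U_N^n),v_i^n\ra$ for $i\ge1$, but no such cancellation is available for $v_0^n$; hence $|I_1|\lesssim \eps^{\frac32}\sum_{i=0}^N\|v_i^n\|_{W^n(t)}$, and the sum now starts at $i=0$. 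Since $dH$ is $t$-independent and \eqref{eq:convergence} forces $H(u(t))\to\sum_i H(u_{c_{i,+}})$ as $t\to\infty$ (the cross-interactions decay with the soliton separation), one has $dH=0$, and the first inequality follows.

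For the local bound, since \eqref{eq:ukwk} holds unchanged, the decomposition $\frac{d}{dt}H(U_k^n+w_k^n)=\sum_{i=1}^6 II_i$ and the bounds on $II_1,\dots,II_6$ from \eqref{eq:ham22}--\eqref{eq:ham28} go through word-for-word, the sole difference being that each factor of $\|w_k^n\|_{W^n(t)}$ or $\|w_k^n\|_{l^2}$ now carries an extra $\|v_0^n\|_{W^n(t)}$ summand. Integrating \eqref{eq:v2kenloss} from $t$ to $n$ controls $H(U_k^n(t)+w_k^n(t))-H(U_k^n(n)+w_k^n(n))$, and the boundary term at $n$ is handled via $w_k^n(n)=v_0^n(n)$, whose $l^2$-norm tends to $0$ as $n\to\infty$ by \eqref{eq:convergence}. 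Feeding this back into the same coercive inequality as in the global step yields the second estimate.

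The main subtlety, and really the only new bookkeeping point, is that $v_0^n$ satisfies no symplectic orthogonality and its equation is fully nonlinear rather than linearized. This blocks the cancellation that would otherwise annihilate $\la H'(u_{c_i^n}),v_i^n\ra$ in the $i=0$ slot and propagates into $I_1$ and into the estimates for $II_2$, $II_4$, $II_6$. The essential verification is that each such intrusion is of order $\eps^{\frac32}\|v_0^n\|_{W^n(t)}$ linearly or $\eps^3\|v_0^n\|_{W^n(t)}^2$ quadratically, matching exactly the scalings already carried by the $v_i^n$ with $i\ge1$, so that the right-hand side takes the form stated in the lemma. Once this bookkeeping is in place, everything else is a direct transcription of Lemma \ref{lem:speed-Hamiltonian}.
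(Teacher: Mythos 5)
Your overall strategy coincides with the paper's: Lemma \ref{lem:sp-Ham-u} is asserted there to follow ``in the same way as'' Lemma \ref{lem:speed-Hamiltonian}, and your two structural observations are the right ones --- the identity \eqref{eq:ukwk} persists because $w_0^n=v_0^n$ solves the full equation $\pd_t v_0^n=JH'(v_0^n)$, so the telescoping of the $R_i^n$ still closes, and in the global estimate the cancellation $\la H'(u_{c_i^n}),v_k^n\ra=0$ ($i\le k$) fails only in the $i=0$ slot, which is why the sum now starts at $i=0$. Your remark that conservation of $H$ together with \eqref{eq:convergence} forces $dH=0$ is also correct and is in fact needed here: unlike in Section \ref{subsec:2.1}, the data at $t=n$ is not an exact sum of solitary waves, so the bound $|dH|\lesssim\eps^3e^{-k_1\eps h(n)}$ used for \eqref{eq:enorm} is unavailable, and without $dH=0$ the first inequality would pick up an uncontrolled term.

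The one genuine gap is your treatment of the boundary term in the local estimate. You integrate the analogue of \eqref{eq:v2kenloss} over $[t,n]$ and dispose of $H(U_k^n(n)+w_k^n(n))$ by saying that $w_k^n(n)=v_0^n(n)$ tends to $0$ in $l^2$. That does not suffice: the lemma claims an inequality with a constant uniform in $n\ge n_0$ and all $t\in[T,n]$, and for $1\le k\le N-1$ the terminal quantity $H(U_k^n(n)+v_0^n(n))-\sum_{i\le k}H(u_{c_{i,+}})$ contains $\la H'(U_k^n(n)),v_0^n(n)\ra+O(\|v_0^n(n)\|_{l^2}^2)$, i.e.\ a contribution of size $\eps^{\frac32}\|v_0^n(n)\|_{l^2}+\|v_0^n(n)\|_{l^2}^2$ which is not dominated by any term of the stated right-hand side evaluated at time $t$ (it is not controlled by $\|v_0^n\|_{W_k^n(t)}$, and bounding it by $\eps^3e^{-k_1\eps d(t)}$ would presuppose exactly the exponential smallness of $\|u(n)-U_{N,+}(n)\|_{l^2}$ that Proposition \ref{prop:expconv} is meant to establish, a circularity). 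The same remark applies to the integrated term $\eps^3\|v_0^n\|_{L^2(t,n;W^n(s))}^2$ that your word-for-word transplant of \eqref{eq:ham22}--\eqref{eq:ham28} produces, since the $L^2$-in-time sum in the statement starts at $i=1$; it has to be converted through Lemma \ref{lem:virial-0u} into a quantity $\lesssim\|v_0^n(n)\|_{l^2}^2$. So the honest output of your argument is the stated bound plus an extra $C\bigl(\eps^{\frac32}\|v_0^n(n)\|_{l^2}+\|v_0^n(n)\|_{l^2}^2\bigr)$ on the right (which vanishes for $k=N$ by your $dH=0$ observation); this extra term is harmless for the way the lemma is used in \eqref{eq:vknb}, but it must be stated and carried quantitatively rather than dismissed by qualitative convergence.
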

We have the following local energy estimates for large $n$.
\begin{lemma}
  \label{lem:virial-0u}
Let $v_0^n(t)$ be a solution of \eqref{eq:v0n}. Then there exists a positive
constant $C$ such that for every $n\in\N$,
\begin{equation*}
\sup_{t\in\R}\|v_0^n(t)\|_{l^2}+\eps^{\frac32}\|v_0^n\|_{L^2(-\infty,n;W^n(t))}
\le C\|v_0^n(n)\|_{l^2}.
\end{equation*}
\end{lemma}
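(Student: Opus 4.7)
My plan is to combine two standard Hamiltonian tools: energy conservation for the uniform $l^2$ bound, and a virial identity (a specialization of the one proved in Lemma \ref{lem:virialv2k}) for the weighted $L^2$ bound. The key physical point is that the weights $\psi_{a,i}$ are centered at points $x_i^n(t)$ moving rightward at speed $\dot{x}_i^n\approx c_{i,+}>1$, strictly exceeding the maximal group velocity (equal to $1$) of linearized FPU waves around the zero state; hence a small, purely dispersive solution cannot remain concentrated around the moving points $x_i^n(t)$, and its locally weighted $L^2$ norm is time-integrable.

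By (H1) and (P2), $H$ is coercive on a small $l^\infty$-neighbourhood of the origin, so $\|v\|_{l^2}^2\lesssim H(v)$ for $v$ small. Since \eqref{eq:convergence} forces $\|v_0^n(n)\|_{l^2}=\|u(n)-U_{N,+}(n)\|_{l^2}\to 0$, a continuity argument applied to $H(v_0^n(t))=H(v_0^n(n))$ shows, for $n$ large enough, that $v_0^n$ stays small for all time and $\sup_{t\in\R}\|v_0^n(t)\|_{l^2}\lesssim\|v_0^n(n)\|_{l^2}$.

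For the weighted bound, set $H_{0,i}(t):=\la h(v_0^n(t)),\psi_{a,i}(t)\ra_{l^2}$ and repeat the computation of Lemma \ref{lem:virialv2k} with $U_k^n\equiv0$ and $w_k^n=v_0^n$. In this specialization every soliton-dependent term ($l_i^n$, $R_i^n$, $\tilde{l}_k^n$, $U_{k,int}^n$, $P_i^nJR_i^n$) vanishes identically, so the analysis reduces to the analogues of $I$ and $II_1$ from that proof. These combine to give
$$\frac{d H_{0,i}}{dt}\geq \frac{\dot{x}_i^n-1-O(\|v_0^n\|_{l^\infty})}{2}\|\tpai v_0^n\|_{l^2}^2\geq \delta'\eps^2\|\tpai v_0^n\|_{l^2}^2$$
for some $\delta'>0$, where the second inequality uses $\dot{x}_i^n\geq c_{1,+}-O(\eps^{7/2})>1+k_1^2\eps^2/12$ from Lemma \ref{lem:modulation-u}, combined with the $l^2$ bound of the previous paragraph.

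Integrating from $t$ to $n$, summing over $i$, and using $H_{0,i}(t)\geq0$ together with $H_{0,i}(n)\lesssim\|v_0^n(n)\|_{l^2}^2$, then letting $t\to-\infty$, yields $\eps^2\sum_i\int_{-\infty}^n\|\tpai v_0^n\|_{l^2}^2\,ds\lesssim\|v_0^n(n)\|_{l^2}^2$. The pointwise comparison $a\sech^2(ay)\geq C\eps\,e^{-2k_1\eps|y|}$, valid because $a=\delta\eps$ with $\delta<k_1$, converts this into $\eps^3\int_{-\infty}^n\|v_0^n(s)\|_{W^n(s)}^2\,ds\lesssim\|v_0^n(n)\|_{l^2}^2$, which is the desired bound. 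The only minor bookkeeping issue is that $x_i^n(t)$ was originally defined only on $[T,n]$; for $t<T$ one simply extends it linearly, which preserves $\dot{x}_i^n>1$ and hence the monotonicity. Since the core analytic content — the virial monotonicity — is already contained in Lemma \ref{lem:virialv2k}, I do not foresee any serious obstacle beyond this routine adaptation.
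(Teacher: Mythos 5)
Your proposal is correct and follows essentially the same route as the paper, which omits the proof precisely because it is the argument you give: energy conservation (with coercivity of $H$ near $0$) for the uniform $l^2$ bound, and the virial/monotonicity computation of Lemma \ref{lem:virialv2k} specialized to a zero soliton background ($U\equiv 0$, $w=v_0^n$), as in \cite[Lemma 3.2]{Mi2}, for the weighted $L^2$-in-time bound. The only caveat — that the absorption of the $O(\|v_0^n\|_{l^\infty})$ and $O(a^2)$ errors into $(\dot{x}_i^n-1)/2\gtrsim\eps^2$ requires $\|v_0^n(n)\|_{l^2}$ small, hence $n\ge n_0$ — is implicit in the paper's use of the lemma as well, so it does not constitute a gap.
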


\begin{lemma}
  \label{lem:locenergy-ku}
Let $u(t)$ and $v_i^n(t)$ $(0\le i\le N)$ be as in Lemma \ref{lem:modulation-u}
and let $c_{i,+}$ and $T$ be as in Theorem \ref{thm:1}.
Then there exist $n_0\in\N$ and a positive constant $C$ such that 
for $n\ge n_0$, $t_1\in[T,n]$ and $1\le k\le N$,
\begin{align*}
&\la\psi_{k_1\eps,N}w_k^n(t_1),w_k^n(t_1)\ra_{l^2}
+ \eps^{\frac32}\|w_k^n\|_{L^2(t_1,n;W^n(t))}
\\ \le & C\eps^{\frac32}\left(\|v_k^n\|_{L^2(t_1,n;X_k^n(t)^*)}
+\sum_{i=0}^{k-1}\|w_i^n\|_{L^2(t_1,n;W_k^n(t))}+e^{-k_1\eps d(t_1)}\right),
\end{align*} 
$$\|v_k^n(t_1)\|_{X_k^n(t_1)}+\eps^{\frac32}\|v_k^n\|_{L^2(t_1,n;X_k^n(t))}
\le C\eps^{\frac32}\left(\|w_{k-1}^n\|_{L^2(t_1,n;W_k^n(t))}+e^{-k_1\eps d(t_1)}\right).$$
\end{lemma}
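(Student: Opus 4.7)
The plan is to prove both estimates by virial-identity arguments modeled on Lemmas \ref{lem:virialv2k} and \ref{lem:vNXN}, adapted to the uniqueness setting where $w_k^n = \sum_{i=0}^k v_i^n$ contains the pure FPU component $v_0^n$ in addition to the modulation corrections. Because $u = U_N^n + w_N^n$ still solves \eqref{eq:FPU}, the master identity \eqref{eq:ukwk} for $\pd_t(U_k^n + w_k^n)$ remains valid, so the bulk of the computations of Section \ref{subsec:2.1} carry over verbatim; the only novelty is that $w_0^n = v_0^n$ now contributes to sums which previously started at $i = 1$.

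For the first estimate I would introduce $H_{k,i}(t) := \la h(U_k^n + w_k^n) - h(U_k^n) - h'(U_k^n)\cdot w_k^n, \psi_{k_1\eps, i}\ra_{l^2}$ with $\psi_{k_1\eps, i}(t,x) = 1 - \tanh k_1\eps(x - x_i^n(t))$ and repeat the $dH_{k,i}/dt$ computation from Lemma \ref{lem:virialv2k}. The coercive piece $I + II_1 \ge \delta'\eps^2\|\tilde{\psi}_{k_1\eps, i} w_k^n\|^2_{l^2}$ and the error bounds on $II_2$--$II_6$ go through as written, except the $W_k^n(t)$ sums now begin at $i = 0$. Summing in $i$ from $1$ to $N$, integrating from $t_1$ to $n$, and using $w_k^n(n) = v_0^n(n)$ bounded in $l^2$ by Lemma \ref{lem:virial-0u} delivers the first inequality.

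For the second estimate I would consider $\Phi_k(t) := \|v_k^n(t)\|^2_{X_k^n(t)} = \la e^{-2k_1\eps(\cdot - x_k^n(t))} v_k^n, v_k^n\ra$ and differentiate in $t$. The transport coming from $\dot{x}_k^n$ gives $+2 k_1\eps \dot{x}_k^n \Phi_k$, while skew-adjointness of $J$ combined with the commutator $[J, e^{-2k_1\eps\cdot}]$ supplies $-2\sinh(k_1\eps)\Phi_k$ modulo a term $O(\eps^{7/2}\Phi_k)$ from $JH''(U_k^n) - J$. The combined coefficient $\dot{x}_k^n k_1\eps - \sinh(k_1\eps) = \tfrac{k_1(k_k^2 - k_1^2)}{6}\eps^3 + O(\eps^5 + \eps^2 e^{-k_1\eps d(t)})$ is $\gtrsim \eps^3$ for $k \ge 2$ since $k_k > k_1$. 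The source $\la e^{-2k_1\eps(\cdot - x_k^n)} v_k^n, l_k^n + Q_k^n J R_k^n\ra$ is controlled via \eqref{eq:alphabetaupb} for $l_k^n$ and via \eqref{eq:rkn} for $R_k^n$, with the dominant piece $\|R_{k3}^n\|_{X_k^n(t)} \lesssim \eps^2\|w_{k-1}^n\|_{W_k^n(t)}$ (since $u_{c_k^n}$ localizes where the $X_k^n$ weight is $O(1)$), and $R_{k1}^n, R_{k2}^n$ absorbable via Proposition \ref{prop:vNupb} and Claim \ref{cl:intsize}. This yields
\[
\frac{d\Phi_k}{dt} \ge c\eps^3 \Phi_k - C\eps^3\bigl(\|w_{k-1}^n\|^2_{W_k^n(t)} + \eps^3 e^{-2k_1\eps d(t)}\bigr),
\]
and integrating backward from $\Phi_k(n) = 0$ to $t = t_1$ bounds both $\Phi_k(t_1)$ and the space-time piece $\eps^3\int_{t_1}^n \Phi_k(s)\,ds$ by the right-hand side, which is the second estimate after taking square roots.

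The main obstacle I expect is securing the positive coefficient of order $\eps^3$ in the second virial identity. The cancellation between $c_{k,+}k_1\eps$ and $\sinh(k_1\eps)$ produces the coercive gain only because $k_k > k_1$, and this thin gain must dominate the error from $JH''(U_k^n) - J$ (present already at order $\eps^{3/2}$ pointwise) together with the boundary and commutator contributions of $J$ against the exponential weight. This mirrors the spectral-gap mechanism underlying Lemma \ref{lem:linearstability} but has to be reproduced in the non-starred space $X_k^n(t)$, uniformly in $k = 2,\ldots, N$; the $k=1$ case is handled separately by a direct Gronwall argument since the orthogonality \eqref{eq:orthv2k} and $v_1^n(n)=0$ leave no competing soliton mode to dominate.
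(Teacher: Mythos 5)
Your treatment of the first inequality is essentially the paper's: the paper omits the proof precisely because it repeats Lemma \ref{lem:virialv2k}, with the sums starting at $i=0$ and the boundary term $H_{k,i}(n)\approx\|v_0^n(n)\|_{l^2}^2$ handled through Lemma \ref{lem:virial-0u}; that part of your proposal is fine.

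The second inequality is where there is a genuine gap. The intended argument (as the paper indicates) runs as in Lemma \ref{lem:vNXN}: one applies the strong linear stability estimate of Lemma \ref{lem:linearstability} to the equation \eqref{eq:vkn} for $v_k^n$, with forcing $l_k^n+Q_k^nJR_k^n$, using \eqref{eq:rkn} so that $R_{k3}^n$ produces the $\|w_{k-1}^n\|_{W_k^n(t)}$ source, and the $L^2$-in-time bound then follows from Young's inequality applied to the kernel $e^{b\eps^3(t-s)}\bigl(1+\eps^{-\frac32}(s-t)^{-\frac12}\bigr)$. You instead differentiate $\Phi_k(t)=\|v_k^n(t)\|_{X_k^n(t)}^2$ and try to extract an $O(\eps^3)$ coercive coefficient from $\dot x_k^n k_1\eps-2\sinh(k_1\eps/2)$ while treating $J\bigl(H''(U_k^n)-I\bigr)$ as an error of size $O(\eps^{\frac72})\Phi_k$. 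That size is wrong: $H''(U_k^n)-I=O(U_k^n)=O(\eps^2)$ in $l^\infty$, and this potential is concentrated exactly where the weight $e^{-2k_1\eps(\cdot-x_k^n(t))}$ is of order one, so its contribution is $O(\eps^2)\Phi_k$ and overwhelms the $O(\eps^3)$ transport gain by a full power of $\eps$; a Gronwall argument then gives growth like $e^{C\eps^2(n-t)}$, which destroys the estimate. This is not repaired by the orthogonality \eqref{eq:orthv2k} at the level of a commutator computation, nor by your separate treatment of $k=1$ (the linearization around $u_{c_1}$ still carries the same $O(\eps^2)$ potential and its neutral modes $\pd_xu_{c_1},\pd_cu_{c_1}$). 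Converting the neutral-mode orthogonality into exponential decay of the weighted norm is precisely the content of Lemma \ref{lem:linearstability}, which rests on the Friesecke--Pego linear theory and the KdV comparison of \cite[Section 6]{Mi2}; your proposal replaces this essential ingredient with an energy identity that cannot close, so the second estimate remains unproved as written.
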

Since Lemmas \ref{lem:virial-0u} and  \ref{lem:locenergy-ku} can be proved
in the same way as \cite[Lemma 3.2]{Mi2} and Lemmas \ref{lem:virialv2k} and
\ref{lem:vNXN}, we omit the proof.
Now we are in position to prove Proposition \ref{prop:expconv}.
\begin{proof}[Proof of Proposition \ref{prop:expconv}]
Lemmas \ref{lem:modulation-u} and \ref{lem:virial-0u} imply that
for $1\le k\le N$ and $t\in[T,n]$,
\begin{equation}
\label{eq:vknw}
\|v_k^n(t)\|_{W^n(t)\cap X_k^n(t)^*}
+\eps^{\frac32}\|v_k^n(t)\|_{L^2(t,n;W^n(s)\cap X_k^n(s)^*)}
\le C(\|v_0^n(n)\|_{l^2}+\eps^{\frac32}e^{-k_1\eps d(t)}),
\end{equation}
where $C$ is a positive constant independent of $n\ge n_0$.
It follows from \eqref{eq:c-u}, Claim \ref{cl:rhoc} and the above that
\begin{equation}
  \label{eq:cinb}
  \begin{split}
\eps^{-2}|c_i^n(t)-c_{i,+}|\lesssim &
\eps^{-\frac32}(\|v_0^n(n)\|_{l^2}+\|w_{i-1}^n(t)\|_{W^n(t)})
\\ & +\sum_{i=0}^N\|v_i^n\|_{L^2(t,n;W^n(s))}^2
+\sum_{i=1}^N\|v_i^n\|_{L^2(t,n;X_i^n(s)^*)}^2+e^{-k_1\eps d(t)}
\\ \lesssim & \eps^{-\frac32}(\|v_0^n(n)\|_{l^2}+\eps^{\frac32}e^{-k_1\eps d(t)})  
  \end{split}
\end{equation}
for every $t\in[T,n]$ and $n\ge n_0$ and it follows from
Lemma \ref{lem:sp-Ham-u} that
\begin{equation}
  \label{eq:vknb}
\|v_k^n(t)\|_{l^2}^2\lesssim  \eps^{\frac32}
(\|v_0^n(n)\|_{l^2}+\eps^{\frac32}e^{-k_1\eps d(t)}).
\end{equation}
Moreover, Lemma \ref{lem:modulation-u} and \eqref{eq:vknw} imply that
\begin{gather}
\label{eq:xc-n}
|\dot{c}_i^n(t)|+\eps^3|\dot{x}_i^n(t)-c_i^n(t)|\lesssim
\eps^{\frac72}(\|v_0^n(n)\|_{l^2}+\eps^{\frac32}e^{-k_1\eps d(t)}),
\\ \label{eq:xc-n2}
|\ddot{c}_i^n(t)|+\eps^3|\ddot{x}_i^n(t)|\lesssim
\eps^{\frac92}(\|v_0^n(n)\|_{l^2}+\eps^{\frac32}e^{-k_1\eps d(t)}). 
\end{gather}
\par

Let $I$ be a compact subinterval of $[T,\infty)$.
By \eqref{eq:cinb}--\eqref{eq:xc-n2} and the assumption \eqref{eq:convergence},
we see that $x_k^n(t)$ and $c_k^n(t)$ are bounded in $C^2(I)$.
Thus by Arzela's theorem, there exit subsequences
$\{c_k^{n_j}(t)\}_{j=1}^\infty$, $\{x_k^{n_j}(t)\}_{j=1}^\infty$ and
$C^1$-functions $c_k(t)$ and $c_k(t)$ such that for any compact interval
$I\subset [T,\infty)$,
$$\lim_{j\to\infty}\|c_k^{n_j}(t)-c_k(t)\|_{C^1(I)}
=\lim_{j\to\infty}\|x_k^{n_j}(t)-x_k(t)\|_{C^1(I)}=0.$$
Since $\lim_{n\to\infty}\|v_0^n(n)\|_{l^2}=0$, it follows from \eqref{eq:vknb}
and \eqref{eq:xc-n} that for $1\le k\le N$ and $t\ge T$,
\begin{gather*}
\limsup_{n\to\infty}\sum_{k=0}^N\|v_k^n(t)\|_{l^2}\lesssim\eps^{\frac32}e^{-k_1\eps d(t)},
\quad 
|\dot{c}_k(t)|+\eps^3|\dot{x}_k(t)-c_k(t)|\lesssim \eps^5e^{-k_1\eps d(t)}.
\end{gather*}
Thus there exist real constants $\tilde{c}_{k,+}$ and $\tilde{\gamma}_{k,+}$
such that
\begin{equation}
\label{eq:convrate}
|c_k(t)-\tilde{c}_{k,+}|+\eps^3|x_k(t)-\tilde{c}_{k,+}t-\tilde{\gamma}_{k,+}|
\lesssim \eps^2e^{-k_1\eps d(t)}.
\end{equation}
Let $\widetilde{U}_{N,+}(t)=\sum_{k=1}^Nu_{\tilde{c}_{k,+}}
(\cdot-\tilde{c}_{k,+}t-\tilde{\gamma}_{k,+})$.
By \eqref{eq:convrate},
\begin{align*}
&  \|u(t)-\widetilde{U}_{N,+}(t)\|_{l^2}
\\ \lesssim & \lim_{j\to\infty}\sum_{k=1}^N\|u_{c_k^{n_j}(t)}(\cdot-x_k^{n_j}(t))
-u_{\tilde{c}_{k,+}}(\cdot-\tilde{c}_{k,+}t-\tilde{\gamma}_{k,+})\|_{l^2}
+\limsup_{j\to\infty}\sum_{k=0}^N\|v_k^{n_j}(t)\|_{l^2}
\\ \lesssim &
\eps^{-\frac12}\sum_{k=1}^N\left(|c_k(t)-\tilde{c}_{k,+}|
+\eps^3|x_k(t)-\tilde{c}_{k,+}t-\tilde{\gamma}_{k,+}|\right)
+\eps^{\frac32}e^{-k_1\eps d(t)}
\\ \lesssim & \eps^{\frac32}e^{-k_1\eps d(t)}.
\end{align*}
Combining \eqref{eq:convergence} and the above, we have
\begin{align*}
&  \|U_{N,+}(t)-\widetilde{U}_{N,+}(t)\|_{W(t)}
\\ \le &
 \|u(t)-U_{N,+}(t)\|_{l^2}+\|u(t)-\widetilde{U}_{N,+}(t)\|_{W(t)}\to0
\quad\text{as $t\to\infty$,}
\end{align*}
and $\widetilde{U}_{N,+}(t)=U_{N,+}(t)$.
This completes the proof of Proposition \ref{prop:expconv}.
\end{proof}

Finally, we will prove the uniqueness of solutions to \eqref{eq:FPU}
that satisfy \eqref{eq:convergence}.
\begin{proof}[Proof of of the latter part of Theorem \ref{thm:1}]
Let $u(t)$ and $\bar{u}(t)$ be solutions of \eqref{eq:FPU} satisfying
\eqref{eq:convergence}. By Proposition \ref{prop:expconv},
$$\|u(t)-U_{N,+}(t)\|_{l^2}+\|\bar{u}(t)-U_{N,+}(t)\|_{l^2}
=O(\eps^{\frac32}e^{-k_1\eps d(t)}).$$
\par
First we decompose $\bar{u}(t)$ in the same way as $u(t)$. 
Let
$$\bar{u}(t,\cdot)=\sum_{k=1}^Nu_{\bar{c}_k^n(t)}(\cdot-\bar{x}_k^n(t))
+\sum_{k=0}^N\bar{v}_k^n(t,\cdot),$$
where $\bar{v}_{0n}(t)$ and $\bar{v}_{kn}(t)$ $(1\le k\le N)$ are solutions of
$$\pd_t\bar{v}_0^n=JH'(\bar{v}_0^n),\qquad
\bar{v}_{0n}(n)=\bar{u}(n)-U_{N,+}(n),$$
$$\pd_t\bar{v}_k^n=JH''(\overline{U}_k^n(t))\bar{v}_k^n+\bar{l}_k^n
+\overline{Q}_k^n(t)J\overline{R}_k^n,\qquad \bar{v}_k^n(n)=0,$$
$\bar{w}_k^n=\sum_{i=0}^k\bar{v}_i^n$ for $k\ge 0$, $\overline{U}_0^n(t)=0$,
$\overline{U}_k^n(t)=\sum_{i=1}^ku_{\bar{c}_k^n(t)}(\cdot-\bar{x}_k^n(t))$
for $k\ge1$,
\begin{gather*}
\overline{R}_k^n(t)=H'(\overline{U}_k^n+\bar{w}_k^n)
-H'(\overline{U}_{k-1}^n+\bar{w}_{k-1}^n)-H'(u_{\bar{c}_k^n}(\cdot-\bar{x}_k^n))
-H''(\overline{U}_k^n)\bar{v}_k^n,\\
\bar{l}_k^n(t)=\sum_{i=1}^k\left(\bar{\alpha}_{ik}^n(t)
\pd_cu_{\bar{c}_i^n(t)}(\cdot-\bar{x}_k^n(t))
+\bar{\beta}_{ik}^n(t)\pd_xu_{\bar{c}_i^n(t)}(\cdot-\bar{x}_k^n(t))\right),
\end{gather*}
and  $\bar{x}_i^n(t)$, $\bar{c}_i^n(t)$, $\bar{\alpha}_{ik}^n(t)$ and
$\bar{\beta}_{ik}^n(t)$ are chosen in the same way as 
$x_i^n(t)$, $c_i^n(t)$, $\alpha_{ik}^n(t)$ and $\beta_{ik}^n(t)$.
Then $\bar{u}(t)=\overline{U}_N^n(t)+\bar{w}_N^n(t)$,
$\bar{x}_k^n(n)=c_{k,+}n+\gamma_{k,+}$, $\bar{c}_k^n(n)=c_{k,+}$,
\begin{gather*}
\la \bar{v}_k^n(t),J^{-1}\pd_xu_{\bar{c}_i^n(t)}(\cdot-\bar{x}_i^n(t))\ra
=\la \bar{v}_k^n(t),J^{-1}\pd_cu_{\bar{c}_i^n(t)}(\cdot-\bar{x}_i^n(t))\ra=0  
\end{gather*}
for $1\le i\le k\le N$ and $t\in[T,n]$.
\par

Let $\dc_k^n=c_k^n-\bar{c}_k^n$, $\dg_k^n=x_k^n-\bar{x}_k^n$,
$\dv_k^n=v_k^n-\bar{v}_k^n$ and $\dw_k^n=w_k^n-\bar{w}_k^n$.
Proposition \ref{prop:expconv} and Lemma \ref{lem:virial-0u} imply that
there exist a $C>0$ and an $n_0\in\N$ such that for $t\in[T,n]$ and $n\ge n_0$,
$|\dg_k^n(t)|\le C\eps^{-1}e^{-k_1\eps d(t)}$ and
\begin{equation}
  \label{eq:bv0n}
 \sup_t(\|v_0^n(t)\|_{l^2}+\|\bar{v}_0^n(t)\|_{l^2})
\lesssim \|v_0^n(n)\|_{l^2}+\|\bar{v}_0^n(n)\|_{l^2}\le C\eps^{\frac32}e^{-k_1\eps d(n)}.
\end{equation}
Following the proof of Lemmas  \ref{lem:cauchyxc}, \ref{lem:vkxkdif}
and \ref{lem:cauchy-en} and making use of \eqref{eq:bv0n},
we see that there exists a positive constant $C$ such that
for $n\ge n_0$ and $t\in[T,n]$,
\begin{align*}
& \sup_{s\in[t,n]}e^{\frac12k_1\eps d(s)}(|\dc_k^n(s)|+\eps^3|\dg_k^n(s)|)
\\ \le  & C\eps^2e^{-\frac12k_1\eps d(n)}+
C\eps^{\frac12}\left(\sum_{i=1}^{k-1}\|\dv_i^n\|_{Y_n(t)}
+e^{-k_1\eps d(t)}\sum_{i=1}^N\|\dv_i^n\|_{Y_n(t)\cap Z_{i,n}(t)}\right),
\end{align*}
\begin{align*}
 \|\dv_k^n\|_{Z_{k,n}(t)} \le & C\eps^{\frac32}e^{-\frac12k_1\eps d(n)}
+C\left(\sum_{i=1}^{k-1}\|\dv_i^n\|_{Y_n(t)}
+\eps^{-\frac12}e^{-k_1\eps d(t)}\sum_{i=1}^{N}\|\dv_i^n\|_{Y_n(t)}\right),
\end{align*}
and
\begin{equation*}
\|\dv_k^n\|_{Y_n(t)}^2 \le C\left(\eps^3e^{-k_1\eps d(n)}
+\sum_{i=1}^{k-1}\|\dv_i^n\|_{Y_n(t)}^2
+\eps^{-1}e^{-k_1\eps d(t)}\sum_{i=1}^N\|\dv_i^n\|_{Y_n(t)}^2\right).
\end{equation*}
Combining the above, we have for every $t\in[T,n]$ and $n\ge n_0$,
$$|\dc_k^n(t)|+\eps^3|\dg_k^n(t)|+\eps^{\frac12}\|\dv_k^n(t)\|_{l^2}
\lesssim \eps^2e^{-\frac12k_1\eps(d(t)+d(n))}.$$
Therefore
\begin{align*}
& \sup_{s\in[t,n]} \|u(s)-\bar{u}(s)\|_{l^2}
\\ \le & 
\sup_{s\in[t,n]}\left(\sum_{k=0}^N\|\dv_k^n(s)\|_{l^2}+\sum_{k=1}^N
\|u_{c_k^n(s)}(\cdot-x_k^n(s))-u_{\bar{c}_k^n(s)}(\cdot-\bar{x}_k^n(s))\|_{l^2}\right)
\\ \le & C\eps^{\frac32}e^{-\frac12k_1\eps d(n)},
\end{align*}
where $C$ is a positive constant independent of $n\ge n_0$.
Letting $n\to\infty$, we have $$\sup_{t\ge T}\|u(t)-\bar{u}(t)\|_{l^2}=0.$$
Thus we complete the proof of Theorem \ref{thm:1}.
\end{proof}
\section*{Acknowledgment}
This research is supported by Grant-in-Aid for Scientific
Research (No. 21540220).

\appendix
\section{Size of $u_c$ and $\rho_c$}
\label{sec:size}
In this section, we recollect estimates on the size of solitary waves $u_c$
and the size of interaction between solitary waves for the sake of
self-containedness.
See \cite[Appendix A]{Mi2} for the proof.
\begin{claim}
  \label{cl:ucsize}
Let $c=1+\frac{1}{6}\eps^2$, $a\in(\frac14\eps,\frac74\eps)$ and
let $i$ and $j$ be nonnegative integers. Then
\begin{align*}
& \|\pd_x^i\pd_c^ju_c\|_{l^2_a\cap l^2_{-a}}=O(\eps^{\frac32+i-2j}),
\quad \|J^{-1}\pd_x^i\pd_c^ju_c\|_{l^2_{-a}}=O(\eps^{\frac12+i-2j}),
\\ &
\|\pd_x^i\pd_c^ju_c\|_{l^\infty_{_a}\cap l^\infty_{-a}}=O(\eps^{2+i-2j}),
\quad \|J^{-1}\pd_x^i\pd_c^ju_c\|_{l^\infty\cap l^\infty_{-a}}=O(\eps^{1+i-2j}).
\end{align*}
\end{claim}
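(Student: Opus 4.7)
The claim is a scaling computation based on property (P4). The plan is to represent $u_c$ via a KdV-scaled profile, then track how the operations $\pd_x$, $\pd_c$, $J^{-1}$ and the weights $e^{\pm a|n|}$ each modify the power of $\eps$. Introduce the stretched variable $y=\eps x$ and set $\tilde{r}_c(y;\eps) := \eps^{-2} r_c(y/\eps)$, so that (P4) reads $\tilde{r}_c(\cdot;\eps)=\sech^2 y+O(\eps^2)$ in $H^5(\R;e^{2b|y|}dy)$ for any $b\in[0,2)$, with the first two $\eps$-derivatives bounded in the same norm. The second component $p_c$ admits an analogous scaling because \eqref{eq:boundst} expresses $p_c$ as a sum of discrete translates of $r_c$. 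The hypothesis $a\in(\tfrac14\eps,\tfrac74\eps)$ corresponds to $a/\eps\in(\tfrac14,\tfrac74)\subset(0,2)$, which fits exactly the admissible range in (P4).

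The derivative estimates reduce to a Leibniz bookkeeping in $\eps$. Every $\pd_x$ applied to $r_c=\eps^2\tilde{r}_c(\eps x;\eps)$ contributes one factor of $\eps$ and preserves the localization scale $\eps^{-1}$. Since $c=1+\eps^2/6$, we have $\pd_c=(3/\eps)\pd_\eps$; each $\pd_c$ then picks up a factor $3/\eps$ together with a $\pd_\eps$, and in the worst case that $\pd_\eps$ hits the prefactor $\eps^2$ or differentiates through the argument $\eps x$ (the latter giving $x\,\pd_y\tilde{r}_c=\eps^{-1}y\,\pd_y\tilde{r}_c$), the net contribution is $\eps^{-2}$ per $\pd_c$. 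Terms in which $\pd_\eps$ instead acts on $\tilde{r}_c(\cdot;\eps)$ as a slow parameter are at most $O(1)$ by (P4) and are absorbed. Converting back to discrete norms via Riemann sums yields $\|\phi(\eps\cdot)\|_{l^2_{\pm a}}\simeq\eps^{-1/2}\|e^{\pm(a/\eps)|\cdot|}\phi\|_{L^2(\R)}$ and $\|\phi(\eps\cdot)\|_{l^\infty_{\pm a}}\simeq\|e^{\pm(a/\eps)|\cdot|}\phi\|_{L^\infty(\R)}$. Combining these with the scaling above produces the first and third inequalities of the claim.

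For the $J^{-1}$-estimates, I use the explicit formula \eqref{eq:J-1}, which makes $J^{-1}$ a one-sided cumulative summation on each component. For any $f$ concentrated on length $\eps^{-1}$ with $\|f\|_{l^\infty}=O(\eps^\alpha)$, one has $\|f\|_{l^1}=O(\eps^{\alpha-1})$, so
\[
\|J^{-1}f\|_{l^\infty}\le\|f\|_{l^1}=O(\eps^{\alpha-1}).
\]
On $l^2_{-a}$ with $a=O(\eps)$, the geometric sum $\sum_{k\ge 0}e^{k\pd}$ is bounded with norm $(1-e^{-a})^{-1}=O(\eps^{-1})$, hence $\|J^{-1}f\|_{l^2_{-a}}\lesssim\eps^{-1}\|f\|_{l^2_{-a}}$. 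Applying these two bounds to $f=\pd_x^i\pd_c^j u_c$ and using the first and third inequalities already proved yields the second and fourth inequalities.

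The main obstacle is the precise bookkeeping of $\eps$-powers under iterated $\pd_c$: several chain-rule branches produce terms of comparable formal size, and one must check that none exceeds the stated $\eps^{2+i-2j}$ pointwise bound. Once the KdV-scaled ansatz from (P4) is set up, however, each remaining step is a routine discrete--continuous comparison on functions localized at scale $\eps^{-1}$, essentially reducing to the $L^p$-boundedness of derivatives of $\sech^2$ and its $\eps$-corrections.
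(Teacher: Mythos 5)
Your overall strategy---rescaling $r_c$ via (P4), counting powers of $\eps$ through $\pd_x$, $\pd_c=(3/\eps)\pd_\eps$, the Riemann-sum comparison of discrete and continuous weighted norms, and then treating $J^{-1}$ through its explicit cumulative-sum form \eqref{eq:J-1}---is in substance the argument behind the proof the paper points to (the paper itself gives none and defers to Appendix A of \cite{Mi2}), and your power counting $\eps^{1}$ per $\pd_x$, $\eps^{-2}$ per $\pd_c$, $\eps^{-1/2}$ for the $l^2$ localization, $\eps^{-1}$ for $J^{-1}$ is correct. The first concrete gap is the range of derivatives: the claim is asserted for \emph{all} nonnegative integers $i,j$, whereas (P4) controls only $\pd_\eps^j$ for $j\le 2$ and only $H^5$ regularity in the scaled variable, so your Leibniz bookkeeping covers only finitely many derivatives. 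For general $i,j$ you must invoke the full Friesecke--Pego asymptotics of \cite{FP1} or bootstrap by differentiating the profile equation \eqref{eq:boundst} in $x$ and $c$; the same remark applies to your one-line treatment of $p_c$, where the relation $p_c=c\sum_{k\ge0}e^{k\pd}\pd_xr_c$ must itself be estimated at this scaling rather than asserted.

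The second point concerns the $J^{-1}$ estimates. The structural reason these lose exactly one power of $\eps$ and are stated only in $l^2_{-a}$ and $l^\infty\cap l^\infty_{-a}$, not in the $+a$-weighted spaces, is that $J^{-1}u_c$ does not decay on one side: by \eqref{eq:J-1} its components tend, as $n\to-\infty$, to the nonzero constants $-\la p_c,1\ra$ and $-\la r_c,1\ra$, each of size $O(\eps)$ (compare the identity in Claim \ref{cl:j-1}). Your $l^1\to l^\infty$ bound is consistent with this, but your $l^2_{-a}$ bound via $\|\sum_{k\ge0}e^{k\pd}\|_{B(l^2_{-a})}=(1-e^{-a})^{-1}$ holds only if the $l^2_{-a}$-weight is the one-sided monotone weight (increasing in the direction of the shift); with a two-sided weight $e^{-a|n|}$ the operator $\sum_{k\ge0}e^{k\pd}$ is unbounded, and one must instead split at the wave's location: to the left, $|J^{-1}f|\le\|f\|_{l^1}=O(\eps^{1+i-2j})$ against a weight whose $l^2$-mass is $O(a^{-1/2})=O(\eps^{-1/2})$; to the right, the tail sums decay at rate $\approx 2\kappa(c)\approx2\eps$, which beats $a\le\tfrac74\eps$. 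Since the paper never fixes the convention for $l^2_{\pm a}$, you should state which weight you use and supply this weighted tail estimate; the same estimate is what is missing for the $l^\infty_{-a}$ part of the fourth inequality, which does not follow from the unweighted $l^1\to l^\infty$ bound alone. With these two repairs your proof is complete and coincides with the cited one.
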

\begin{claim}
  \label{cl:intsize}
Let $0<k_1<k_2$ and $a\in[0,\frac74\eps)$.
Then there exists an $\eps_*>0$ such
that if $\eps\in(0,\eps_*)$ and $c_i=1+\frac{k_i^2\eps^2}6$ for $i=1$, $2$,
\begin{align*}
& \|\pd_x^{\alpha_1}\pd_c^{\beta_1}u_{c_1}(\cdot-x_1)
\pd_x^{\alpha_2}\pd_c^{\beta_2}u_{c_1}(\cdot-x_2)\|_{l^\infty}
=O(\eps^{4+\alpha_1+\alpha_2-2(\beta_1+\beta_2)}
e^{-k_1a|x_2(t)-x_1(t)|}),
\\ &
\|\pd_x^{\alpha_1}\pd_c^{\beta_1}u_{c_1}(\cdot-x_1)
\pd_x^{\alpha_2}\pd_c^{\beta_2}u_{c_1}(\cdot-x_2)\|_{l^1}
=O(\eps^{3+\alpha_1+\alpha_2-2(\beta_1+\beta_2)}e^{-k_1a|x_2(t)-x_1(t)|}).
\end{align*}
\end{claim}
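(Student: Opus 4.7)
The plan is to reduce this bilinear bound to a pointwise product of the known exponential decay estimates of Claim \ref{cl:ucsize} and then exploit the triangle inequality $|n-x_1|+|n-x_2|\ge |x_2-x_1|$ to convert spatial decay in $n$ into decay in the separation $|x_2-x_1|$.

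First I would promote the weighted $l^\infty$ bounds of Claim \ref{cl:ucsize} to pointwise estimates. Because $\|\pd_x^\alpha\pd_c^\beta u_c\|_{l^\infty_{a'}\cap l^\infty_{-a'}}=O(\eps^{2+\alpha-2\beta})$ holds throughout the range $a'\in(\tfrac14\eps,\tfrac74\eps)$, and hence also for any $a$ in $[0,\tfrac74\eps)$ after fixing some representative weight slightly larger than the target $a$, one obtains
\begin{equation*}
|\pd_x^{\alpha}\pd_c^{\beta}u_{c}(n-x_0)|\lesssim \eps^{2+\alpha-2\beta}e^{-a|n-x_0|}\qquad \text{for all $n\in\Z$, $x_0\in\R$, $c\in\{c_1,c_2\}$.}
\end{equation*}
Multiplying the two such bounds, one for each factor, gives the pointwise inequality
\begin{equation*}
\bigl|\pd_x^{\alpha_1}\pd_c^{\beta_1}u_{c_1}(n-x_1)\,\pd_x^{\alpha_2}\pd_c^{\beta_2}u_{c_2}(n-x_2)\bigr|\lesssim \eps^{4+\alpha_1+\alpha_2-2(\beta_1+\beta_2)}\,e^{-a(|n-x_1|+|n-x_2|)}.
\end{equation*}
Since $|n-x_1|+|n-x_2|\ge |x_2-x_1|$, the right side is dominated by $\eps^{4+\alpha_1+\alpha_2-2(\beta_1+\beta_2)}e^{-a|x_2-x_1|}$, uniformly in $n$, which yields the $l^\infty$ estimate (the factor $k_1$ in the stated exponential rate is absorbed by choosing $a$ appropriately, since any $a$ in the admissible range gives the same scaling).

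For the $l^1$ bound, I would split the sum over $n\in\Z$ into the three regions determined by $x_1$ and $x_2$ (assuming WLOG $x_1\le x_2$). On the middle interval $n\in[x_1,x_2]$ one has $|n-x_1|+|n-x_2|=x_2-x_1$ and the summand contributes $(x_2-x_1+1)e^{-a(x_2-x_1)}$. On each tail, $|n-x_1|+|n-x_2|=(x_2-x_1)+2\min(|n-x_1|,|n-x_2|)$, giving a geometric tail of size $O(a^{-1})e^{-a(x_2-x_1)}=O(\eps^{-1})e^{-a(x_2-x_1)}$. Combining, $\sum_n e^{-a(|n-x_1|+|n-x_2|)}\lesssim \eps^{-1}e^{-a'|x_2-x_1|}$ for any $a'<a$ (we absorb the linear prefactor $x_2-x_1$ into the exponential at the cost of shrinking the rate slightly). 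Multiplying by the $\eps^{4+\alpha_1+\alpha_2-2(\beta_1+\beta_2)}$ pointwise prefactor gives the claimed $l^1$ bound with one less power of $\eps$.

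There is no serious analytic obstacle; the estimate is essentially combinatorial once Claim \ref{cl:ucsize} is in hand. The only mild bookkeeping point is to track the exponential rate: the pointwise bound is available for any weight strictly below $\tfrac74\eps$, so we have room to shrink $a$ by a fixed fraction when passing to $l^1$ without destroying the form of the bound. The $\eps^{-1}$ loss between $l^\infty$ and $l^1$ matches the scale $\eps^{-1}$ of the support of $u_c$, consistent with the $l^\infty$ versus $l^1$ sizes already recorded in Claim \ref{cl:ucsize}.
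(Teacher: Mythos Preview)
Your argument is correct and is exactly the standard route to this kind of interaction estimate. The paper itself does not prove Claim~\ref{cl:intsize}; it only states it and refers to \cite[Appendix~A]{Mi2} for the proof. The argument there is the same as yours: promote the weighted $l^\infty$ bounds of Claim~\ref{cl:ucsize} to pointwise exponential decay, multiply, and use $|n-x_1|+|n-x_2|\ge|x_2-x_1|$ for the $l^\infty$ estimate, then pick up the extra $\eps^{-1}$ from summing a geometric tail for the $l^1$ estimate. Your handling of the slight loss in the exponential rate (trading the linear prefactor $|x_2-x_1|$ for a slightly smaller $a'$) is fine; an equivalent and marginally cleaner bookkeeping device is to weight the two factors with two distinct exponents $a_1<a_2$ in the admissible range, so that $e^{-a_1|n-x_1|-a_2|n-x_2|}\le e^{-a_1|x_2-x_1|}e^{-(a_2-a_1)|n-x_2|}$ and the sum over $n$ is $O((a_2-a_1)^{-1})=O(\eps^{-1})$ with no prefactor to absorb.
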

 \begin{claim}
   \label{cl:4}
Let $a_1,\cdots,a_N\in\R$ and
$I=\{\sum_{i=1}^N\theta_ia_i:
 0\le \theta_i\le 1\text{ for $1\le i\le N$}\}$.
Suppose $f\in C^2(\R)$ and $f(0)=0$. Then
$$\left|f(\sum_{1\le i\le N}a_i)-\sum_{1\le i\le N}f(a_i)\right|
\le \sup_{x\in I}|f''(x)|\sum_{i\ne j}|a_ia_j|.$$
 \end{claim}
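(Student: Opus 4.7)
The plan is to reduce the claim to its $N=2$ instance by telescoping, and prove that instance via a double-integral representation. First I would set $h(x,y) := f(x+y) - f(x) - f(y)$ and observe that $h(x,0) = h(0,y) = 0$ (using $f(0) = 0$), together with $\pd_x\pd_y h(x,y) = f''(x+y)$. Two applications of the fundamental theorem of calculus then yield
$$h(x,y) = \int_0^x\!\!\int_0^y f''(s+t)\,dt\,ds,$$
from which $|h(x,y)| \le |xy|\sup_\xi |f''(\xi)|$, where the supremum runs over $\xi$ of the form $\mu x + \nu y$ with $\mu,\nu \in [0,1]$.

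Next I would telescope. Writing $S_k := \sum_{i=1}^k a_i$ with $S_0 = 0$, the identity $f(S_k) - f(S_{k-1}) - f(a_k) = h(S_{k-1},a_k)$ for $k\ge 2$ (the $k=1$ contribution vanishes because $f(0) = 0$) gives
$$f(S_N) - \sum_{i=1}^N f(a_i) = \sum_{k=2}^N h(S_{k-1}, a_k).$$
For each such $k$, any point $\mu S_{k-1} + \nu a_k$ with $\mu,\nu \in [0,1]$ can be written as $\sum_{i=1}^N \theta_i a_i$ by taking $\theta_i = \mu$ for $i<k$, $\theta_k = \nu$, and $\theta_i = 0$ for $i>k$, hence lies in $I$. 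Applying the $N=2$ bound and $|S_{k-1}| \le \sum_{i<k}|a_i|$ then summing in $k$ yields
$$\left|f(S_N) - \sum_{i=1}^N f(a_i)\right| \le \sup_I|f''| \sum_{1 \le i < k \le N}|a_i a_k| = \tfrac{1}{2}\sup_I|f''|\sum_{i\ne j}|a_i a_j|,$$
which is actually a factor of two stronger than the stated inequality.

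I expect no serious obstacle here: the only point deserving care is verifying that the intermediate points $s+t$ produced by the double integral stay inside $I$, and this is immediate from the convex-combination description of $I$.
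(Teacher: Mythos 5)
Your argument is correct: the double-integral identity $f(x+y)-f(x)-f(y)=\int_0^x\int_0^y f''(s+t)\,dt\,ds$ (valid since $f(0)=0$, for either sign of $x,y$) plus the telescoping over $S_k=\sum_{i\le k}a_i$ is exactly the kind of second-order Taylor estimate this claim rests on, and your verification that the intermediate points $\mu S_{k-1}+\nu a_k$ lie in $I$ is the only delicate step and is handled correctly. The paper itself gives no proof here (it defers to the appendix of \cite{Mi2}), so there is nothing to contrast with; note only that your estimate is in fact sharper, giving $\tfrac12\sup_I|f''|\sum_{i\ne j}|a_ia_j|$, which of course implies the stated bound.
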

\begin{claim}
Let $a\in[0,2k_1\eps)$. Then
  \label{cl:rhoc}
\begin{gather*}
\|\pd_x^i\pd_c^j\rho_c\|_{l^2_a\cap l^2_{-a}}
+\|J^i\pd_c^j\rho_c\|_{l^2_a\cap l^2_{-a}}
=O(\eps^{\frac32+i-2j}),\\
\|\pd_x^i\pd_c^j\rho_c\|_{l^2_a\cap l^2_{-a}}
+\|J^i\pd_c^j\rho_c\|_{l^\infty_a\cap l^\infty_{-a}}
=O(\eps^{2+i-2j}).
\end{gather*}
\end{claim}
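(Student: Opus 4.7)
The plan is to reduce Claim \ref{cl:rhoc} to two ingredients: a weighted size estimate for the source $g_c:=H'(u_c)-u_c$, and a mapping estimate for the Fourier multiplier $T_c:=\pd_x(c\pd_x+J)^{-1}$ on the weighted sequence spaces.

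First, I would compute $g_c$. Since $V'(r)=r+\tfrac{1}{12}r^2+O(r^3)$ by \eqref{eq:H1}, we have $g_c={}^t(\tfrac{1}{12}r_c^2+O(r_c^3),\,0)$. Combining Claim \ref{cl:ucsize} with Claim \ref{cl:intsize} at $x_1=x_2$, and using that each $\pd_c$ on $u_c$ costs $\eps^{-2}$ while each $\pd_x$ gains $\eps$, one obtains, for $a\in[0,2k_1\eps)$,
\begin{equation*}
\|\pd_x^i\pd_c^j g_c\|_{l^2_a\cap l^2_{-a}}=O(\eps^{\frac72+i-2j}),\qquad \|\pd_x^i\pd_c^j g_c\|_{l^\infty_a\cap l^\infty_{-a}}=O(\eps^{4+i-2j}).
\end{equation*}

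Next, I would analyze $T_c$ via its Fourier symbol $\widehat{T_c}(\xi)=i\xi\,\mathrm{adj}(ic\xi I+J(\xi))/D(\xi)$, where $D(\xi)=-c^2\xi^2+4\sin^2(\xi/2)=(1-c^2)\xi^2-\xi^4/12+O(\xi^6)$. For $c=1+(k_1\eps)^2/6$ one has $1-c^2=-(k_1\eps)^2/3+O(\eps^4)$, so $D$ has a double zero at $\xi=0$ and no other zero on the real axis (since $c>1$ forces $c|\xi|>2|\sin(\xi/2)|$ for $\xi\ne 0$). The next zero on the imaginary axis sits at $\xi=\pm 2i\kappa(c)$ where $\sinh\kappa/\kappa=c$, giving $\kappa(c_1)=k_1\eps+O(\eps^3)$. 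Hence for any $a\in[0,2k_1\eps)$ the shifted contours $\operatorname{Im}\xi=\pm a$ avoid every zero of $D$ except the origin. Direct inspection shows that on these contours $|\widehat{T_c}(\xi)|$ is maximized near $|\xi|\sim\eps$ where it is $O(\eps^{-2})$, and decays for larger $|\xi|$. Consequently, acting on functions whose Fourier transform is localized on the scale $|\xi|\sim\eps$ (such as $g_c$ and its derivatives), $T_c$ loses exactly one factor $\eps^{-2}$ in the $l^2_{\pm a}$ and $l^\infty_{\pm a}$ norms.

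Combining the two steps gives $\|\rho_c\|_{l^2_a\cap l^2_{-a}}\lesssim\eps^{-2}\cdot\eps^{7/2}=\eps^{3/2}$ and $\|\rho_c\|_{l^\infty_a\cap l^\infty_{-a}}\lesssim\eps^{-2}\cdot\eps^{4}=\eps^{2}$. Derivatives in $x$ commute with $T_c$ and act on $g_c$, contributing $\eps^i$; derivatives in $c$ act on $c$ in $T_c$ via $\pd_c(c\pd_x+J)^{-1}=-(c\pd_x+J)^{-1}\pd_x(c\pd_x+J)^{-1}$ and on $g_c$, contributing $\eps^{-2j}$ in total. The $J$-norm estimates reduce to the $\pd_x$-norm estimates by applying $J$ to the resolvent equation $(c\pd_x+J)\rho_c=\pd_x g_c$, giving $J\rho_c=\pd_x g_c-c\pd_x\rho_c$. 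The main obstacle is tracking the precise low-frequency behavior of $\widehat{T_c}$, where $\widehat{T_c}(\xi)\sim\xi/((k_1\eps)^2\xi^2/3+\xi^4/12)$ has supremum $O(\eps^{-2})$ attained at $|\xi|\sim\eps$; matching this scale against the intrinsic frequency-localization scale $\eps$ of the source is what delivers the claimed $\eps$-exponents, and the restriction $a<2k_1\eps$ is sharp, coinciding with the distance to the nearest imaginary-axis zero of $D$.
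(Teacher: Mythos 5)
There is nothing in the paper to compare against: the appendix claims, including Claim \ref{cl:rhoc}, are quoted without proof ("See [Mi2, Appendix A] for the proof"). Your route --- weighted bounds for $g_c=H'(u_c)-u_c={}^t(\tfrac1{12}r_c^2+O(r_c^3),0)$ combined with bounds for the multiplier $T_c=\pd_x(c\pd_x+J)^{-1}$ on the shifted contours $\operatorname{Im}\xi=\pm a$ inside the analyticity strip determined by the zero $\pm2i\kappa(c)$ of $D(\xi)=-c^2\xi^2+4\sin^2(\xi/2)$ --- is exactly the standard Friesecke--Pego-type resolvent analysis that the cited appendix relies on, and your exponent bookkeeping ($\eps^{-2}$ loss for $T_c$, $\eps^{7/2}$ resp.\ $\eps^4$ for $g_c$, gain $\eps$ per $\pd_x$ or $J$, cost $\eps^{-2}$ per $\pd_c$ via the resolvent identity, and $J^i$ reduced to $\pd_x$-estimates through $J\rho_c=\pd_xg_c-c\pd_x\rho_c$) reproduces the claimed rates.

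Two slips should be repaired. First, in your closing paragraph the model symbol must read $\xi^2/\bigl((k_1\eps)^2\xi^2/3+\xi^4/12\bigr)\sim\bigl((k_1\eps)^2/3+\xi^2/12\bigr)^{-1}$, not $\xi/(\cdots)$: the entries of the adjugate of $ic\xi I+J(\xi)$ vanish like $\xi$ at $\xi=0$, and it is precisely this extra factor that makes the singularity at $\xi=0$ removable and gives $\sup|\widehat{T_c}|=O(\eps^{-2})$; with the numerator $\xi$ as written the supremum would be $O(\eps^{-3})$ and the whole count would fail, contradicting your own (correct) matrix formula two sentences earlier. Second, Claim \ref{cl:intsize} only provides unweighted $l^1$ and $l^\infty$ product bounds, so it does not by itself give $\|\pd_x^i\pd_c^jg_c\|_{l^2_a\cap l^2_{-a}}$; instead use Claim \ref{cl:ucsize} and (P4) together with the observation that $r_c^2$ decays like $e^{-4\kappa(c)|x|}$, twice the rate of $u_c$, so every $a<2k_1\eps$ is admissible for the source term. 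Minor further points: for the $l^\infty_{\pm a}$ conclusion the supremum of the symbol is not quite enough --- you need, e.g., an $L^1$ bound on the shifted Fourier transform of $g_c$ (immediate from its exponential concentration at frequencies $|\xi|\lesssim\eps$) or a kernel estimate; and as $a\uparrow 2k_1\eps$ the constant degrades because the contour approaches the simple zero $2i\kappa(c)$, where $|D'|\sim\eps^3$ --- this endpoint looseness is already present in the statement of the claim itself and is harmless since it is applied with $a$ of order $k_1\eps$.
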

\begin{claim}
  \label{cl:j-1}
Let $a>0$ and $u=(u_1,u_2)\in l^2_{-a}$, $v=(v_1,v_2)\in l^2_a$ and 
$J^{-1}$ be an inverse operator of $J$ defined as \eqref{eq:J-1}.
Then 
$$ \la u, J^{-1}v\ra=
-\la u_1, \sum_{k=1}^\infty e^{k\pd}v_2\ra
-\la \sum_{k=1}^\infty e^{k\pd}u_2, v_1\ra,$$
and as $l\to\infty$,
\begin{gather*}
\la u,J^{-1}e^{l\pd}v\ra
=O(a^{-1}e^{-al}\|u\|_{l^2_{-a}}\|v\|_{l^2_a}),
\\ \la u,J^{-1}e^{-l\pd}v\ra=-\la u_1,1\ra\la v_2,1\ra
-\la u_2,1\ra\la v_1,1\ra+O(a^{-1}e^{-al}\|u\|_{l^2_{-a}}\|v\|_{l^2_a}).
\end{gather*}
Furthermore,  $\la u,J^{-1}u\ra=-\la u_1,1\ra\la u_2,1\ra$
if $u\in l^2_a\cap l^2_{-a}$.
\end{claim}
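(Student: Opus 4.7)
The plan is to prove Claim \ref{cl:j-1} by direct calculation from the matrix representation \eqref{eq:J-1} of $J^{-1}$, using two elementary tools: the shift-adjoint identity $\la f, e^{k\pd}g\ra=\la e^{-k\pd}f,g\ra$, and Fubini-type rearrangement of double sums. Both are justified by pairing the decay of $u\in l^2_{-a}$ against that of $v\in l^2_a$, which, combined with the geometric factor $e^{-ak}$ coming from the weighted norm of one argument, yields summable tails in $k$.

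For the first identity I would pair $u=(u_1,u_2)$ directly with $J^{-1}v$ to obtain
\begin{equation*}
\la u,J^{-1}v\ra=-\la u_1,\sum_{k=1}^\infty e^{k\pd}v_2\ra-\la u_2,\sum_{k=0}^\infty e^{k\pd}v_1\ra,
\end{equation*}
and then recast the second pairing in the claimed form. Fubini and the shift-adjoint give $\la u_2,\sum_{k=0}^\infty e^{k\pd}v_1\ra=\la\sum_{k=0}^\infty e^{-k\pd}u_2,v_1\ra$, after which the telescoping identity $\sum_{k\in\Z}f(n+k)=\la f,1\ra$ (valid for $f\in l^1$) converts sums of negative shifts into sums of positive shifts, up to a constant of the form $\la u_2,1\ra\la v_1,1\ra$; that constant cancels in the scope of the first identity and reappears explicitly in the second and third parts of the claim. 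Absolute convergence of the double sum is guaranteed because $|u_2(n)|e^{-an}\in l^2$ and $|v_1(n+k)|e^{a(n+k)}\in l^2$ combine with a geometric factor in $k$.

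For the asymptotic estimates as $l\to\infty$, I would exploit that the translations $e^{\pm l\pd}$ commute with $J^{-1}$, so $\la u,J^{-1}e^{l\pd}v\ra=\la e^{-l\pd}u,J^{-1}v\ra$ and $\la u,J^{-1}e^{-l\pd}v\ra=\la e^{l\pd}u,J^{-1}v\ra$. In the first (decaying) case the identity $\|e^{-l\pd}u\|_{l^2_{-a}}=e^{-al}\|u\|_{l^2_{-a}}$, combined with a bound on $J^{-1}v$ in the dual weighted norm of order $a^{-1}\|v\|_{l^2_a}$ (the $a^{-1}$ coming from $\sum_{k\ge 0}e^{-ak}=(1-e^{-a})^{-1}\sim a^{-1}$), yields $O(a^{-1}e^{-al})$ by Cauchy--Schwarz. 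In the second case the naive estimate fails because $\|e^{l\pd}u\|_{l^2_{-a}}=e^{al}\|u\|_{l^2_{-a}}$ grows; here I would expand directly, writing $\sum_{k\ge 1}(e^{-l\pd}v_2)(n+k)=\sum_{j\ge n+1-l}v_2(j)=\la v_2,1\ra-\sum_{j\le n-l}v_2(j)$. Pairing the constant part with $u_1$ yields the leading term $-\la u_1,1\ra\la v_2,1\ra$, while the tail is $O(a^{-1}e^{-al})$ by Cauchy--Schwarz against $u_1\in l^2_{-a}$. The symmetric term $-\la u_2,1\ra\la v_1,1\ra$ is obtained by the same argument applied to the other component.

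The final identity $\la u,J^{-1}u\ra=-\la u_1,1\ra\la u_2,1\ra$ for $u\in l^2_a\cap l^2_{-a}$ then follows by substituting $v=u$ in the first identity: the two pairings involving shift-sums coincide by commutativity of the $l^2$-pairing, and the resulting expression simplifies to the claimed constant using $u\in l^1$ together with the telescoping identity. The main technical obstacle throughout is the conditional convergence of $\sum_{k\ge 1}e^{k\pd}$, which is not bounded on $l^2$ by itself; every Fubini swap and every shift-adjoint manipulation must be justified using the combined decay provided by the paired weighted norms, and the factor $a^{-1}$ in the remainders is the expected signature of the borderline geometric summability as $a\downarrow 0$.
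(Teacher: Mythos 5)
The paper itself does not prove Claim \ref{cl:j-1} (it refers to \cite[Appendix A]{Mi2}), so your argument has to stand on its own, and its treatment of the first identity does not. The direct expansion of \eqref{eq:J-1} gives $\la u,J^{-1}v\ra=-\la u_1,\sum_{k\ge1}e^{k\pd}v_2\ra-\la u_2,\sum_{k\ge0}e^{k\pd}v_1\ra=-\la u_1,\sum_{k\ge1}e^{k\pd}v_2\ra-\la\sum_{k\ge0}e^{-k\pd}u_2,v_1\ra$, and that is where a correct proof stops. Your attempt to convert $\sum_{k\ge0}e^{-k\pd}u_2$ into $\sum_{k\ge1}e^{k\pd}u_2$ via $\sum_{k\in\Z}f(n+k)=\la f,1\ra$ is not available: $u_2\in l^2_{-a}$ may grow like $e^{a n}$ as $n\to+\infty$, so $u_2\notin l^1$ and $\sum_{k\ge1}e^{k\pd}u_2$ need not even converge pointwise. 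Worse, even when $u_2\in l^1$, telescoping gives $\la\sum_{k\ge0}e^{-k\pd}u_2,v_1\ra=\la u_2,1\ra\la v_1,1\ra-\la\sum_{k\ge1}e^{k\pd}u_2,v_1\ra$: a sign flip plus a constant with nothing to cancel against, so your assertion that the constant ``cancels in the scope of the first identity'' is unfounded. Indeed the display as printed cannot hold literally: for $u=(0,\delta_0)$, $v=(\delta_0,0)$ the left side equals $-1$ while the printed right side is $0$, and setting $v=u$ in the printed form gives $-2\la u_1,\sum_{k\ge1}e^{k\pd}u_2\ra$, contradicting the final assertion of the Claim. The intended identity is the direct expansion above (the printed second term is a misprint for $\la\sum_{k\ge0}e^{-k\pd}u_2,v_1\ra$), and the last identity follows from it because the two triangular sums $\sum_{p<q}u_1(p)u_2(q)$ and $\sum_{p\ge q}u_1(p)u_2(q)$ are complementary and add up to $\la u_1,1\ra\la u_2,1\ra$ --- not, as you argue, because the two shift-sum pairings coincide.

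Your first asymptotic estimate is essentially right: commute $e^{l\pd}$ with $J^{-1}$, move the shift to $u$ by adjointness, and use $\|J^{-1}\|_{B(l^2_a)}\lesssim a^{-1}$ (a one-line Schur test you should state). But in the second estimate the step ``the tail is $O(a^{-1}e^{-al})$ by Cauchy--Schwarz against $u_1\in l^2_{-a}$'' fails: the tail is $\sum_n u_1(n)\sum_{m\le n-l}v_2(m)$, supported on $n-m\ge l$, where the weights you have at hand produce the factor $e^{a(n-m)}\ge e^{al}$ rather than $e^{-a(n-m)}$; moreover $\la u_1,1\ra$ and $\la v_2,1\ra$ need not exist for general $u\in l^2_{-a}$, $v\in l^2_a$. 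To obtain the stated expansion one needs the two-sided localization present in the applications (e.g.\ $u,v\in l^2_a\cap l^2_{-a}$, as in the last part of the Claim), and the remainder is then naturally controlled by the opposite pair of norms ($\|u\|_{l^2_a}\|v\|_{l^2_{-a}}$). As written, your argument silently assumes summability and points the exponential weights the wrong way exactly where the estimate is delicate.
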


\end{document}